\providecommand{\U}[1]{\protect\rule{.1in}{.1in}}
\newtheorem{proposition}{Proposition}[section]
\newtheorem{theorem}[proposition]{Theorem}
\newtheorem{lemma}[proposition]{Lemma}
\newtheorem{definition}[proposition]{Definition}
\newtheorem{remark}[proposition]{Remark}
\newtheorem{example}[proposition]{Example}
\newtheorem{condition}[proposition]{Condition}
\numberwithin{equation}{section}
\numberwithin{proposition}{section}
\newenvironment{proof}[1][Proof]{\noindent\textbf{#1.} }{\ \rule{0.5em}{0.5em}}
\begin{document}

\title{Local stability of Kolmogorov forward equations for finite state nonlinear
Markov processes}
\author{Amarjit Budhiraja\thanks{Research supported in part by National Science Foundation(DMS-1305120) and the Army Research Office (W911NF-10-1-0158, W911NF- 14-1-0331)}, Paul Dupuis\thanks{Research supported in part by the Army
Research Office (W911NF-12-1-0222).}, Markus Fischer and Kavita Ramanan\thanks{Research supported in part by the Army Research Office (W911NF-12-1-0222)
and the National Science Foundation (NSF CMMI-1234100 and NSF DMS-1407504)}}
\maketitle

\begin{abstract}
The focus of this work is on local stability of a class of nonlinear ordinary
differential equations (ODE) that describe limits of empirical measures
associated with finite-state exchangeable weakly interacting $N$-particle 
systems. Local Lyapunov functions are identified for several classes of such
ODE, including those associated with systems with slow adaptation and Gibbs
systems. Using results from \cite{BuDuFiRa1} and large deviations heuristics,
a partial differential equation (PDE) associated with the nonlinear ODE is
introduced and it is shown that positive definite subsolutions of this PDE
serve as local Lyapunov functions for the ODE. This PDE characterization is
used to construct explicit Lyapunov functions for a broad class of models
called locally Gibbs systems. This class of models is significantly larger
than the family of Gibbs systems and several examples of such systems are
presented, including models with nearest neighbor jumps and models with
simultaneous jumps that arise in applications.

\end{abstract}



\noindent\emph{2010 Mathematics Subject Classification. } Primary: 60K35,
93D30, 34D20; Secondary: 60F10, 60K25. \newline

\noindent\emph{Key Words and Phrases. } Nonlinear Markov processes, weakly
interacting particle systems, interacting Markov chains, mean field limit,
stability, metastability, Lyapunov functions, relative entropy, large deviations.

\section{Introduction}

In this paper we consider local stability properties of the nonlinear ordinary
differential equation (ODE)
\begin{equation}
\frac{d}{dt}p(t)=p(t)\Gamma(p(t)). \label{EqLimitKolmogorov}%
\end{equation}
where $p(t)$ takes values in ${\mathcal P}({\mathcal X})$.
Here $\mathcal{X}$ is a finite set that we denote by ${\mathcal{X}}%
=\{1,\ldots,d\}$, ${\mathcal{P}}({\mathcal{X}})$ is the space of probability
measures on $\mathcal{X}$ equipped with the topology of weak convergence,
which we identify with the unit $(d-1)$-dimensional simplex ${\mathcal{S}%
}=\{r\in\mathbb{R}^{d}:r_{x}\geq0,x\in{\mathcal{X}},\mbox{ and }\sum
_{x\in{\mathcal{X}}}r_{x}=1\}$ and for each $p\in{\mathcal{P}}({\mathcal{X}}%
)$, $\Gamma(p)$ is a rate matrix for a Markov chain on ${\mathcal{X}}$. Such
ODEs describe the evolution of the law of so-called nonlinear Markov or
McKean-Vlasov processes that arise as limits of weakly interacting Markov
chains (see for example Section 2 of the companion paper \cite{BuDuFiRa1}). In
this context, the ODE \eqref{EqLimitKolmogorov} is referred to as the forward
equation of the nonlinear Markov process. The focus of the current paper is
local stability (see Definition \ref{def:locstab}) of the ODE
\eqref{EqLimitKolmogorov}, and therefore of the corresponding nonlinear Markov
process, for several families of models.

As usual in the study of stability of dynamical systems, the basic approach is
to construct a suitable local Lyapunov function (see Definition \ref{loclyap}%
). It is known (see, for example, Section 3 of \cite{BuDuFiRa1}) that for an
ergodic \textit{linear} Markov process on $\mathcal{X}$ (i.e., the case where $\Gamma$ is
constant) the mapping $q\mapsto R(q\Vert\pi)$, where $R$ is relative entropy
and $\pi$ is the unique stationary distribution, defines a Lyapunov function
for the associated linear Kolmogorov equation. Although one does not expect
this property to hold for general nonlinear Markov processes (see Section 3 of
\cite{BuDuFiRa1} for a discussion of this point), in Section
\ref{SectSlowAdaptation} we consider a family of models, which we call systems
with slow adaptation, for which relative entropy is in fact a Lyapunov
function when the adaptation parameter is sufficiently small. This result says
that relative entropy continues to serve as a Lyapunov function for suitably
small non-linear perturbations of linear Markov processes, but it does not
yield Lyapunov functions for general nonlinear Markov processes. For one
particular family of models whose stationary distributions take an explicit form and which we call systems of Gibbs type, Section 4 of
\cite{BuDuFiRa1} proposed a local Lyapunov function defined as the limit of
certain scaled relative entropies that involve the stationary distributions of
the associated $N$-particle weakly interacting Markov processes. In Section
\ref{SectGibbs} of the current work we show that this function is in fact a
local Lyapunov function in the sense of Definition \ref{loclyap} under
suitable positive definiteness assumptions.

For non-Gibbs families, stationary distributions usually will not take an
explicit form and thus a different approach is needed. One such approach was
developed in Section 5 of \cite{BuDuFiRa1}, where  analogous limits of
scaled relative entropies, but with the stationary distributions of the
$N$-particle system replaced by the joint law of the $N$-particles at time
$t$, were identified in terms of the large deviation rate function $J_{t}%
(\cdot)$ for the empirical measure of the state of the weakly interacting
Markov process at time $t$. The limit of $J_{t}$ as $t\rightarrow\infty$ was
proposed in \cite{BuDuFiRa1} as a local Lyapunov function for the ODE
\eqref{EqLimitKolmogorov}, though the question of when these limits exist and
how they can  be evaluated was not tackled. In this work we approach this
question as follows. We begin by formally deriving a nonlinear partial
differential equation (PDE) for $\{J_{t}(q),t\geq0,q\in\mathcal{S}\}$. We next show
that classical sense positive definite subsolutions of the stationary form of
the PDE (see \eqref{eq:statpde}), which is formally the equation governing the limit of $J_t$ as $t\to \infty$, are local Lyapunov functions for
\eqref{EqLimitKolmogorov}. With this result, the problem of constructing
Lyapunov functions reduces to finding suitable subsolutions of
\eqref{eq:statpde}. Although finding explicit subsolutions can be challenging
in general, in Section \ref{sec:examples} we introduce an interesting family
of models, which we call locally Gibbs systems, for which one can in fact give
an explicit solution for \eqref{eq:statpde}. These models contain, as a
special case, the Gibbs type systems studied in Section \ref{SectGibbs}.
Moreover, in Sections \ref{subs-gGibbs} -- \ref{subs-lGibbs2} we present other
examples of locally Gibbs systems, including models with nearest neighbor
jumps and models with simultaneous jumps that arise in telecommunications
applications. Finally we give an example to illustrate that solutions to the
PDE \eqref{eq:statpde} can be found for systems that are not locally Gibbs as well.


The paper is organized as follows. Section \ref{subs-def} collects some
definitions and basic results related to stability of the ODE
\eqref{EqLimitKolmogorov}. In Section \ref{SectSlowAdaptation} we study
systems with slow adaptation. Section \ref{SectGibbs} considers the setting of
systems of Gibbs type. We then study more general models than the Gibbs
systems of Section \ref{SectGibbs}. In Section \ref{subs-timedeppde}, we
present the formal derivation of a nonlinear time-dependent PDE that is
satisfied by the large deviation rate function $\{J_{t}(q)\}$.
The main
result of this section shows that a positive definite subsolution of the
stationary version of this PDE is a local Lyapunov function of
\eqref{EqLimitKolmogorov}. Finally, in Section \ref{sec:examples} we identify
a broad family of models, referred to as locally Gibbs systems, for which a
non-trivial subsolution of \eqref{eq:statpde} can be given explicitly and thus
under suitable additional conditions that ensure positive definiteness, one
can obtain tractable Lyapunov functions for such systems, ensuring local
stability. We also present several examples that illustrate the range of
applicability of these results.

\section{Local Stability and Lyapunov Functions}

\label{subs-def} In this section we will collect some definitions and basic
results related to stability of the dynamical system
\eqref{EqLimitKolmogorov}. The following condition will be assumed on several occasions.
\begin{condition}
	\label{cond:lip}
	The function $p\mapsto
	\Gamma(p)$ is a Lipschitz continuous map from $\mathcal{S}$ to $\mathbb{R}$.
\end{condition}  
Some results,
such as the main result of this section (Proposition \ref{stab-loc}), only need
 that $\Gamma$ be continuous, which is sufficient to ensure the existence
of a solution for any initial condition. Denote by ${\mathcal{S}}^{\circ}$ the
relative interior of ${\mathcal{S}}$:
\[
\mathcal{S}^{\circ}\doteq\{p\in\mathcal{S}:p_{i}>0\mbox{ for all }i=1,\ldots
,d\}.
\]
We first recall the definition of a locally stable fixed point of an ODE.

\begin{definition}
\label{def-fp} A point $\pi^{*} \in{\mathcal{S}}$ is said to be a
\textbf{fixed point} of the ODE (\ref{EqLimitKolmogorov}) if the right-hand
side of (\ref{EqLimitKolmogorov}) evaluated at $p = \pi^{*}$ is equal to zero,
namely,
\[
\pi^{*} \Gamma(\pi^{*}) = 0.
\]

\end{definition}

\begin{definition}
\label{def:locstab} A fixed point $\pi^{*} \in{\mathcal{S}}^{\circ}$ of the
ODE (\ref{EqLimitKolmogorov}) is said to be \textbf{locally stable} if there
exists a relatively open subset $\mathbb{D}$ of ${\mathcal{S}}$ that contains
$\pi^{*}$ and has the property that whenever $p(0) \in\mathbb{D}$, the
solution $p(t)$ of (\ref{EqLimitKolmogorov}) with initial condition $p(0)$
converges to $\pi^{*}$ as $t \to\infty$.
\end{definition}

Our approach to proving local stability will be based on the construction of
suitable Lyapunov functions. In order to state the Lyapunov function property
precisely, we begin with some notation. Let
\[
\mathcal{H}_{1}\doteq\left\{  v\in\mathbb{R}^{d}:\sum_{i=1}^{d}v_{i}%
=1\right\}
\]
be the hyperplane containing the simplex ${\mathcal{S}}$, and let
\[
\mathcal{H}_{0}\doteq\left\{  v\in\mathbb{R}^{d}:\sum_{i=1}^{d}v_{i}%
=0\right\}
\]
be a shifted version of this hyperplane that goes through the origin.

Given a set $\mathbb{D}\subset\mathcal{H}_{1}$, a function $U:\mathbb{D}%
\rightarrow\mathbb{R}$ will be called \emph{differentiable} (respectively
${\mathcal{C}}^{1}$) if it is differentiable (respectively, continuously
differentiable) on some relatively open subset $\mathbb{D}^{\prime}$ of
${\mathcal{H}}_{1}$ such that $\mathbb{D}\subset\mathbb{D}^{\prime}$. In
particular, for a differentiable function $U$ on a relatively open subset
$\mathbb{D}$ of ${\mathcal{H}}_{1}$, for every $r\in\mathbb{D}$, there exists
a unique vector $D^{\mbox{\tiny{tan}}}U(r)\in\mathcal{H}_{0}$, called the
gradient of $U$ at $r$, such that
\[
\lim_{h\in\mathcal{H}_{0},\,\Vert h\Vert\rightarrow0}\frac{U(r+h)-U(r)-\langle
D^{\mbox{\tiny{tan}}}U(r),h\rangle}{\Vert h\Vert}=0.
\]
Note that if $\{h_{i},i=1,\ldots,d-1\}$ is an orthonormal basis of the
subspace $\mathcal{H}_{0}$, we can write
\[
D^{\mbox{\tiny{tan}}}U(r)=\sum_{i=1}^{d-1}\langle D^{\mbox{\tiny{tan}}}%
U(r),h_{i}\rangle h_{i},\;\quad r\in\mathbb{D}.
\]
Finally, we say that the differentiable function $U:\mathbb{D}\rightarrow
\mathbb{R}$ is ${\mathcal{C}}^{1}$ if the mapping $r\mapsto
D^{\mbox{\tiny{tan}}}U(r)$ from $\mathbb{D}$ to $\mathcal{H}_{0}$ is
continuous. Frequently, with an abuse of notation, we write
$D^{{\mbox{\tiny{tan}}}}U$ simply as $DU$.

We introduce the following notion of \emph{positive definiteness.}

\begin{definition}
\label{posdef} Let $\pi^{\ast}\in{\mathcal{S}}^{\circ}$ be a fixed point of
\eqref{EqLimitKolmogorov} and let $\mathbb{D}$ be a relatively open subset of
${\mathcal{S}}$ that contains $\pi^{\ast}$. A function $J:\mathbb{D}%
\rightarrow\mathbb{R}$ is called \textbf{positive definite} if for some
$K^{\ast}\in\mathbb{R}$, the sets $M_{K}=\{r\in\bar{\mathbb{D}}:J(r)\leq K\}$
decrease continuously to $\{\pi^{\ast}\}$ as $K\downarrow K^{\ast}$.
\end{definition}

In Definition \ref{posdef}, by \textquotedblleft decrease continuously to
$\{\pi^{\ast}\}$\textquotedblright\ we mean that: (i) for every $\epsilon>0$,
there exists $K_{\epsilon}\in(K^{\ast},\infty)$ such that $M_{K_{\epsilon}%
}\subset\mathbb{B}_{\epsilon}(\pi^{\ast})\cap\mathbb{D}$, where $\mathbb{B}%
_{\epsilon}(\pi^{\ast})$ is the open Euclidean ball of radius $\epsilon$, centered
at $\pi^{\ast}$, and (ii) for every $K>K^{\ast}$, there exists $\epsilon>0$
such that $\mathbb{B}_{\epsilon}(\pi^{\ast})\cap\mathcal{S}\subset M_{K}$.
Note that if $J$ is a uniformly continuous function on $\mathbb{D}$ which
attains its minimum uniquely at $\pi^{\ast}$ then
$J$ is positive definite. A basic example of such a function is the relative
entropy function $p\mapsto R(p\Vert\pi^{\ast})$ introduced in the next section.

\begin{definition}
\label{loclyap} Let $\pi^{\ast}\in{\mathcal{S}}^{\circ}$ be a fixed point of
\eqref{EqLimitKolmogorov}, and let $\mathbb{D}$ be a relatively open subset of
${\mathcal{S}}$ that contains $\pi^{\ast}$. A positive definite,
${\mathcal{C}}^{1}$ and uniformly continuous function $J:\mathbb{D}%
\rightarrow\mathbb{R}$ is said to be a \textbf{local Lyapunov function}
associated with $(\mathbb{D},\pi^{\ast})$ for the ODE
\eqref{EqLimitKolmogorov} if, given any $p(0)\in\mathbb{D}$, the solution
$p(\cdot)$ to the ODE \eqref{EqLimitKolmogorov} with initial condition $p(0)$
satisfies $\frac{d}{dt}J(p(t))<0$ for all $0\leq t<\tau$ such that
$p(t)\neq\pi^{\ast}$, where $\tau\doteq\inf\{t\geq0:p(t)\in\mathbb{D}^{c}\}$.
In the case $\mathbb{D}=\mathcal{S}^{\circ}$, we refer to $J$ as a
\textbf{Lyapunov function}.
\end{definition}

The following result shows that, as one would expect, existence of a local
Lyapunov function implies local stability. The proof is standard, but is
included for completeness.

\begin{proposition}
\label{stab-loc} Let $\pi^{\ast}\in{\mathcal{S}}^{\circ}$ be a fixed point of
\eqref{EqLimitKolmogorov} and suppose  that Condition \ref{cond:lip}
holds. Suppose there exists a local Lyapunov function associated
with $(\mathbb{D},\pi^{\ast})$ for \eqref{EqLimitKolmogorov} where
$\mathbb{D}$ is some relatively open subset of $\mathcal{S}$ that contains
$\pi^{\ast}$. Then $\pi^{\ast}$ is locally stable.
\end{proposition}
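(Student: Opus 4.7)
The plan is to use the Lyapunov function to trap solutions in a compact forward-invariant neighborhood of $\pi^\ast$ and then apply a LaSalle-type argument to extract convergence. First I would use the positive definiteness of $J$ to fix a well-placed sub-level set. Choose $\epsilon_0>0$ so small that $\mathbb{B}_{\epsilon_0}(\pi^\ast)\cap\mathcal{S}\subset\mathbb{D}$; then part (i) of Definition \ref{posdef} supplies $K>K^\ast$ with $M_K\subset\mathbb{B}_{\epsilon_0}(\pi^\ast)\cap\mathbb{D}$, and part (ii) supplies $\delta>0$ with $\mathbb{B}_\delta(\pi^\ast)\cap\mathcal{S}\subset M_K$. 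The latter ball will be the open neighborhood $\mathbb{D}_0$ witnessing local stability. Since $J$ is uniformly continuous on $\mathbb{D}$, it extends continuously to $\bar{\mathbb{D}}$, so $M_K$ is closed, and compactness of $\mathcal{S}$ then makes $M_K$ a compact subset of $\mathbb{D}$.

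Next I would establish forward invariance of $M_K$. Fix $p(0)\in\mathbb{D}_0$, so $J(p(0))\leq K$. The Lyapunov inequality $\frac{d}{dt}J(p(t))<0$ wherever $p(t)\neq\pi^\ast$ forces $t\mapsto J(p(t))$ to be non-increasing on $[0,\tau)$, so $J(p(t))\leq K$ and hence $p(t)\in M_K$ for every $t<\tau$. Because $M_K$ lies strictly inside the relatively open set $\mathbb{D}$, a trajectory confined to $M_K$ cannot approach $\mathbb{D}^c$; thus $\tau=\infty$ and $p(t)\in M_K$ for all $t\geq 0$.

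The principal step, and the only place Condition \ref{cond:lip} is needed, is to deduce $p(t)\to\pi^\ast$. The plan is a LaSalle-style argument. Since the trajectory is precompact in $M_K$, its $\omega$-limit set $\Omega$ is non-empty, compact, and invariant under the flow defined by \eqref{EqLimitKolmogorov}. The monotone bounded function $J(p(t))$ converges to some $L$, and by continuity $J\equiv L$ on $\Omega$. Suppose for contradiction some $q\in\Omega$ satisfied $q\neq\pi^\ast$. Applied to the solution $\tilde p$ starting at $q$, the strict Lyapunov inequality yields $J(\tilde p(s))<L$ at some $s>0$. Choose $t_n\to\infty$ with $p(t_n)\to q$; the Lipschitz hypothesis on $\Gamma$ (via a Gronwall estimate) gives continuous dependence on the initial condition on the bounded interval $[0,s]$, so $p(t_n+s)\to\tilde p(s)$ and hence $J(p(t_n+s))<L$ for all large $n$, contradicting $J\circ p\geq L$. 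Therefore $\Omega=\{\pi^\ast\}$, and compactness of $M_K$ forces $p(t)\to\pi^\ast$. This establishes local stability with open neighborhood $\mathbb{D}_0=\mathbb{B}_\delta(\pi^\ast)\cap\mathcal{S}$. The only delicate point is the LaSalle step, and here the strict (rather than merely non-positive) sign in the Lyapunov inequality, together with continuous dependence on initial data, does exactly what is needed.
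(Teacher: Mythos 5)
Your proof is correct, but it follows a genuinely different route from the paper's. You use a LaSalle-type argument: after establishing forward invariance of the compact sublevel set $M_K$, you pass to the $\omega$-limit set $\Omega$ of the trajectory, note that $J$ is constant on $\Omega$, and then use continuous dependence on initial conditions (via Gronwall and the Lipschitz hypothesis) to rule out any $q\in\Omega$ with $q\neq\pi^\ast$. The paper instead works directly with the sublevel sets: having fixed $L$ with $\pi^\ast\in\mathbb{D}_0\subset M_L\subset\mathbb{D}$ and shown $\tau=\infty$, it takes a decreasing sequence $K_n\downarrow K^\ast$ and proves that the trajectory enters each $M_{K_n}$ in finite time. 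The key observation there is that the orbital derivative $q\mapsto\langle DJ(q),q\Gamma(q)\rangle$ is continuous and strictly negative on the compact annular region $(\mathbb{B}_{\epsilon_0})^c\cap M_L$, hence bounded away from zero, forcing a uniform rate of decrease for $J(p(\cdot))$ and thus a finite hitting time of $M_{K_n}$. Once inside, forward invariance traps the trajectory, and since $M_{K_n}$ shrinks to $\{\pi^\ast\}$, convergence follows. The paper's argument is somewhat more elementary: it avoids $\omega$-limit-set machinery and continuous dependence on initial data, needing only continuity of the orbital derivative (indeed the paper remarks that mere continuity of $\Gamma$ suffices for this proposition). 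Your approach is equally rigorous and perhaps more recognizable as a LaSalle invariance principle, but it uses Lipschitz continuity more heavily — both for uniqueness of the solution $\tilde p$ started from a limit point and for the Gronwall estimate underlying continuous dependence.

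One small presentational point: when you argue that the strict Lyapunov inequality applied to $\tilde p$ started at $q\in\Omega$ gives $J(\tilde p(s))<L$ for some $s>0$, you should note explicitly (as you do implicitly) that $\tilde p$ remains in $\mathbb{D}$ on $[0,s]$ because $q\in M_K$ and the forward-invariance argument applies equally to $\tilde p$. Otherwise the Lyapunov descent inequality is not guaranteed to hold along $\tilde p$.
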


\begin{proof}
Let $J$ be a local Lyapunov function associated with $(\mathbb{D}, \pi^{*})$
for \eqref{EqLimitKolmogorov}. Since $J$ is positive definite, there exists
$K^{*} \in\mathbb{R}$ such that the sets $M_{K}=\{r\in\bar{\mathbb{D}%
}:J(r)\leq K\}$ decrease continuously to $\{\pi^{\ast}\}$ as $K\downarrow
K^{\ast}$. In particular, there exists $L \in(K^{*}, \infty)$ and a relatively
open subset $\mathbb{D}_{0}$ of $\mathcal{S}$ such that $\pi^{*} \in\mathbb{D}_{0}
\subset M_{L} \subset\mathbb{D}$.

We will prove that \eqref{EqLimitKolmogorov} is locally stable on
$\mathbb{D}_{0}$, namely
\begin{equation}
\mbox{ whenever }p(0)\in\mathbb{D}_{0}%
,\mbox{ the solution }p(t)\mbox{ of 	\eqref{EqLimitKolmogorov} converges to }\pi
^{\ast}\mbox{ as }t\rightarrow\infty. \label{eq:1715}%
\end{equation}
Note that \eqref{eq:1715} is clearly true if $p(0)=\pi^{\ast}$. Suppose now
that $p(0)\neq\pi^{\ast}$. If $p(t)\in\mathbb{D}$ then
\[
\frac{d}{dt}J(p(t))=\langle DJ(p(t)),p(t)\Gamma(p(t))\rangle.
\]
Let $\tau\doteq\inf\{t\geq0:p(t)\in\mathbb{D}^{c}\}$, and assume that
$\tau<\infty$. Since $q\mapsto\langle DJ(q),q\Gamma(q)\rangle$ is a continuous
function on $\mathbb{D}$ and $\frac{d}{dt}J(p(t))<0$ whenever $p(t)\in
\mathbb{D}\setminus\{\pi^{\ast}\}$, we have  $\frac{d}{dt}J(p(t))\leq0$
for all $t\in\lbrack0,\tau)$. Combining this with the fact that $J$ extends
continuously to $\bar{\mathbb{D}}$ we have  $J(p(\tau))\leq L$ and
consequently $p(\tau)\in M_{L}\subset\mathbb{D}$. This contradicts the
assumption $\tau<\infty$. Hence $\tau=\infty$ and
\begin{equation}
\frac{d}{dt}J(p(t))<0\mbox{ for all }t\geq0\mbox{ whenever }p(t)\neq\pi^{\ast
}. \label{eq:negalways}%
\end{equation}

Let $K_{n}\in(K^{\ast},L)$ be a strictly decreasing sequence such that
$K_{n}\downarrow K^{\ast}$ as $n\rightarrow\infty$. Let
\[
\tau_{n}=\inf\{t\geq0:p(t)\in M_{K_{n}}\}.
\]
Note that if $\tau_{n}<\infty$, then $p(t)\in M_{K_{n}}$ for all $t\geq
\tau_{n}$. Since the sets $M_{K_{n}}$ decrease continuously to $\{\pi^{\ast}\}$, it
suffices to show that $\tau_{n}<\infty$ for every $n$.

Consider $n=1$. If $p(0)\in M_{K_{1}}$, $\tau_{1}<\infty$ is immediate.
Suppose now that $p(0)\not \in M_{K_{1}}$. Let $\varepsilon_{0}>0$ be such
that $\mathbb{B}_{\varepsilon_{0}}(\pi^{\ast})\cap\mathcal{S}\subset M_{K_{1}%
}\subset M_{L}$. From \eqref{eq:negalways}, for every $q\in(\mathbb{B}%
_{\varepsilon_{0}})^{c}\cap M_{L}$, $\frac{d}{dt}J(p^{q}(t))|_{t=0}<0$ where
$p^{q}(t)$ is the solution of \eqref{EqLimitKolmogorov} with $p(0)=q$.
Recalling the continuity of $q\mapsto\langle DJ(q),q\Gamma(q)\rangle$ and
observing that $(\mathbb{B}_{\varepsilon_{0}})^{c}\cap M_{L}$ is a closed
subset of $\mathbb{D}$ we have that
\[
\sup_{q\in(\mathbb{B}_{\varepsilon_{0}})^{c}\cap M_{L}}\langle DJ(q),q\Gamma
(q)\rangle<0.
\]
Also, since $\mathbb{B}_{\varepsilon_{0}}(\pi^{\ast})\cap\mathcal{S}\subset
M_{K_{1}}$, for all $t<\tau_{1}$, $p(t)\in(\mathbb{B}_{\varepsilon_{0}}%
)^{c}\cap M_{L}$. Thus we have that $\sup_{t<\tau_{1}}\frac{d}{dt}J(p(t))<0$.
This shows that $\tau_{1}<\infty$. By repeating this argument we see that
$\tau_{n}<\infty$ for every $n$, and the result follows.
\end{proof}

\section{Systems with Slow Adaptation}

\label{SectSlowAdaptation}

Here we consider the case where the ODE \eqref{EqLimitKolmogorov} exhibits a
structure we call \emph{slow adaptation}, for which the strength of the
nonlinear component is adjusted through a small parameter. The long-time
behavior of systems of this type, in the context of nonlinear diffusions
arising as limits of weakly interacting It{\^{o}} diffusions, is studied in
\cite{veretennikov06} based on coupling arguments and hitting times (and not
in terms of Lyapunov functions).

Suppose that Condition \ref{cond:lip} holds and $\pi^{\ast}\in\mathcal{P} (\mathcal{X})$ is a fixed point of the
ODE \eqref{EqLimitKolmogorov}. The rate matrix $\Gamma^{\lambda}(p)=$
$\Gamma(\lambda(p-\pi^{\ast})+\pi^{\ast})$ corresponds to a version of the
original system but with slow adaptation\ when $\lambda>0$ is small. With
$\lambda\in(0,1]$ fixed, the rate matrices $\Gamma^{\lambda}(p)$,
$p\in\mathcal{P}(\mathcal{X})$, determine a family of nonlinear Markov
processes. The corresponding forward equation
\begin{equation}
\frac{d}{dt}p^{\lambda}(t)=p^{\lambda}(t)\Gamma^{\lambda}(p^{\lambda}(t))
\label{EqSlowKolmogorov}%
\end{equation}
has a unique solution given any initial distribution $p(0)\in\mathcal{P}%
(\mathcal{X})$. Note that for any $\lambda\in\lbrack0,1]$, $\pi^{\ast}$ is
also a fixed point for \eqref{EqSlowKolmogorov}. We are interested in the
question of when the fixed point $\pi^{\ast}$ is locally stable for
sufficiently slow adaptation.

Recall that given $p,\pi^{\ast}\in{\mathcal{P}}({\mathcal{X}})$, the relative
entropy of $p$ with respect to $\pi^{\ast}$ is given by
\begin{equation}
R\left(  p\Vert\pi^{\ast}\right)  \doteq\sum_{x\in{\mathcal{X}}}p_{x}%
\log\left(  \displaystyle\frac{p_{x}}{\pi_{x}^{\ast}}\right)  .
\label{def-Relent}%
\end{equation}
It is known (see, e.g., \cite[pp. I-16-17]{Spi71} or \cite[Lemma
3.1]{BuDuFiRa1}) that the mapping
\begin{equation}
\bar{F}(p)=R\left(  p\Vert\pi^{\ast}\right)  , \label{ExSlowLyapunov}%
\end{equation}
serves as a Lyapunov function for finite-state linear Markov processes. The
forward equation of a finite-state linear Markov process has the form
\eqref{EqLimitKolmogorov}, but with a constant rate matrix $\Gamma$, and the
proof of the Lyapunov function property of relative entropy for such Markov
processes crucially uses the fact that $\Gamma$ is constant. In contrast,
since in general the rate matrix in the ODE \eqref{EqLimitKolmogorov} depends
on the state, one does not expect $R(\cdot\Vert\pi^{\ast})$ to serve as a
Lyapunov function for general finite-state nonlinear Markov processes.
Nevertheless, in this section we will show that for systems with slow
adaptation with $\lambda$ sufficiently small, the function $R(\cdot\Vert
\pi^{\ast})$ does in fact have the desired property.
The following is the main result of the section. Note that the function
$\bar{F}$ in \eqref{ExSlowLyapunov} is positive definite (in the sense of
Definition \ref{posdef}). Thus the Proposition below, together with Definition
\ref{loclyap}, says that $\bar{F}$ is a Lyapunov function associated with
$\pi^{\ast}$ for the ODE \eqref{EqSlowKolmogorov}.

\begin{proposition}
\label{PropSlow} Suppose Condition \ref{cond:lip} holds. Let $p^{\lambda}(\cdot)$ be defined by
(\ref{EqSlowKolmogorov}) and $\bar{F}$ by (\ref{ExSlowLyapunov}). Suppose that
$\Gamma(\pi^{\ast})$ is irreducible. Then there is $\lambda_{0}>0$ such that if $\lambda\in
\lbrack0,\lambda_{0}]$, then for all $t\geq0$%
\[
\frac{d}{dt}\bar{F}(p^{\lambda}(t))\leq0,
\]
with a strict inequality if and only if $p^{\lambda}(t)\neq\pi^{\ast}$.
\end{proposition}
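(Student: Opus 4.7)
The plan is to differentiate $\bar{F}(p^{\lambda}(t))$ along solutions of \eqref{EqSlowKolmogorov}, identify the result at $\lambda=0$ with the classical entropy dissipation rate for the linear Markov chain with generator $\Gamma(\pi^{\ast})$, and then control the $O(\lambda)$ nonlinear perturbation.

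First I would apply the chain rule. Since $\bar{F}(p)=\sum_{x}p_{x}\log(p_{x}/\pi^{\ast}_{x})$ and $\sum_{y}\dot{p}_{y}=0$ along \eqref{EqSlowKolmogorov}, the constant-term contribution cancels; using the row-sum-zero identity $\sum_{y}\Gamma^{\lambda}_{xy}(p)=0$ to subtract $\log(p_{x}/\pi^{\ast}_{x})$ inside the inner sum yields
$$\frac{d}{dt}\bar{F}(p^{\lambda}(t)) = \Phi^{\lambda}(p^{\lambda}(t)), \qquad \Phi^{\lambda}(p) := \sum_{x\neq y} p_{x}\,\Gamma^{\lambda}_{xy}(p)\,\log\!\Big(\frac{p_{y}\pi^{\ast}_{x}}{p_{x}\pi^{\ast}_{y}}\Big).$$
At $\lambda=0$ the rate matrix $\Gamma^{0}(p)\equiv A:=\Gamma(\pi^{\ast})$ is constant, irreducible, and satisfies $\pi^{\ast}A=0$ (since $\pi^{\ast}$ is a fixed point of \eqref{EqLimitKolmogorov}). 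The classical entropy dissipation argument---apply $\log t\leq t-1$ with $t=p_{y}\pi^{\ast}_{x}/(p_{x}\pi^{\ast}_{y})$ to each logarithm and then use $\pi^{\ast}A=0$ together with conservativity to see the resulting linear upper bound equals zero---gives $\Phi^{0}(p)\leq 0$, with equality forcing $p_{y}\pi^{\ast}_{x}/(p_{x}\pi^{\ast}_{y})=1$ on every active edge of $A$; irreducibility then propagates this to $p=\pi^{\ast}$. Tracking the defect $h(t):=t-1-\log t\geq 0$ refines this to $\Phi^{0}(p)\leq -\Psi(p)$ for a continuous nonnegative $\Psi$ on $\mathcal{S}^{\circ}$ vanishing only at $\pi^{\ast}$, with $\Psi(p)\asymp\|p-\pi^{\ast}\|^{2}$ near $\pi^{\ast}$ (the implicit constant controlled by the spectral gap of the $\pi^{\ast}$-symmetrization of $A$) and with the same log-type blow-up at the boundary of $\mathcal{S}$ as $-\Phi^{0}$.

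Second, I would handle the perturbation. Condition \ref{cond:lip} gives $|\Gamma^{\lambda}_{xy}(p)-A_{xy}|\leq L\lambda\|p-\pi^{\ast}\|$, whence
$$\Phi^{\lambda}(p)-\Phi^{0}(p) = \sum_{x\neq y} p_{x}\bigl[\Gamma^{\lambda}_{xy}(p)-A_{xy}\bigr]\log\!\Big(\frac{p_{y}\pi^{\ast}_{x}}{p_{x}\pi^{\ast}_{y}}\Big).$$
The crucial observation is that the log-factors here are precisely those appearing in $\Psi$, so the same boundary singularities are present on both sides. An edge-by-edge comparison---directly for edges with $A_{xy}>0$, and for edges with $A_{xy}=0$ via telescoping $\log(p_{y}/p_{x})$ along a path in the graph of $A$ (which exists by irreducibility)---yields a global bound $|\Phi^{\lambda}(p)-\Phi^{0}(p)|\leq K\lambda\,\Psi(p)$ on $\mathcal{S}^{\circ}$. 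Combining gives $\Phi^{\lambda}(p)\leq -(1-K\lambda)\Psi(p)$, so choosing $\lambda_{0}:=1/(2K)$ delivers $\Phi^{\lambda}(p)<0$ for every $p\in\mathcal{S}^{\circ}\setminus\{\pi^{\ast}\}$ and every $\lambda\in[0,\lambda_{0}]$. Since $\Gamma^{\lambda}$ is a valid rate matrix and $A$ is irreducible, any solution $p^{\lambda}(t)$ lies in $\mathcal{S}^{\circ}$ for $t>0$, so the derivative identity is justified for all $t\geq 0$.

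The main obstacle is the global, boundary-aware perturbation estimate. Because $\log(p_{y}\pi^{\ast}_{x}/(p_{x}\pi^{\ast}_{y}))$ is unbounded as $p$ approaches the boundary of $\mathcal{S}$, the perturbation cannot be controlled by $\lambda$ times a bounded quantity: one must match the structural singularity of the perturbation to that of $\Psi$. The path-telescoping step that uses irreducibility of $A$ to dominate contributions from edges absent in the graph of $A$ by defects along active edges is the delicate technical point.
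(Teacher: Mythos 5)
Your overall plan coincides with the paper's: split the dissipation rate $\Phi^\lambda$ into the linear-chain part $\Phi^0$ (generator $A=\Gamma(\pi^*)$) plus an $O(\lambda)$ remainder, establish a uniform bound $\Phi^0(p)\le -c\|p-\pi^*\|^2$ via the negative-definite Hessian of $r\mapsto-\sum_{x\ne y}\pi^*_x\bigl(r_x/\pi^*_x-r_y/\pi^*_y\bigr)^2A_{xy}$ together with compactness of $\mathcal{S}$, and then absorb the remainder by matching its boundary singularity against that of $-\Phi^0$. The paper's explicit ``Case~1/Case~2'' split of the perturbation terms (according to whether $z\ge 1/2$, or whether the sign of $\Gamma^\lambda_{xy}(p)-A_{xy}$ is favorable) is exactly the implementation of the aggregate estimate you call $|\Phi^\lambda-\Phi^0|\le K\lambda\Psi$, followed by the choice $\lambda_0\asymp 1/K$.

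The one place your argument does not close---and you flag it yourself as the delicate point---is the path-telescoping treatment of edges with $A_{xy}=0$. Telescoping $\log\bigl(p_y\pi^*_x/(p_x\pi^*_y)\bigr)$ along a path $x=x_0,\ldots,x_n=y$ in the graph of $A$ produces terms carrying the prefactor $p_x$, while the quantities that $\Psi=-\Phi^0$ actually controls near $\partial\mathcal{S}$ carry the prefactors $p_{x_i}$ of the intermediate states; there is no way to compare $p_x$ with $p_{x_i}$ as $p\to\partial\mathcal{S}$, so the intended domination is not established by that route. Fortunately, no telescoping is needed: for an edge with $A_{xy}=0$ the perturbation $\Gamma^\lambda_{xy}(p)-A_{xy}=\Gamma^\lambda_{xy}(p)\ge 0$ is automatically nonnegative, so when $\log z_{xy}<0$ the corresponding term in $\Phi^\lambda-\Phi^0$ is nonpositive and harmless, and when $\log z_{xy}\ge 0$ the elementary bound $\log z\le z-1$ together with $p_x\,|z_{xy}-1|\le \pi^*_x\,|p_y-\pi^*_y|/\pi^*_y+|p_x-\pi^*_x|$ already gives an $O(\lambda\|p-\pi^*\|^2)$ estimate. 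This sign dichotomy is precisely what the paper exploits: its Case~2, the only place the $|\log z|$ singularity of $-\Phi^0$ is genuinely used for absorption, has $\Gamma^\lambda_{xy}(p)-A_{xy}<0$, which forces $A_{xy}>0$ and hence $A_{xy}\ge\gamma_{\min}>0$.
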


\vspace{0pt} \noindent\textit{Proof.} By construction and hypothesis, there
exists $C\in(1,\infty)$ such that for all $x,y\in\mathcal{X}$, all $\lambda
>0$, and all $p\in\mathcal{P}(\mathcal{X})$,
\[
\bigl|\Gamma_{yx}^{\lambda}(p)-\Gamma_{yx}(\pi^{\ast})\bigr|\leq\lambda C\Vert
p-\pi^{\ast}\Vert,
\]
where $\Vert p-\pi^{\ast}\Vert\doteq\sum_{x\in\mathcal{X}}|p_{x}-\pi_{x}%
^{\ast}|$. Recall that since $\pi^{\ast}$ is stationary $\pi^{\ast}%
\Gamma^{\lambda}(\pi^{\ast})=0$. Using the definition (\ref{def-Relent}) of
relative entropy, the ODE (\ref{EqSlowKolmogorov}), and the relation
$\sum_{x,y\in{\mathcal{X}}}p_{y}\Gamma_{yx}^{\lambda}(p)=\sum_{x,y\in
{\mathcal{X}}}p_{y}\Gamma_{yx}(\pi^{\ast})=0$,
\begin{align*}
\frac{d}{dt}R\bigl(p^{\lambda}(t)\Vert\pi^{\ast}\bigr) &  =\sum_{x,y\in
\mathcal{X}}p_{y}^{\lambda}(t)\left(  \log\left(  \frac{p_{x}^{\lambda}%
(t)}{\pi_{x}^{\ast}}\right)  +1\right)  \Gamma_{yx}^{\lambda}(p^{\lambda
}(t))\\
&  =\sum_{x,y\in\mathcal{X}:x\neq y}p_{y}^{\lambda}(t)\left(  \log\left(
\frac{p_{x}^{\lambda}(t)\pi_{y}^{\ast}}{p_{y}^{\lambda}(t)\pi_{x}^{\ast}%
}\right)  -\frac{p_{x}^{\lambda}(t)\pi_{y}^{\ast}}{p_{y}^{\lambda}(t)\pi
_{x}^{\ast}}+1\right)  \Gamma_{yx}(\pi^{\ast})\\
&  \qquad+\sum_{x,y\in\mathcal{X}}p_{y}^{\lambda}(t)\log\left(  \frac
{p_{x}^{\lambda}(t)}{\pi_{x}^{\ast}}\right)  \left(  \Gamma_{yx}^{\lambda
}(p^{\lambda}(t))-\Gamma_{yx}(\pi^{\ast})\right)  \\
&  =\sum_{x,y\in\mathcal{X}:x\neq y}p_{y}^{\lambda}(t)\left(  \log\left(
\frac{p_{x}^{\lambda}(t)\pi_{y}^{\ast}}{p_{y}^{\lambda}(t)\pi_{x}^{\ast}%
}\right)  -\frac{p_{x}^{\lambda}(t)\pi_{y}^{\ast}}{p_{y}^{\lambda}(t)\pi
_{x}^{\ast}}+1\right)  \Gamma_{yx}(\pi^{\ast})\\
&  \qquad+\!\sum_{x,y\in\mathcal{X}:x\neq y}p_{y}^{\lambda}(t)\log\left(
\frac{p_{x}^{\lambda}(t)\pi_{y}^{\ast}}{p_{y}^{\lambda}(t)\pi_{x}^{\ast}%
}\right)  \left(  \Gamma_{yx}^{\lambda}(p^{\lambda}(t))-\Gamma_{yx}(\pi^{\ast
})\right)  ,
\end{align*}
where we use the convention that $0\log0=0$. For $x,y\in\mathcal{X}$ with
$x\neq y$ and $p\in{\mathcal{P}}({\mathcal{X}})$, set
\begin{align*}
\gamma_{yx}(p) &  \doteq p_{y}\left(  \log\left(  \frac{p_{x}\pi_{y}^{\ast}%
}{p_{y}\pi_{x}^{\ast}}\right)  -\frac{p_{x}\pi_{y}^{\ast}}{p_{y}\pi_{x}^{\ast
}}+1\right)  \Gamma_{yx}(\pi^{\ast}),\\
\rho_{yx}^{\lambda}(p) &  \doteq p_{y}\log\left(  \frac{p_{x}\pi_{y}^{\ast}%
}{p_{y}\pi_{x}^{\ast}}\right)  \left(  \Gamma_{yx}^{\lambda}(p)-\Gamma
_{yx}(\pi^{\ast})\right)  .
\end{align*}
To complete the proof we will show that there is $\lambda_{0}>0$ such that for
every $p\in{\mathcal{P}}({\mathcal{X}})$,
\begin{equation}
\sum_{x,y\in\mathcal{X}:x\neq y}\left(  \gamma_{yx}(p)+\rho_{yx}^{\lambda
}(p)\right)  \leq0\quad\text{for all }\lambda\in\lbrack0,\lambda
_{0}],\label{EqProofSlow}%
\end{equation}
with equality if and only if $p=\pi^{\ast}$.

It is straightforward to check
that $p\mapsto\gamma_{yx}(p)$ is concave. However we will need more than that, namely a uniform estimate on its second derivative. 
Let $r\in\mathcal{H}_{0}$ with
$\left\Vert r\right\Vert =1$. Evaluation of the derivatives gives
$$\left. \frac{d}{ds}\sum_{x,y\in\mathcal{X}:x\neq y}\gamma_{yx}%
(\pi^{\ast}+sr)\right\vert _{s=0}= 0,$$
\begin{equation}
\left.  \frac{d^{2}}{ds^{2}}\sum_{x,y\in\mathcal{X}:x\neq y}\gamma_{yx}%
(\pi^{\ast}+sr)\right\vert _{s=0}=-\sum_{x,y\in\mathcal{X}:x\neq y}\pi
_{y}^{\ast}\left(  \frac{r_{y}}{\pi_{y}^{\ast}}-\frac{r_{x}}{\pi_{x}^{\ast}%
}\right)  ^{2}\Gamma_{yx}(\pi^{\ast}).\label{eqn:2ndderiv}%
\end{equation}
If the expression in (\ref{eqn:2ndderiv}) is zero then, since $\pi_{y}^{\ast
}>0$ for all $y$ and all states communicate,%
\[
\frac{r_{y}}{\pi_{y}^{\ast}}=\frac{r_{x}}{\pi_{x}^{\ast}}%
\]
for all $x\neq y$. However this is impossible, since $r\in\mathcal{H}_{0}$
requires that at least one component be of the opposite sign of some other
component. Hence the expression in (\ref{eqn:2ndderiv}) is negative. Using
that $\{r:\left\Vert r\right\Vert =1\}$ is compact and continuity in $r$ show
that (\ref{eqn:2ndderiv}) is in fact bounded above away from zero on this set,
which shows the matrix of second derivatives is negative definite. Using the fact 
 that
$\mathcal{P}(\mathcal{X})$ is compact, we find that there is $c>0$, not
depending on $p$, such that
\[
\sum_{x,y\in\mathcal{X}:x\neq y}\gamma_{yx}(p)\leq-c\Vert p-\pi^{\ast}%
\Vert^{2},
\]
which is equivalent to
\begin{equation}
\sum_{x,y\in\mathcal{X}:x\neq y}\gamma_{yx}(p)\leq-\frac{c}{2}\Vert
p-\pi^{\ast}\Vert^{2}+\frac{1}{2}\sum_{x,y\in\mathcal{X}:x\neq y}\gamma
_{yx}(p).\label{EqProofSlowBound}%
\end{equation}

Set $\gamma_{\min}\doteq\min\{\Gamma_{yx}(\pi^{\ast}):\Gamma_{yx}(\pi^{\ast
})>0\}>0$ and note that $\max_{x\in\mathcal{X}}\frac{1}{\pi_{x}^{\ast}}%
<\infty$ since $\pi_{\min}^{\ast}\doteq\min_{x\in\mathcal{X}}\pi_{x}^{\ast}%
>0$. Let $x,y\in\mathcal{X}$, $x\neq y$, and set $z\doteq\frac{p_{x}\pi
_{y}^{\ast}}{p_{y}\pi_{x}^{\ast}}$. We distinguish two cases.

\emph{Case 1: }$z\geq\frac{1}{2}$\emph{ or }$\Gamma_{yx}^{\lambda}%
(p)-\Gamma_{yx}(\pi^{\ast})\geq0$\emph{.} Suppose first that $p_{y}\neq0$ and
$z\geq1/2$. Since $|\log s|\leq2|s-1|$ for all $s\geq\frac{1}{2}$,
\begin{align*}
\rho_{yx}^{\lambda}(p)  &  =p_{y}\log z\bigl(\Gamma_{yx}^{\lambda}%
(p)-\Gamma_{yx}(\pi^{\ast})\bigr)\\
&  \leq2p_{y}\left\vert \frac{p_{x}\pi_{y}^{\ast}}{p_{y}\pi_{x}^{\ast}%
}-1\right\vert \bigl|\Gamma_{yx}^{\lambda}(p)-\Gamma_{yx}(\pi^{\ast})\bigr|\\
&  =\frac{2}{\pi_{x}^{\ast}}|p_{x}\pi_{y}^{\ast}-p_{y}\pi_{x}^{\ast
}|\bigl|\Gamma_{yx}^{\lambda}(p)-\Gamma_{yx}(\pi^{\ast})\bigr|\\
&  \leq\frac{2}{\pi_{\min}^{\ast}}\left(  \pi_{y}^{\ast}|p_{x}-\pi_{x}^{\ast
}|+\pi_{x}^{\ast}|\pi_{y}^{\ast}-p_{y}|\right)  C\lambda\Vert p-\pi^{\ast
}\Vert.
\end{align*}
This inequality is trivially true if $p_{y}=0$ or if $z<1/2$ and $\Gamma
_{yx}^{\lambda}(p)-\Gamma_{yx}(\pi^{\ast})\geq0$, and thus is always valid for
Case 1.

\emph{Case 2: }$\Gamma_{yx}^{\lambda}(p)-\Gamma_{yx}(\pi^{\ast})<0$\emph{ and
}$z\in\lbrack0,\frac{1}{2})$\emph{.} Since $\log s -s+1\leq0$ for all $s\geq
0$,
\begin{align*}
\tfrac{1}{2}\gamma_{yx}(p)+\rho_{yx}^{\lambda}(p)  &  =p_{y}\left(  \tfrac
{1}{2}\left(  \log z-z+1\right)  \Gamma_{yx}(\pi^{\ast})+\log z\bigl(\Gamma
_{yx}^{\lambda}(p)-\Gamma_{yx}(\pi^{\ast})\bigr)\right) \\
&  \leq p_{y}\left(  \tfrac{1}{2}\left(  \log z-z+1\right)  \gamma_{\min
}+|\log z|2C\lambda\right) \\
&  \leq\tfrac{1}{2}p_{y}\left(  -|\log z|\left(  \gamma_{\min}-4C\lambda
\right)  +(1-z)\gamma_{\min}\right)  .
\end{align*}
This quantity is non-positive for $z\in\lbrack0,\frac{1}{2})$ whenever
$\lambda\leq\lambda_{1}\doteq\frac{\gamma_{\min}}{16C}\wedge1$. Recalling
inequality \eqref{EqProofSlowBound}, we have for $\lambda\in\lbrack
0,\lambda_{1}]$ that
\begin{align*}
&  \sum_{x,y:x\neq y}\left(  \gamma_{yx}(p)+\rho_{yx}^{\lambda}(p)\right) \\
&  \quad\leq-\frac{c}{2}\Vert p-\pi^{\ast}\Vert^{2}+\frac{2C\lambda}{\pi
_{\min}^{\ast}}\Vert p-\pi^{\ast}\Vert\sum_{x,y\in\mathcal{X}:x\neq y}\left(
\pi_{y}^{\ast}|p_{x}-\pi_{x}^{\ast}|+\pi_{x}^{\ast}|\pi_{y}^{\ast}%
-p_{y}|\right) \\
&  \quad\leq-\frac{c}{2}\Vert p-\pi^{\ast}\Vert^{2}+\frac{2C\lambda}{\pi
_{\min}^{\ast}}\Vert p-\pi^{\ast}\Vert\left(  \sum_{x\in\mathcal{X}}|p_{x}%
-\pi_{x}^{\ast}|+\sum_{y\in\mathcal{X}}|\pi_{y}^{\ast}-p_{y}|\right) \\
&  \quad\leq-\frac{c}{2}\Vert p-\pi^{\ast}\Vert^{2}+\frac{4C\lambda}{\pi
_{\min}^{\ast}}\Vert p-\pi^{\ast}\Vert^{2}.
\end{align*}
This last quantity is strictly negative if $\lambda<\lambda_{2}\doteq
\min\{\lambda_{1},c\frac{\pi_{\min}^{\ast}}{8C}\}$ and $p\neq\pi^{\ast}$, and
zero for $p=\pi^{\ast}$. Choosing $\lambda_{0}\in(0,\lambda_{2})$, we find
that \eqref{EqProofSlow} holds, with equality if and only if $p=\pi^{\ast}$.
\rule{0.5em}{0.5em}

The bound on $\lambda$ obtained in the proof is obviously conservative, and
better bounds that depend on $\Gamma(\pi^{\ast})$ and $\pi^{\ast}$ can be found.

\section{Systems of Gibbs Type}

\label{SectGibbs}

In this section we revisit the class of Gibbs models introduced in Section 4
of \cite{BuDuFiRa1}. We begin by recalling the basic definitions. Let
$K:\mathcal{X}\times\mathbb{R}^{d}\rightarrow\mathbb{R}$ be such that for each
$x\in\mathcal{X}$, $K(x,\cdot)$ is a continuously differentiable function on
$\mathbb{R}^{d}$. We sometimes write $K(x,p)$ as $K^{x}(p)$. One special case
we discuss in detail is given by%
\begin{equation}
K(x,p)=V(x)+\beta\sum_{y\in\mathcal{X}}W(x,y)p_{y},\;(x,p)\in\mathcal{X}%
\times\mathbb{R}^{d} \label{eq:affinegibbs}%
\end{equation}
where $V:\mathcal{X}\rightarrow\mathbb{R}$, $W:\mathcal{X}\times
\mathcal{X}\rightarrow\mathbb{R}$ and $\beta>0$.

Let $(\alpha(x,y))_{x,y\in\mathcal{X}}$ be an irreducible and symmetric matrix
with diagonal entries equal to zero and off-diagonal entries either one or
zero. Define $H:\mathcal{X}\times\mathbb{R}^{d}\rightarrow\mathbb{R}$ by
\begin{align}
H(x,p)\doteq H^{x}(p)&=K^{x}(p)+\sum_{z\in\mathcal{X}}\left(
\frac{\partial}{\partial p_{x}}K^{z}(p)\right)  p_{z} \nonumber\\
&= \frac{\partial}{\partial p_{x}}\left(\sum_{z\in\mathcal{X}}K^{z}(p)p_{z}  \right)
\label{def-H}
\end{align}
and $\Psi:\mathcal{X}\times\mathcal{X}\times\mathbb{R}^{d}\rightarrow
\mathbb{R}$ by%
\[
\Psi(x,y,p)\doteq H^{y}(p)-H^{x}(p),\;(x,y,p)\in\mathcal{X}\times
\mathcal{X}\times\mathbb{R}^{d}.
\]
Let
\begin{equation}
\Gamma_{x,y}(p)\doteq e^{-\left(  \Psi(x,y,p)\right)  ^{+}}\alpha(x,y),\;
x\neq y,\;p\in\mathcal{P}(\mathcal{X}), \label{eq:eqlimgen}%
\end{equation}
where recall that we identify $\mathcal{P}(\mathcal{X})$ with the simplex
$\mathcal{S}$.
Then for $p\in\mathcal{S}$, $\Gamma(p)$ is the generator of an ergodic
finite-state Markov process, and the unique invariant distribution on
$\mathcal{X}$ is given by $\pi(p)$ with
\begin{equation}
\pi(p)_{x}\doteq\frac{1}{Z(p)}\exp\left(  -H^{x}(p)\right)  ,
\label{ExLimitStationary}%
\end{equation}
where%
\[
Z(p)\doteq\sum_{x\in\mathcal{X}}\exp\left(  -H^{x}(p)\right)  .
\]

By studying the asymptotics of certain scaled relative entropies, the
following candidate Lyapunov function was identified in Theorem 4.2 of
\cite{BuDuFiRa1}:
\begin{equation}
F(p)=\sum_{x\in\mathcal{X}}(K^{x}(p)+\log p_{x})p_{x}\label{eqn:miss1}%
\end{equation}
for $p\in{\mathcal{S}}$. We note that in \cite{BuDuFiRa1} $K(x,\cdot)$ was
taken to be twice continuously differentiable (this property was used in the
proof of Lemma 4.1 of \cite{BuDuFiRa1}), however here we merely assume that
$K(x,\cdot)$ is $\mathcal{C}^{1}$. Also note that in the special case of
\eqref{eq:affinegibbs},
\begin{equation}
F(p)=\sum_{x\in\mathcal{X}}\left(  V(x)+\beta\sum_{y\in\mathcal{X}}%
W(x,y)p_{y}+\log p_{x}\right)  p_{x}.\label{eqn:miss2}%
\end{equation}
Since $\sum_{x\in\mathcal{X}}p_{x}\log p_{x}$ is the negative of the entropy
of $p$, $F$ is the sum of a convex function, an affine function and a
quadratic function on $\mathcal{P}(\mathcal{X})$. This fact is useful in
determining whether or not the fixed points of \eqref{EqLimitKolmogorov} are stable.

Recall the set $\mathcal{H}_{0}$ introduced in Section \ref{subs-def}
and note that for $p\in\mathcal{S}^{\circ}=\{p\in\mathcal{S}:p_{x}%
>0\mbox{ for all }x=1,\ldots,d\}$ the directional derivative of the function $F$  in \eqref{eqn:miss1} in any
direction $v\in\mathcal{H}_{0}$ is given by
\begin{equation}
\frac{\partial}{\partial v}F(p)\doteq\langle DF(p),v\rangle=\sum
_{x\in\mathcal{X}}v_{x}\left(  \log p_{x}+K^{x}(p)+\sum_{y\in\mathcal{X}%
}\frac{\partial}{\partial p_{x}}K^{y}(p)\right), \label{eq:dirdef}%
\end{equation}
where we have used that $\sum_{x \in \mathcal{X}} v_x = 0$.
The following result shows that the fixed points of \eqref{EqLimitKolmogorov}
can be characterized as critical points of $F$.

\begin{theorem}
\label{ThLyapunovGradient} Let $\Gamma$ be as defined in \eqref{eq:eqlimgen}
and $p\in\mathcal{P}(\mathcal{X})$. Then $p$ is a fixed point for
\eqref{EqLimitKolmogorov} if and only if $p\in\mathcal{S}^{\circ}$ and
$\frac{\partial}{\partial v}F(p)=0$ for all $v\in\mathcal{H}_{0}$.
\end{theorem}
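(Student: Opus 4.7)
The strategy is to show that both the fixed-point condition $p\Gamma(p)=0$ and the gradient-vanishing condition $\langle DF(p),v\rangle=0$ for every $v\in\mathcal{H}_0$ are equivalent to a common intermediate statement: that $p\in\mathcal{S}^\circ$ and $p$ coincides with the Gibbs measure $\pi(p)$ from \eqref{ExLimitStationary}.

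First I would analyze the gradient side. Computing $DF$ by the product rule on $F(p)=\sum_x K^x(p)p_x+\sum_x p_x\log p_x$ and absorbing the ``$+1$'' arising from $\tfrac{\partial}{\partial p_x}(p_x\log p_x)$ using $\sum_x v_x=0$, together with the identity \eqref{def-H} giving $H^x(p)=K^x(p)+\sum_z\tfrac{\partial K^z}{\partial p_x}(p)\,p_z$, one obtains for $p\in\mathcal{S}^\circ$ and $v\in\mathcal{H}_0$,
\[
\langle DF(p),v\rangle\;=\;\sum_{x\in\mathcal{X}} v_x\bigl(\log p_x+H^x(p)\bigr).
\]
Since $\mathcal{H}_0$ is the orthogonal complement of $(1,\ldots,1)$ in $\mathbb{R}^d$, this inner product vanishes for every $v\in\mathcal{H}_0$ precisely when the vector $(\log p_x+H^x(p))_{x\in\mathcal{X}}$ is constant in $x$, i.e.\ $p_x=e^{c-H^x(p)}$ for some $c\in\mathbb{R}$. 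Requiring $\sum_x p_x=1$ forces $c=\log Z(p)$, so the gradient condition is equivalent to $p=\pi(p)$; and since $\pi(p)\in\mathcal{S}^\circ$ automatically, the positivity requirement is built in. Note that $p\in\mathcal{S}^\circ$ is essential for $\log p_x$ to be defined.

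Next I would handle the fixed-point side. The ($\Leftarrow$) direction is immediate from the fact, stated after \eqref{eq:eqlimgen}, that $\pi(p)$ is the invariant distribution of $\Gamma(p)$, so $\pi(p)\Gamma(p)=0$. For ($\Rightarrow$), I would observe that for every $p\in\mathcal{S}$ the off-diagonal entries $\Gamma_{x,y}(p)=e^{-(\Psi(x,y,p))^+}\alpha(x,y)$ share the sparsity pattern of the irreducible matrix $\alpha$, because the exponential prefactor is strictly positive ($H^x$ is finite since $K(x,\cdot)$ is $\mathcal{C}^1$ on $\mathbb{R}^d$). Hence $\Gamma(p)$ is the generator of an irreducible continuous-time Markov chain on $\mathcal{X}$, whose left null space is one-dimensional and spanned by the unique invariant distribution $\pi(p)\in\mathcal{S}^\circ$. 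Any $p\in\mathcal{P}(\mathcal{X})$ solving $p\Gamma(p)=0$ must therefore be a scalar multiple of $\pi(p)$, and the common normalization $\sum_x p_x=\sum_x\pi(p)_x=1$ pins down $p=\pi(p)\in\mathcal{S}^\circ$, completing the equivalence.

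I do not anticipate a serious obstacle; the proof is essentially bookkeeping combined with uniqueness of the invariant distribution of an irreducible chain. The subtlest step is the algebraic identification in \eqref{eq:dirdef}, namely recognizing that the product-rule term $\sum_z\tfrac{\partial K^z}{\partial p_x}(p)\,p_z$ is exactly what promotes $K^x(p)$ to $H^x(p)$ in the bracketed expression, while the constant $+1$ from the entropy derivative drops out against $\sum_x v_x=0$. Once this identification is in place, both sides of the claimed equivalence collapse to the single relation $p=\pi(p)$ and the theorem follows.
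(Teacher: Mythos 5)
Your argument is correct and takes essentially the same route as the paper: both reduce the fixed-point condition and the critical-point condition to the single relation $p=\pi(p)$, using ergodicity of $\Gamma(p)$ for the former and the identity $\langle DF(p),v\rangle=\sum_x v_x(\log p_x+H^x(p))$ for the latter. The only cosmetic difference is that you characterize vanishing of the gradient via orthogonality to $\mathbf 1$, while the paper specializes to the basis vectors $v^{x,y}=e_x-e_y$ and reads off the log-ratio identity $\log(p_x/p_y)=\log(\pi(p)_x/\pi(p)_y)$.
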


\begin{proof}
Recall that $\pi(p)$ is the unique invariant probability associated with
$\Gamma(p)$, and hence $\pi(p)\Gamma(p)=0$. Also note that $p$ is a fixed
point for \eqref{EqLimitKolmogorov} if and only if $p\Gamma(p)=0$, which,
since $\Gamma(p)$ is a rate matrix of an ergodic Markov process, can be true
if and only if $p=\pi(p)$. Since $\pi(p)\in\mathcal{S}^{\circ}$ for every
$p\in\mathcal{S}$ we have that any fixed point of \eqref{EqLimitKolmogorov} is
in $\mathcal{S}^{\circ}$. For $x,y\in\mathcal{X}$, $x\neq y$, let $v
^{x,y}\doteq e_{x}-e_{y}$, where $e_{x}$ is the unit vector in direction $x$.
Then by (\ref{eq:dirdef}), (\ref{def-H}) and (\ref{ExLimitStationary}), for
any $p\in\mathcal{S}^{\circ}$%
\begin{align}
\frac{\partial}{\partial v^{x,y}}F(p) &  =\log p_{x}-\log p_{y}+K^{x}(p)-K^{y}(p)\nonumber\\
&  \quad+\sum_{z\in\mathcal{X}}\left(  \frac{\partial}{\partial p_{x}}%
K^{z}(p)-\frac{\partial}{\partial p_{y}}K^{z}(p)\right)  p_{z}\nonumber\\
&  =\log p_{x}-\log p_{y} + \left(  H^{x}(p)-H^{y}(p)\right)  \nonumber\\
&  =\log\left(  \frac{p_{x}}{p_{y}}\right)  -\log\left(  \frac{\pi(p)_{x}}%
{\pi(p)_{y}}\right)  .\label{EqProofGradient}%
\end{align}
If $p$ is a fixed point of \eqref{EqLimitKolmogorov} then $p=\pi(p)$, and so
$\frac{\partial}{\partial v^{x,y}}F(p)=0$ for all $x,y\in\mathcal{X}$, $x\neq
y$. From this it follows that $\frac{\partial}{\partial v}F(p)=0$ for all
$v\in\mathcal{H}_{0}$.

Conversely, suppose $p\in\mathcal{S}^{\circ}$ and $\frac{\partial}{\partial
v}F(p)=0$ for all $v\in\mathcal{H}_{0}$. Then from
\eqref{EqProofGradient}
\[
\frac{p_{x}}{p_{y}}=\frac{\pi(p)_{x}}{\pi(p)_{y}}\mbox{ for all }x,y\in
\mathcal{X}.
\]
Thus $p=\pi(p)$, which says that $p$ is a fixed point of \eqref{EqLimitKolmogorov}.
\end{proof}

\medskip According to Theorem~\ref{ThLyapunovGradient}, the equilibrium points
of the forward equation \eqref{EqLimitKolmogorov} are precisely the critical
points of $F$ on $\mathcal{P}(\mathcal{X})$. Note that although for each $p$,
$\Gamma(p)$ is a rate matrix of a Markov process with a unique invariant
measure, the dynamical system \eqref{EqLimitKolmogorov} can have multiple
stable and unstable equilibria. Here is an example.

\begin{example}
\label{ExmplMultEquilibria}\emph{\ Assume that }$\mathcal{X}=\{1,2\}$\emph{,
and} $K$ \emph{is given as in \eqref{eq:affinegibbs}} with $V\equiv
0$, $W(1,1)=0=W(2,2)$, \emph{and} $W(1,2)=1=W(2,1)$\emph{. Then }$F(p)=f(p_{1})$\emph{\ with }%
\[
f(x)\doteq x\log x+(1-x)\log(1-x)+2\beta(1-x)x,\quad x\in\lbrack0,1].
\]
\emph{The critical points of }$F$\emph{\ on }$\mathcal{P}(\{1,2\})$\emph{\ are
in a one-to-one correspondence with the critical points of }$f$\emph{\ on
}$[0,1]$\emph{. We have }$f(0)=0=f(1)$\emph{, and for }$x\in(0,1)$\emph{ }%
\[
f^{\prime}(x)=\log x -\log(1-x)+2\beta-4\beta x,f^{\prime\prime}(x)=\frac
{1}{x}+\frac{1}{1-x}-4\beta.
\]
\emph{Moreover, }$f^{\prime}(x)\rightarrow-\infty$\emph{\ as }$x$\emph{\ tends
to zero, and }$f^{\prime}(x)\rightarrow\infty$\emph{\ as }$x$\emph{\ tends to
one.}

\emph{If }$\beta\leq1$\emph{\ then }$f$\emph{\ has exactly one critical point,
namely a global minimum at }$x=\frac{1}{2}$\emph{. If }$\beta>1$\emph{\ then
there are three critical points, one local maximum at }$x=\frac{1}{2}%
$\emph{\ and two minima at }$x_{\beta}$\emph{\ and }$1-x_{\beta}$\emph{,
respectively, for some }$x_{\beta}\in(0,\frac{1}{2})$\emph{, where }$x_{\beta
}\rightarrow\frac{1}{2}$\emph{\ as }$\beta\downarrow1$\emph{, }$x_{\beta
}\rightarrow0$\emph{\ as }$\beta$\emph{\ goes to infinity. The two minima of
}$f$\emph{\ correspond to stable equilibria of the forward equation, while the
local maximum corresponds to an unstable equilibrium.}
\end{example}

\subsection{Lyapunov function property}

\label{SectGibbsLyapunov}Suppose that the function $F$ defined in
(\ref{eqn:miss1}) is positive definite (in the sense of Definition
\ref{posdef}) in a neighborhood of a fixed point $\pi^{\ast}$ of
\eqref{EqLimitKolmogorov} which contains no other fixed point of
\eqref{EqLimitKolmogorov}. In this section we show that $F$ is a local
Lyapunov function for the ODE \eqref{EqLimitKolmogorov} (associated with the
neighborhood and the fixed point $\pi^{\ast}$), with $\Gamma$ defined by
\eqref{eq:eqlimgen}. This result is an immediate consequence of the theorem
below and Definition \ref{loclyap}. Together with Proposition \ref{stab-loc}
this will imply $\pi^{\ast}$ is locally stable.

\begin{theorem}
\label{ThLyapunovProp} Let $p(\cdot)$ be a solution to the forward equation
\eqref{EqLimitKolmogorov} with $\Gamma$ as defined in (\ref{eq:eqlimgen}) and
some initial distribution $p(0)\in\mathcal{P}(\mathcal{X})$. Then for all
$t\geq0$,
\begin{equation}
\frac{d}{dt}F(p(t))=\left.  \frac{d}{dt}R\left(  p(t)\Vert\pi(q)\right)
\right\vert _{q=p(t)}\leq0. \label{EqDescent}%
\end{equation}
Moreover, $\frac{d}{dt}F(p(t))=0$ if and only if $p(t)=\pi(p(t))$.
\end{theorem}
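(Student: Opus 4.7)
The plan is to show the chain of equalities and the inequality in \eqref{EqDescent} by reducing the nonlinear problem to the classical Lyapunov estimate for linear Markov chains applied with the ``frozen'' rate matrix $\Gamma(p(t))$ and its invariant distribution $\pi(p(t))$.

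First I would compute the tangent gradient $DF$. From \eqref{eq:dirdef}, the bracketed factor is $\log p_x + K^x(p) + \sum_y \partial_{p_x}K^y(p) = \log p_x + H^x(p)$ by \eqref{def-H}. Using \eqref{ExLimitStationary} to substitute $H^x(p) = -\log\pi(p)_x - \log Z(p)$, and remembering that $\sum_x v_x = 0$ kills the $\log Z(p)$ term, I obtain the clean identity
\[
\langle DF(p),v\rangle = \sum_{x\in\mathcal{X}} v_x \log\frac{p_x}{\pi(p)_x},\qquad v\in\mathcal{H}_0.
\]
Applying this with $v = p(t)\Gamma(p(t))\in\mathcal{H}_0$ yields
\[
\frac{d}{dt}F(p(t)) = \sum_{x\in\mathcal{X}} (p(t)\Gamma(p(t)))_x \log\frac{p(t)_x}{\pi(p(t))_x}.
\]

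Next I would verify the first equality in \eqref{EqDescent}. Setting $\phi(t,q) \doteq R(p(t)\Vert\pi(q))$ and differentiating at fixed $q$, I get $\partial_t\phi(t,q) = \sum_x \dot p(t)_x(\log p(t)_x - \log\pi(q)_x)$, where the derivative of the $+1$ term vanishes because $\sum_x\dot p(t)_x=0$. Substituting $q=p(t)$ gives exactly the display above, which identifies $\frac{d}{dt}F(p(t))$ with $\left.\frac{d}{dt}R(p(t)\Vert\pi(q))\right|_{q=p(t)}$.

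The inequality then follows from the linear Markov chain Lyapunov calculation recalled in Section~3 of \cite{BuDuFiRa1}. Freezing $q = p(t)$, the rate matrix $\Gamma(p(t))$ has invariant distribution $\pi(p(t))$ and so, by the classical computation (the same manipulation that appeared in the slow-adaptation proof, giving the sum $\sum_{x\neq y} p_y\Gamma_{yx}(\pi)(\log s - s + 1)$ with $s = p_x\pi_y/(p_y\pi_x)$ and $\log s - s + 1\le 0$), the derivative is non-positive.

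For the equality case, I would use that $\Gamma(p(t))$ is irreducible, which holds because $\alpha$ is irreducible and the Gibbs form \eqref{eq:eqlimgen} gives $\Gamma_{xy}(p) > 0$ whenever $\alpha(x,y) = 1$. Hence $(\log s - s + 1) = 0$ along all communicating pairs $(x,y)$ forces $p(t)_x\pi(p(t))_y = p(t)_y\pi(p(t))_x$ for all $x,y$, i.e.\ $p(t) = \pi(p(t))$. The main obstacle, though a mild one, is the careful bookkeeping in the first step to show that both expressions for $\frac{d}{dt}F(p(t))$ truly reduce to the single relative-entropy-like sum; once that identification is made, the monotonicity is immediate from the linear theory.
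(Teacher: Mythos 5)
Your proposal is essentially correct and follows the same route as the paper: reduce the orbital derivative of $F$ to the orbital derivative of $R(\cdot\Vert\pi(q))$ with $q$ frozen at $p(t)$, and then invoke the linear-chain relative-entropy dissipation calculation. Your version packages the reduction a bit more cleanly by first deriving the single identity $\langle DF(p),v\rangle=\sum_x v_x\log\bigl(p_x/\pi(p)_x\bigr)$ for $v\in\mathcal{H}_0$, whereas the paper expands both orbital derivatives and compares the resulting sums term by term (and then appeals to the semigroup property to pass from $t=0$ to general $t$, a step your direct chain-rule computation bypasses). One small bookkeeping caution: as written, your intermediate step ``$\log p_x+K^x(p)+\sum_y\partial_{p_x}K^y(p)=\log p_x+H^x(p)$'' is not a literal identity, since $H^x(p)=K^x(p)+\sum_z p_z\,\partial_{p_x}K^z(p)$ carries the weights $p_z$. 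The displayed formula (\ref{eq:dirdef}) in the source appears to drop the $p_y$ factor, and a direct differentiation of (\ref{eqn:miss1}) shows the bracketed factor should be $\log p_x+K^x(p)+\sum_y p_y\,\partial_{p_x}K^y(p)=\log p_x+H^x(p)$; with that correction your derivation of $\langle DF(p),v\rangle=\sum_x v_x\log\bigl(p_x/\pi(p)_x\bigr)$ is sound, and the rest of your argument, including the irreducibility remark for the equality case, is correct.
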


\begin{proof}
We will show that if $p(\cdot)$ is the solution to \eqref{EqLimitKolmogorov}
with $p(0)=q$ then
\begin{equation}
\left.  \frac{d}{dt}F(p(t))\right\vert _{t=0}=\left.  \frac{d}{dt}R\left(
p(t)\Vert\pi(q)\right)  \right\vert _{t=0}. \label{EqProofLyapunov}%
\end{equation}
In view of the semigroup property of solutions to the \mbox{ODE}
\eqref{EqLimitKolmogorov}, and since $q$ is arbitrary, the validity of
\eqref{EqProofLyapunov} implies the first equality in \eqref{EqDescent}.

Let $p(0)=q$. By the definition (\ref{eqn:miss1}) of $F$ and since $\sum
_{x\in\mathcal{X}}\frac{dp_{x}}{dt}(0)=0$,
\[
\left.  \frac{d}{dt}F(p(t))\right\vert _{t=0}=\sum_{x\in\mathcal{X}}\log q_{x}\frac{dp_{x}}{dt}(0)+\sum_{x\in\mathcal{X}}K^{x}(q)\frac{dp_{x}}%
{dt}(0)+\sum_{x,y\in\mathcal{X}}q_{x}\frac{\partial}{\partial p_{y}}%
K^{x}(q)\frac{dp_{y}}{dt}(0).
\]
On the other hand, by the definition of relative entropy,
(\ref{ExLimitStationary}) and (\ref{def-H}), and again using the relation
$\sum_{x\in\mathcal{X}}\frac{dp_{x}}{dt}(0)=0$, we have
\begin{align}
\left.  \frac{d}{dt}R\left(  p(t)\Vert\pi(q)\right)  \right\vert _{t=0} &
=\left.  \frac{d}{dt}\left(  \sum_{x\in{\mathcal{X}}}p_{x}(t)\log
p_{x}(t)\right)  \right\vert _{t=0}-\sum_{x\in{\mathcal{X}}}\frac{dp_{x}}%
{dt}(0)\log  \pi_{x}(q) \label{line1}\\
&  =\sum_{x\in\mathcal{X}}\log q_{x}\frac{dp_{x}}{dt}(0)+\sum_{x\in
\mathcal{X}}K^{x}(q)\frac{dp_{x}}{dt}(0)\nonumber\\
&  \quad+\sum_{x,z\in\mathcal{X}}q_{z}\frac{\partial}{\partial p_{x}}%
K^{z}(q)\frac{dp_{x}}{dt}(0).\nonumber
\end{align}
Comparing the right sides of the last two displays we see that
\eqref{EqProofLyapunov} holds.

The rest of the assertion follows from the observation that $\pi(q)$ is the
stationary distribution for the (linear) Markov family associated with
$\Gamma(q)$ and from the Lyapunov property of relative entropy in the case of
ergodic (linear) Markov processes; see Lemma 3.1 in \cite{BuDuFiRa1}.
\end{proof}


\begin{remark}
\emph{Consider the slow adaptation setting of Section~\ref{SectSlowAdaptation}
for the Gibbs model with $K$ as in \eqref{eq:affinegibbs}. Thus we start from
a family of rate matrices }$\Gamma(p)$\emph{,} $p\in\mathcal{P}(\mathcal{X}%
)$\emph{, defined according to \eqref{eq:eqlimgen}. Suppose that }$\pi^{\ast}%
$\emph{\ is a fixed point of the mapping }$p\mapsto\pi(p).$\emph{\ For
}$\lambda\in\lbrack0,1]$\emph{,} $p\in\mathcal{P}(\mathcal{X})$\emph{, set
}$\Gamma^{\lambda}(p)\doteq\Gamma(\pi^{\ast}+\lambda(p-\pi^{\ast}))$\emph{.
The rate matrices }$\Gamma^{\lambda}(p)$\emph{\ are again of Gibbs type, that
is, }$\Gamma^{\lambda}(p)$\emph{\ satisfies \eqref{eq:eqlimgen}, but with
$\Psi$ replaced by $\Psi^{\lambda}$, where $\Psi^{\lambda}$ is defined exactly
as $\Psi$ is with $K$ as in \eqref{eq:affinegibbs}, but with different
potentials in place of }$V$\emph{ and }$W$\emph{. In particular, the
potentials }$V^{\lambda,\beta}$\emph{, }$W^{\lambda}$\emph{\ are given by }%
\[
V^{\lambda,\beta}(x)\doteq V(x)+2\beta(1-\lambda)\sum_{z\in\mathcal{X}%
}W(x,z)\pi_{z}^{\ast},\quad W^{\lambda}(x,y)\doteq\lambda W(x,y).
\]
\emph{Fix }$\lambda\geq0$\emph{. Then (\ref{eqn:miss2}) and
Theorem~\ref{ThLyapunovProp} imply that if the function }%
\begin{align*}
F^{\lambda}(p) &  \doteq\sum_{x\in\mathcal{X}}p_{x}\log p_{x}+\sum
_{x\in\mathcal{X}}V^{\lambda,\beta}(x)p_{x}+\beta\sum_{x,y\in\mathcal{X}%
}W^{\lambda}(x,y)p_{x}p_{y}\\
&  =\sum_{x\in\mathcal{X}}p_{x}\log p_{x}+\sum_{x\in\mathcal{X}}\left(
V(x)+2\beta(1-\lambda)\sum_{z\in\mathcal{X}}W(x,z)\pi_{z}^{\ast}\right)
p_{x}\\
&  \quad\mbox{}+\lambda\beta\sum_{x,y\in\mathcal{X}}W(x,y)p_{x}p_{y},
\end{align*}
\emph{is positive definite in some neighborhood of $\pi^{\ast}$, then it is a local Lyapunov function for
\eqref{EqSlowKolmogorov} (associated with that neighborhood and the fixed point $\pi^{\ast}$). } \emph{Proposition \ref{PropSlow}, on the other
hand, implies that }$\bar{F}(p)\doteq R\left(  p\Vert\pi^{\ast}\right)  $
\emph{is also a local Lyapunov function when} $\lambda$\emph{ is positive but
sufficiently small. By the definition of relative entropy,
\eqref{ExLimitStationary}, \eqref{def-H} and \eqref{eq:affinegibbs},}
\begin{align*}
\bar{F}(p) &  =R\left(  p\Vert\pi^{\ast}\right)  \\
&  =\sum_{x\in\mathcal{X}}p_{x}\log p_{x}-\sum_{x\in\mathcal{X}}p_{x}\log
\pi_{x}^{\ast}\\
&  =\sum_{x\in\mathcal{X}}p_{x}\log p_{x}+\log Z(\pi^{\ast})+\sum
_{x\in\mathcal{X}}\left(  V(x)+2\beta\sum_{z\in\mathcal{X}}W(x,z)\pi_{z}%
^{\ast}\right)  p_{x},
\end{align*}
\emph{which is equal to }$F^{\lambda}(p)+\log Z(\pi^{\ast})$\emph{\ for
}$\lambda=0$\emph{. Observe that the term }$\log Z(\pi^{\ast})$\emph{\ has no
impact on the Lyapunov function property as it does not depend on }$p$\emph{.}
\emph{Thus, the function $F^{\lambda}$ includes \textquotedblleft correction
terms\textquotedblright\ (that vanish when $\lambda=0$) and serves as a
Lyapunov function(when positive definite) not just for small $\lambda$ but rather for all $\lambda
\in(0,1]$.}
\end{remark}




\subsection{Comparison with existing results for It{\^{o}} diffusions}

\label{SectGibbsComparison}

A situation analogous to that of this section is considered in \cite{tamura87}%
, where the author studies the long-time behavior of \textquotedblleft
nonlinear\textquotedblright\ It{\^{o}}-McKean diffusions of the form%
\begin{equation}
dX(t)=-\left(  \nabla V\bigl(X(t)\bigr)+2\beta\int_{\mathbb{R}^{d}}\nabla
_{1}W\bigl(X(t),y)\bigr)\mu_{t}(dy)\right)  dt+\sqrt{2}dB(t),
\label{EqLimitSDE}%
\end{equation}
where $\mu_{t}$ is the probability law of $X(t)$, $B$ is a standard
$d$-dimensional Wiener process, $V$ a function $\mathbb{R}^{d}\mapsto
\mathbb{R}$, the environment potential, and $W$ a \emph{symmetric} function
$\mathbb{R}^{d}\times\mathbb{R}^{d}\mapsto\mathbb{R}$ with zero diagonal, the
interaction potential. Here $\nabla_{1}$ denotes gradient with respect to the
first $\mathbb{R}^{d}$-valued variable. Signs and constants have been chosen
in analogy with the finite-state models considered here. Solutions of
\eqref{EqLimitSDE} arise as weak limits of the empirical measure processes
associated with weakly interacting It{\^{o}} diffusions. The $N$-particle
model is described by the system
\[
dX^{i,N}(t)=-\nabla V\bigl(X^{i,N}(t)\bigr)dt-\frac{2\beta}{N}\sum_{j=1}%
^{N}\nabla_{1}W\bigl(X^{i,N}(t),X^{j,N}(t)\bigr)dt+\sqrt{2}dB^{i}(t),
\]
where $i\in\{1,\ldots,N\}$, $B^{1},\ldots,B^{N}$ are independent standard
Brownian motions.

In \cite{tamura87} a candidate Lyapunov function $F\!:\mathcal{P}%
(\mathbb{R}^{d})\rightarrow\lbrack0,\infty]$, referred to as the
\textquotedblleft free energy function\textquotedblright, is introduced
without explicit motivation, and then shown to be in fact a valid Lyapunov
function. The same function is also considered in \cite{CaMcVi} and plays a
key role in their study of convergence properties of $\mu_{t}$ as
$t\rightarrow\infty$. The function takes the following form. If $\mu$ is a
probability measure that is absolutely continuous with respect to Lebesgue
measure and of the form $f_{\mu}(x)dx$, then
\begin{equation}
F(\mu)\doteq\int\log   f_{\mu}(x)  f_{\mu}(x)dx+\int
V(x)\mu(dx)+\int\int W(x,y)\mu(dx)\mu(dy).\label{ExTamuraLyapunov}%
\end{equation}
In all other cases $F(\mu)\doteq\infty$. This function is clearly a close
analogue of the function in (\ref{eqn:miss1}), which was derived as the limit
of scaled relative entropies. There are, however, some interesting differences
in the presentation and proof of the needed properties. The most significant
of these is how one represents the derivative of the composition of the
Lyapunov function with the solution to the forward equation. In
\cite{tamura87} the descent property is established by expressing the orbital
derivatives of $F$ in terms of the Donsker-Varadhan rate function associated
with the empirical measures of solutions to \eqref{EqLimitSDE}, when the
measure $\mu_{t}$ is frozen at $\mu\in\mathcal{P}(\mathbb{R}^{d})$. In
contrast, in our case the orbital derivative of the Lyapunov function is
expressed as the orbital derivative of relative entropy with respect to the
invariant distribution $\pi(p)$ that is obtained when the dynamics of the
nonlinear Markov process are frozen at $p$. The latter expression also applies to
the diffusion case in the sense that for all $t\geq0$,
\begin{equation}
\frac{d}{dt}F(\mu_{t})=\frac{d}{dt}R\left(  \mu_{t}\Vert\pi_{\nu}\right)
_{\nu=\mu_{t}},\label{EqTamuraREDerivative}%
\end{equation}
where $\mu_{t}$ is the law of $X(t)$, $X$ being the solution to
\eqref{EqLimitSDE} for some (absolutely continuous) initial condition, and
$\pi_{\nu}\in\mathcal{P}(\mathbb{R}^{d})$ is given by
\begin{equation}
\pi_{\nu}(dx)\doteq\frac{1}{Z_{\nu}}\exp\left(  -V(x)-2\beta\int
_{\mathbb{R}^{d}}W(x,y)\nu(dy)\right)  dx,\label{ExTamuraStationary}%
\end{equation}
with $Z_{\nu}$ the normalizing constant. Clearly, the probability measures
given by \eqref{ExTamuraStationary} correspond to the distributions $\pi
(p)\in\mathcal{P}(\mathcal{X})$ defined in \eqref{ExLimitStationary}. The
relationship \eqref{EqTamuraREDerivative} can be established in a way
analogous to the proof of Theorem~\ref{ThLyapunovGradient}. On the other hand,
the representation for the orbital derivative of the Lyapunov function in
terms of the Donsker-Varadhan rate function as established in \cite{tamura87}
for the diffusion case does \emph{not} carry over to the finite-state Gibbs
models studied above. Because of this, we argue that
(\ref{EqTamuraREDerivative}) is the more natural and general way to
demonstrate that $F$ has the properties required of a Lyapunov function.

To make this more precise, consider the case of linear Markov processes. Let
$\Gamma$ be the infinitesimal generator (rate matrix) of an $\mathcal{X}%
$-valued ergodic Markov family with unique stationary distribution $\pi
\in\mathcal{P}(\mathcal{X})$. Let $p(\cdot)$ be a solution of the
corresponding forward equation \eqref{EqLimitKolmogorov}. Then
\[
\frac{d}{dt}R\left(  p(t)\Vert\pi\right)  =\int_{\mathcal{X}}f_{t}%
\Gamma\left(  \log f_{t}\right)  d\pi=-\mathcal{E}_{\Gamma}\left(  f_{t},\log
f_{t}\right)  ,
\]
where $f_{t}\doteq\frac{dp(t)}{d\pi}$ is the density of $p(t)$ with respect to
$\pi$ and $\mathcal{E}_{\Gamma}(\cdot,\cdot)$ is the Dirichlet form associated
with $\Gamma$ and its stationary distribution $\pi$, that is,
\[
\mathcal{E}_{\Gamma}(f,g)\doteq-\int_{\mathcal{X}}f\,\Gamma(g)d\pi=-\sum
_{x\in\mathcal{X}}f(x)\left(  \sum_{y\in\mathcal{X}}g(y)\Gamma_{xy}\right)
\pi_{x}%
\]
for test functions $f,g\!:\mathcal{X}\rightarrow\mathbb{R}$. On the other
hand, the Donsker-Varadhan $I$-function associated with $\Gamma$ is given by
\[
I_{\Gamma}(\mu)=\sup_{f:\mathcal{X}\rightarrow(0,\infty)}\left(-\int_{\mathcal{X}%
}\frac{\Gamma(f)(x)}{f(x)}\mu(dx)\right),\quad\mu\in\mathcal{P}(\mathcal{X}).
\]
If $\Gamma$ is also reversible (i.e., $\Gamma_{xy}\pi_{x}=\Gamma_{yx}\pi_{y}$
for all $x,y\in\mathcal{X}$), then $I$ takes the more explicit form
\[
I_{\Gamma}(\mu)=\mathcal{E}_{\Gamma}\left(  \sqrt{\frac{d\mu}{d\pi}}%
,\sqrt{\frac{d\mu}{d\pi}}\right)  ;
\]
see, for instance, Theorem~IV.14 and Exercise IV.24 in \cite[pp.\,47-50]%
{hollander00}. In general, the functions $f\mapsto\mathcal{E}_{\Gamma}\left(
\sqrt{f},\sqrt{f}\right)  $ and $f\mapsto\mathcal{E}_{\Gamma}\left(
f,\log f\right)  $ with $f$ ranging over all non-degenerate $\pi$-densities
are not proportional. As a counterexample, it is enough to evaluate the
Dirichlet forms for $\Gamma=%
\begin{pmatrix}
-1 & 1\\
1 & -1
\end{pmatrix}
$ and $\pi=(1/2,1/2)$.

\section{A PDE for Limits of Relative Entropies}

\label{subs-timedeppde} In the last section we saw that the scaling limits of
relative entropies with respect to stationary distributions of certain
$N$-particle Markov processes ${\boldsymbol{X}}^{N}$ yield candidate Lyapunov
functions for \eqref{EqLimitKolmogorov}. In this section we consider the case
where closed form expressions for the stationary distributions are not
available and consequently these limits cannot be evaluated explicitly. Recall
from the discussion in Section 5 of \cite{BuDuFiRa1} that in such cases our
basic approach to constructing Lyapunov functions is to take limits of the
scaled relative entropy $F_{t}^{N}$ specified in equation (1.4) of
\cite{BuDuFiRa1} (see also equation \eqref{eq:fnq} in this section), first as
$N\rightarrow\infty$ and then as $t\rightarrow\infty$. Theorem 5.5 of
\cite{BuDuFiRa1} shows that under some basic assumptions, the limit as
$N\rightarrow\infty$ coincides with the large deviation rate function
$J_{t}(\cdot):{\mathcal{S}}\rightarrow\lbrack0,\infty)$ for a certain sequence of empirical measures of $N$-particle systems
that converge to the solution of the ODE \eqref{EqLimitKolmogorov}.
This large deviations result\cite{Leo, BorSun, DupRamWu12} is recalled in Section \ref{Subs-LD}.
Next, we
formally derive a time-dependent PDE for the associated large deviation rate
function $J_{t}(r)$ in Section \ref{sec:timdeppde}, and present the stationary
version of this PDE in Section \ref{sec-PDE}. These formal calculations simply
motivate the form of the PDE -- the main result presented in Section
\ref{subs-pdelyap} that subsolutions to the stationary PDE serve as local
Lyapunov functions for the ODE \eqref{EqLimitKolmogorov}, does not rely on
this derivation. The proof of the main result relies on certain properties
that are first established in Section \ref{sec:stpde}.

\subsection{A large deviation result}

\label{Subs-LD}

Let, as in Section 2 of \cite{BuDuFiRa1}, ${\boldsymbol{X}}^{N}=(X^{1,N}%
,\ldots,X^{N,N})$ be a ${\mathcal{X}}^{N}$-valued Markov process with
transitions governed by the family of matrices $\{\Gamma(r),r\in{\mathcal{P}%
}({\mathcal{X}})\}$, where for each $r\in{\mathcal{P}}({\mathcal{X}})$,
$\Gamma(r)=\{\Gamma_{x,y}(r),x,y\in{\mathcal{X}}\}$ is a transition rate
matrix of a continuous time Markov chain on ${\mathcal{X}}$ (here for
simplicity we assume that $\Gamma^{N}=\Gamma$). Specifically, the transition
mechanism is as follows.
Given $\boldsymbol{X}^{N}(t)=\boldsymbol{x}\in\mathcal{X}^{N}$, an index
$i\in\left\{  1,\ldots,N\right\}  $ and $y\neq x_{i}$, the jump rate at time
$t$ for the transition
\[
\left(  x_{1},\ldots,x_{i-1},x_{i},x_{i+1},\ldots,x_{N}\right)  \mapsto\left(
x_{1},\ldots,x_{i-1},y,x_{i+1},\ldots,x_{N}\right)
\]
is $\Gamma_{x_{i}y}(r^{N}(\boldsymbol{x}))$, where
\begin{equation}
r_{y}^{N}(\boldsymbol{x})\doteq\frac{1}{N}\sum_{i=1}^{N}1_{\{x_{i}=y\}},\qquad
y\in{\mathcal{X}}.\label{def-rn}%
\end{equation}
The jump rates for transitions of any other type are zero.
Under the assumption of exchangeability of the initial random vector
$\{X^{i,N}(0)\}_{i=1,\ldots,N}$ $\ $we have that the processes $\{X^{i,N}%
\}_{i=1,\ldots,N}$ are also exchangeable. From this, it follows that the
empirical measure process $\mu^{N}=\{\mu^{N}(t)\}_{t\geq0}$ is a Markov chain
taking values in $\mathcal{S}_{N}=\mathcal{S}\cap\frac{1}{N}\mathbb{Z}^{d}$,
where $\mathcal{S}$ is the unit simplex which is identified with
$\mathcal{P}({\mathcal{X}})$, with the generator ${\mathcal{L}}^{N}$ given by
\begin{equation}
{\mathcal{L}}^{N}f(r)=\sum_{x,y\in{\mathcal{X}}:x\neq y}Nr_{x}\Gamma
_{xy}(r)\left[  f\left(  r+\frac{1}{N}(e_{y}-e_{x})\right)  -f(r)\right]
\label{def-ln}%
\end{equation}
for real-valued functions $f$ on $\mathcal{S}_{N}$.

We recall the following locally uniform LDP for the empirical measure process. The LDP has been established in
\cite{Leo, BorSun} while the locally uniform version used here is  taken from
\cite{DupRamWu12}.
\begin{theorem}
\label{thm:ldpthm} 	Suppose that for each $p\in{\mathcal{S}}$, $\Gamma(p)$ is
	 the transition rate matrix of an ergodic Markov chain and that Condition \ref{cond:lip} holds.
For $t\in\lbrack0,\infty)$ let $\boldsymbol{p}^{N}(t)$ be the distribution of
${\boldsymbol{X}}^{N}(t)=(X^{1,N}(t),\ldots,X^{N,N}(t))$.
Recall the mapping $r^{N}:\mathcal{X}^{N}%
\rightarrow\mathcal{P}_{N}(\mathcal{X})$ given by (\ref{def-rn}), i.e.,
$r^{N}(\boldsymbol{x})$ is the empirical measure of $\boldsymbol{x}$ . Assume
 that the initial random vector $\{X^{i,N}(0)\}_{i=1,\ldots,N}$ is exchangeable
 and assume that $r^{N}$ under the distribution $\boldsymbol{p}^{N}(0)$
 satisfies a large deviation principle (LDP)
with a rate function $J_{0}$. Then for each $t\in\lbrack0,\infty)$, $r^{N}$
under the distribution $\boldsymbol{p}^{N}(t)$ satisfies a locally uniform LDP
on ${\mathcal{P}}({\mathcal{X}})$ with a rate function $J_{t}$, thus given any
sequence $\{q_{N}\}_{N\in\mathbb{N}}$, $q_{N}\in\mathcal{S}_{N}$, such that
$q_{N}\rightarrow q\in\mathcal{S}$,
\[
\lim_{N\rightarrow\infty}\frac{1}{N}\log\boldsymbol{p}^{N}(t)\left(  \left\{
\boldsymbol{y}\in{\mathcal{X}}^{N}:r^{N}(\boldsymbol{y})=q_{N}\right\}
\right)  =-J_{t}(q).
\]
Furthermore, $J_{t}(q)<\infty$ for all $q\in{\mathcal{P}}({\mathcal{X}})$.
\end{theorem}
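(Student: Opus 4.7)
The plan is to reduce the assertion to known results for empirical measures of weakly interacting jump processes and then separately verify the finiteness of $J_t$. First I would recall that, thanks to exchangeability, the empirical measure $\mu^N(\cdot)$ is itself a $\mathcal{P}_N(\mathcal{X})$-valued Markov chain with generator $\mathcal{L}^N$ given by \eqref{def-ln}. Under Condition \ref{cond:lip} together with the ergodicity hypothesis on $\Gamma(p)$, the process-level LDP for $\mu^N$ on the Skorokhod space $D([0,t]:\mathcal{P}(\mathcal{X}))$, with rate function of the Freidlin--Wentzell form
\begin{equation*}
I_{[0,t]}(\phi) = J_0(\phi(0)) + \int_0^t L(\phi(s),\dot\phi(s))\,ds,
\end{equation*}
is precisely what is proved in \cite{Leo,BorSun}, with $L$ the local Legendre-type rate associated with the generator \eqref{def-ln}. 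By the contraction principle applied to the evaluation map $\phi \mapsto \phi(t)$, the marginal $r^N$ under $\boldsymbol{p}^N(t)$ satisfies an LDP with rate function
\begin{equation*}
J_t(q) = \inf\bigl\{I_{[0,t]}(\phi) : \phi \in D([0,t]:\mathcal{P}(\mathcal{X})),\; \phi(t) = q\bigr\}.
\end{equation*}

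Next I would upgrade this to the locally uniform statement by invoking \cite{DupRamWu12}, where this strengthening is carried out for exactly the class of weakly interacting jump processes considered here. The locally uniform LDP is the assertion that the limit holds for every sequence $q_N \in \mathcal{S}_N$ with $q_N \to q$; this follows from the standard equivalence between a locally uniform LDP and the existence of a continuous rate function combined with matching large deviation upper and lower bounds on neighborhoods shrinking to $q$, both of which are established in that reference under Condition \ref{cond:lip} and the ergodicity assumption.

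The remaining item is the finiteness claim $J_t(q) < \infty$ for every $q \in \mathcal{P}(\mathcal{X})$. For this I would exhibit a single path $\phi:[0,t] \to \mathcal{P}(\mathcal{X})$ with $\phi(t) = q$, $J_0(\phi(0)) < \infty$, and $\int_0^t L(\phi(s),\dot\phi(s))\,ds < \infty$. Pick any $p_0$ with $J_0(p_0) < \infty$; since $\Gamma(p)$ is an ergodic rate matrix for every $p \in \mathcal{S}$ and $p \mapsto \Gamma(p)$ is Lipschitz, the controlled dynamics underlying the variational representation of $L$ admit arbitrary bounded modifications of the jump rates. Using these one builds a Lipschitz curve from $p_0$ into $\mathcal{S}^\circ$ and then to $q$, using only edges $(x,y)$ with $\alpha(x,y)=1$ in the connectivity graph of $\Gamma(\cdot)$; since the baseline rates are bounded below on $\mathcal{S}$ on used edges and the running cost $L(\phi,\dot\phi)$ is finite whenever $\dot\phi$ lies in the cone generated by such edges and the induced controlled rates are bounded, the cost is finite.

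The main obstacle, and the only place where one must argue with care, is the finiteness step for boundary points $q \in \partial \mathcal{S}$: the local rate $L(\phi(s),\dot\phi(s))$ blows up when a component of $\phi$ hits zero too quickly. This is handled by approaching the boundary along a logarithmically slow schedule (e.g., driving the relevant coordinate of $\phi$ to zero like $e^{-1/(t-s)}$ near time $t$), which keeps the integral of $L$ finite while still landing at $q$; the ergodicity of $\Gamma(p)$ and the Lipschitz assumption ensure the required controlled rates remain uniformly bounded on the remaining coordinates throughout the construction.
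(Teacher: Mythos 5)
The paper itself does not supply a proof of Theorem~\ref{thm:ldpthm}: the surrounding text states explicitly that the LDP is ``established in \cite{Leo, BorSun} while the locally uniform version used here is taken from \cite{DupRamWu12},'' and no proof environment follows the theorem. So there is nothing internal to compare against; the theorem is recalled, not proved. Your proposal is therefore an attempt to reconstruct a proof the authors chose not to give.

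That said, your reconstruction matches the intended provenance: process-level LDP for $\mu^N$ from \cite{Leo,BorSun}, contraction to the time-$t$ marginal, and a locally uniform upgrade from \cite{DupRamWu12}. Two cautions. First, you appeal to the connectivity structure ``$\alpha(x,y)=1$,'' but the matrix $\alpha$ belongs to the Gibbs construction of Section~\ref{SectGibbs}; in this theorem the only hypothesis is that $\Gamma(p)$ is ergodic and Lipschitz in $p$, so the support of $\Gamma(p)$ may vary with $p$, and your path construction has to respect this (ergodicity at each $p$ gives connectivity at each $p$, which suffices, but the argument should be phrased in those terms). Second, the finiteness of $J_t(q)$ at boundary points $q\in\partial\mathcal{S}$ is genuinely the only substantive step, and your sketch is in the right spirit but leaves the heavy lifting implicit: when $\phi_x(s)\to 0$ the admissible outflow from $x$ and its cost both degenerate with $\phi_x$, so one must choose a schedule for $\phi_x$ approaching zero slowly enough that the controlled-rate deviation from the natural (vanishing) rate remains integrable in $L$. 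Your proposed schedule $e^{-1/(t-s)}$ is actually more aggressive than needed — a polynomial schedule such as $\phi_x(s)\sim (t-s)^2$ already yields a running cost of order $(t-s)\log\tfrac{1}{t-s}$, which integrates — but either way, a reader cannot verify the claim without the explicit computation showing $\int_0^t L(\phi(s),\dot\phi(s))\,ds<\infty$. Since the paper delegates all of this to \cite{DupRamWu12}, the cleanest fix is to either carry out the boundary estimate carefully or cite that reference for the finiteness of the rate function as well.
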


We will now formally derive a PDE solved by $J_{t}(q)$. 

\subsection{A time-dependent PDE}

\label{sec:timdeppde}

For notational convenience, throughout this section for $t\geq0$ and
$r\in{\mathcal{S}}$ we write $J_{t}(r)$ as $J(r,t)$. For $t\geq0$, let
$u^{N}(t)$ denote the distribution of $\mu^{N}(t)$, that is, for
$r\in{\mathcal{S}}_{N}$, let $u_{r}^{N}(t)=P\left(  \mu^{N}(t)=r\right)  $.
Then, $u^{N}$ satisfies the Kolmogorov forward equation
\begin{equation}
\dfrac{du^{N}}{dt}(t)=u^{N}(t){\mathcal{L}}^{N}, \label{un-fwd}%
\end{equation}
where ${\mathcal{L}}^{N}$ is as in \eqref{def-ln}. For $r\in{\mathcal{S}}_{N}%
$, substituting into (\ref{un-fwd}) the approximation
\[
u_{r}^{N}(t)\approx e^{-NJ(r,t)}%
\]
that follows from the LDP stated in Theorem \ref{thm:ldpthm}, and recalling
the form of the generator ${\mathcal{L}}^{N}$ from (\ref{def-ln}), we obtain
\begin{align*}
\frac{\partial}{\partial t}e^{-NJ(r,t)}  &  =\sum_{\overset{x,y\in
{\mathcal{X}},x\neq y:}{r-\frac{1}{N}(e_{y}-e_{x})\in{\mathcal{S}}_{N}}%
}e^{-NJ(r-\frac{1}{N}(e_{y}-e_{x}),t)}\left(  Nr_{x}+1\right)  \Gamma
_{xy}\left(  r-\frac{1}{N}(e_{y}-e_{x})\right) \\
&  \quad\quad-\sum_{\overset{x,y\in{\mathcal{X}},x\neq y:}{r+\frac{1}{N}%
(e_{y}-e_{x})\in{\mathcal{S}}_{N}}}e^{-NJ(r,t)}Nr_{x}\Gamma_{xy}(r).
\end{align*}
Observing that the left-hand side of the last display equals $-Ne^{-NJ(r,t)}%
\left(  \partial J(r,t)/\partial t\right)  $, and multiplying both sides by
$e^{NJ(r,t)}/N$, we obtain
\[%
\begin{array}
[c]{l}%
\displaystyle-\dfrac{\partial}{\partial t}J(r,t)\\
\displaystyle\qquad=\sum_{\overset{x,y\in{\mathcal{X}},x\neq y:}{r-\frac{1}%
{N}(e_{y}-e_{x})\in{\mathcal{S}}_{N}}}e^{-N\left[  J(r-\frac{1}{N}(e_{y}%
-e_{x}),t)-J(r,t)\right]  }\left(  r_{x}+\frac{1}{N}\right)  \Gamma
_{xy}\left(  r-\frac{1}{N}(e_{y}-e_{x})\right) \\
\displaystyle\qquad\quad\quad-\sum_{\overset{x,y\in{\mathcal{X}},x\neq
y:}{r+\frac{1}{N}(e_{y}-e_{x})\in{\mathcal{S}}_{N}}}r_{x}\Gamma_{xy}(r).
\end{array}
\]
If $J$ is smooth and $N$ is large, we can use the approximation
\[
J\left(  r-\frac{1}{N}(e_{y}-e_{x}),t\right)  -J(r,t)\approx-\frac{1}%
{N}\langle DJ(r,t),e_{y}-e_{x}\rangle+o(1/N).
\]
Substituting this approximation into the previous display, sending
$N\rightarrow\infty$ and recalling that $\Gamma_{x,y}(\cdot)$ is continuous,
we obtain the following PDE for $J(r,t)$: for $r\in{\mathcal{S}}$ and
$t\in\lbrack0,\infty)$,
\begin{equation}
-\frac{\partial J}{\partial t}(r,t)=\sum_{x,y\in{\mathcal{X}}:x\neq y}\left[
e^{\langle DJ(r,t),e_{y}-e_{x}\rangle}-1\right]  r_{x}\Gamma_{xy}(r).
\label{pde-td}%
\end{equation}
As mentioned earlier, this derivation is not rigorous because we did not
establish the smoothness properties of $J$ assumed in the calculations. In
fact, in general one does not expect this smoothness property to hold and one
would have to interpret $J$ as a viscosity solution to the PDE (\ref{pde-td})
with appropriate boundary conditions. However, the derivation simply serves to
motivate the form of the stationary PDE and the proof of the main result given
in the next section does not rely on this derivation.

\subsection{The stationary PDE and Lyapunov functions}

\label{sec-PDE}

We now introduce our main tool for constructing local Lyapunov functions for
\eqref{EqLimitKolmogorov}. Recall
\begin{equation}
F_{t}^{N}(q)\doteq\frac{1}{N}R(\otimes^{N}q\Vert\boldsymbol{p}^{N}%
(t)),\;q\in\mathcal{S}. \label{eq:fnq}%
\end{equation}
Formally writing
\begin{equation}
\lim_{t\rightarrow\infty}\lim_{N\rightarrow\infty}F_{t}^{N}(q)=\lim
_{t\rightarrow\infty}J_{t}(q)=J^{\ast}(q),\;q\in\mathcal{S}, \label{eq:852}%
\end{equation}
one expects from the formal derivation of the last section that the function
$J^{\ast}$ solves the following PDE:
\begin{equation}
\boldsymbol{H}(r,-DJ(r))=0, \label{eq:statpde}%
\end{equation}
where for $(r,\alpha)\in\mathcal{S}\times\mathbb{R}^{d}$,
\begin{equation}
\boldsymbol{H}(r,\alpha)\doteq-\sum_{x,y\in{\mathcal{X}}:x\neq y}r_{x}%
\Gamma_{xy}(r)\left[  e^{-\langle\alpha,e_{y}-e_{x}\rangle}-1\right]
\label{eq-Hfnsimp}%
\end{equation}
(we use $\boldsymbol{H}$ to distinguish from $H(x,p)$ as used in the section on
systems of Gibbs type). In the introduction of \cite{BuDuFiRa1} it was
discussed why the limit function $J^{\ast}$ may serve as a (local) Lyapunov
function for \eqref{EqLimitKolmogorov}. This suggests solutions of the
stationary PDE \eqref{eq:statpde} as candidates for a Lyapunov function. The
main result of this section makes this precise by proving that positive
definite subsolutions of \eqref{eq:statpde} give local Lyapunov functions for
\eqref{EqLimitKolmogorov}. To state the precise result we begin by recalling
the definition of a subsolution.

\begin{definition}
Let $\mathbb{D}$ be a relatively open subset of ${\mathcal{S}}$. A
${\mathcal{C}}^{1}$ function $J:\mathbb{D}\rightarrow\mathbb{R}$ is said to be
a \textbf{subsolution} of \eqref{eq:statpde} on $\mathbb{D}$ if
\begin{equation}
\boldsymbol{H}(r,-DJ(r))\geq0,\;\mbox{ for all }r\in\mathbb{D}.
\label{eq:statPDE}%
\end{equation}
Moreover, $J$ is said to be a solution to the PDE if (\ref{eq:statPDE}) holds
with equality.
\end{definition}

Recall the definition of positive definiteness given in Definition \ref{posdef}. Theorem \ref{prop:lyap} below says that a positive definite subsolution  of the PDE \eqref{eq:statpde} is a local
Lyapunov function.

\begin{theorem}
\label{prop:lyap} Suppose Condition \ref{cond:lip} holds.
Let $\pi^{\ast}\in{\mathcal{S}}^{\circ}$ be a fixed point of
\eqref{EqLimitKolmogorov}, and let $\mathbb{D}$ be a relatively open subset of
${\mathcal{S}}$ that contains $\pi^{\ast}$. Let $J:\mathbb{D}\rightarrow
\mathbb{R}$ be a ${\mathcal{C}}^{1}$ positive definite function that is a
subsolution of \eqref{eq:statpde} on $\mathbb{D}$. Then $J$ is a local
Lyapunov function for (\ref{EqLimitKolmogorov}) associated with $(\mathbb{D},
\pi^{\ast})$.
\end{theorem}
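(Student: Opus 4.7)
The plan is to differentiate $J$ along a solution $p(\cdot)$ of \eqref{EqLimitKolmogorov} and compare the resulting expression with the Hamiltonian $\boldsymbol{H}(r,-DJ(r))$, which is nonnegative by the subsolution hypothesis. Once the identity $\frac{d}{dt}J(p(t))\le -\boldsymbol{H}(p(t),-DJ(p(t)))$ is in place, the weak descent $\frac{d}{dt}J(p(t))\le 0$ is immediate, and Definition \ref{loclyap} together with Proposition \ref{stab-loc} will deliver local stability; the delicate part will be upgrading this to strict inequality whenever $p(t)\ne\pi^{*}$.

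Concretely, for $r=p(t)\in\mathbb{D}$ the chain rule yields $\frac{d}{dt}J(p(t))=\langle DJ(r),r\Gamma(r)\rangle$. Using the rate-matrix identity $\Gamma_{xx}(r)=-\sum_{y\ne x}\Gamma_{xy}(r)$, I would regroup the sum as
\[
\langle DJ(r),r\Gamma(r)\rangle \;=\; \sum_{x\ne y} r_x\,\Gamma_{xy}(r)\,\langle DJ(r),e_y-e_x\rangle.
\]
Since $r_x\,\Gamma_{xy}(r)\ge 0$ for $x\ne y$, the elementary inequality $e^{z}-1\ge z$ (with equality iff $z=0$) applied pointwise gives
\[
\sum_{x\ne y} r_x\,\Gamma_{xy}(r)\,\langle DJ(r),e_y-e_x\rangle \;\le\; \sum_{x\ne y} r_x\,\Gamma_{xy}(r)\bigl[e^{\langle DJ(r),e_y-e_x\rangle}-1\bigr] \;=\; -\,\boldsymbol{H}(r,-DJ(r)).
\]
The subsolution hypothesis then forces $\frac{d}{dt}J(p(t))\le -\boldsymbol{H}(p(t),-DJ(p(t)))\le 0$ on $\mathbb{D}$, which is the weak Lyapunov property.

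For the strict inequality, suppose $r=p(t)\ne\pi^{*}$ and $\frac{d}{dt}J(p(t))=0$. Then both inequalities in the chain above are tight. Tightness in the convex bound forces $\langle DJ(r),e_y-e_x\rangle=0$ for every pair $(x,y)$ with $r_x\,\Gamma_{xy}(r)>0$; since $\pi^{*}\in\mathcal{S}^{\circ}$, we may also assume (by shrinking $\mathbb{D}$ if necessary using the continuous decrease of the sub-level sets from Definition \ref{posdef}) that $r\in\mathcal{S}^{\circ}$ and that $\Gamma(r)$ is irreducible, which is the standing hypothesis attached to Theorem \ref{thm:ldpthm} in this section. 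Irreducibility then implies that the communication graph underlying $\Gamma(r)$ is strongly connected, so $DJ(r)$ is constant along $\mathcal{X}$, and since $DJ(r)\in\mathcal{H}_{0}$ this forces $DJ(r)=0$. The main obstacle is to rule out this last eventuality at any $r\in\mathbb{D}\setminus\{\pi^{*}\}$: positive definiteness identifies $\pi^{*}$ as the unique minimizer of $J$ on $\overline{\mathbb{D}}$ but does not immediately exclude other critical points, so the argument must either further shrink $\mathbb{D}$ (again using the continuous sub-level set property of Definition \ref{posdef} together with $\mathcal{C}^{1}$-regularity near $\pi^{*}$) or use the subsolution PDE itself to propagate $DJ(r)=0$ into a contradiction with $J(r)>J(\pi^{*})$. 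Once this rigidity is established, strict inequality $\frac{d}{dt}J(p(t))<0$ holds on $\mathbb{D}\setminus\{\pi^{*}\}$, and combined with the $\mathcal{C}^{1}$, positive definite, and uniform continuity properties already assumed, $J$ satisfies all the requirements of Definition \ref{loclyap}.
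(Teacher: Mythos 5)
Your route for the weak inequality is genuinely different from, and more elementary than, the paper's. Instead of going through the Legendre pair $(\boldsymbol{L},\boldsymbol{H})$ (Lemma \ref{lem:dual}) and the observation $\boldsymbol{L}(r,\beta(r))=0$, you expand $\langle DJ(r),r\Gamma(r)\rangle$ directly and apply $e^{z}-1\ge z$ termwise; since $\boldsymbol{H}(r,\alpha)$ is literally a sum of exponentials, this is precisely the duality unpacked at the level of each summand, and it yields the chain $\frac{d}{dt}J(p(t))\le-\boldsymbol{H}(r,-DJ(r))\le 0$ with no extra machinery. Your reduction of the strict-inequality question is likewise sound and parallels the paper's use of Lemma \ref{lem:concav}(c): tightness of $e^{z}-1\ge z$ forces $\langle DJ(r),e_{y}-e_{x}\rangle=0$ whenever $r_{x}\Gamma_{xy}(r)>0$, and then ergodicity plus $r\in\mathcal{S}^{\circ}$ (so that $r_{x}>0$ for all $x$) give strong connectedness and hence $DJ(r)\equiv\text{const}$ on $\mathcal{X}$, which combined with $DJ(r)\in\mathcal{H}_{0}$ forces $DJ(r)=0$. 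You are also right that this step implicitly uses ergodicity of $\Gamma(r)$ and $r\in\mathcal{S}^{\circ}$; the paper's proof via Lemma \ref{lem:concav}(c) carries exactly the same (unstated) dependence.

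The place where your proof is genuinely incomplete is the one you flag yourself: ruling out $DJ(r)=0$ at points $r\in\mathbb{D}\setminus\{\pi^{*}\}$. The paper closes this with a single sentence, ``Recall that by assumption $J$ is positive definite, and thus in particular $DJ(r)\neq 0$ whenever $r\neq\pi^{*}$,'' and then applies its strict inequality \eqref{stricineq} to $\alpha=-DJ(r)\neq0$. So the intended resolution is exactly the one you were reluctant to assert: treat the non-vanishing of $DJ$ away from $\pi^{*}$ as part of what ``positive definite'' buys you, rather than deriving it from the subsolution inequality or from shrinking $\mathbb{D}$. Your hesitation is defensible as a matter of strict logic (Definition \ref{posdef} constrains the sub-level sets only near $K^{*}$, so it does not obviously forbid an isolated degenerate critical point away from $\pi^{*}$), but it is not a gap peculiar to your argument; once you grant the same reading of positive definiteness that the paper uses, your proof closes at the same point and in the same way. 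Net: same skeleton, a cleaner and more elementary derivation of the key inequality on your side, and an honestly acknowledged but shared reliance on the $DJ\neq0$ assertion at the end.
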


Before proceeding with the proof of the theorem we note some basic properties
of the function $\boldsymbol{H}$ introduced in \eqref{eq-Hfnsimp}.

\subsection{Properties of $\boldsymbol{H}$}

\label{sec:stpde}

\begin{lemma}
\label{lem:concav} Fix $r\in\mathcal{S}$ and $\alpha,\tilde{\alpha}%
\in\mathbb{R}^{d}$.

\begin{enumerate}
\item[(a)] If $\tilde{\alpha}-\alpha=c\boldsymbol{1}$ for some $c\in
\mathbb{R}$, then $\boldsymbol{H}(r,\alpha)=\boldsymbol{H}(r,\tilde{\alpha})$.

\item[(b)] $\boldsymbol{H}(r,\cdot)$ is smooth and concave on $\mathbb{R}^{d}$.

\item[(c)] Suppose that for each $p\in\mathcal{S}$, $\Gamma(p)$ is the rate
matrix of an ergodic Markov chain. Then given $r\in\mathcal{S}^{\circ}$,
$\tilde{\alpha}-\alpha\in\mathbb{R}^{d}\setminus\{c\boldsymbol{1}%
:c\in\mathbb{R}\}$ and any $\rho\in(0,1)$,
\[
\boldsymbol{H}(r,\rho\tilde{\alpha}+(1-\rho)\alpha)>\rho\boldsymbol{H}%
(r,\tilde{\alpha})+(1-\rho)\boldsymbol{H}(r,\alpha).
\]

\end{enumerate}
\end{lemma}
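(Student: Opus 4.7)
The plan is to treat the three claims in turn, observing that each exponential appearing in $\boldsymbol{H}(r,\alpha)$ depends on $\alpha$ only through the linear functional $\alpha\mapsto \langle\alpha,e_y-e_x\rangle$, which is the key structural feature.

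For part (a), I would simply compute: if $\tilde{\alpha}-\alpha=c\boldsymbol{1}$, then $\langle \tilde{\alpha}-\alpha,e_y-e_x\rangle = c(\boldsymbol{1}_y-\boldsymbol{1}_x)=0$ for every pair $(x,y)$, so every exponent in the definition of $\boldsymbol{H}(r,\cdot)$ is unchanged, giving the equality. For part (b), smoothness is immediate since $\boldsymbol{H}(r,\cdot)$ is a finite $\mathbb{R}$-linear combination, with coefficients $r_x\Gamma_{xy}(r)$, of exponentials of linear functionals of $\alpha$ (plus a constant). For concavity, I would rewrite
\[
\boldsymbol{H}(r,\alpha)=\sum_{x\neq y} r_x\Gamma_{xy}(r)\bigl[1-e^{-\langle\alpha,e_y-e_x\rangle}\bigr],
\]
note that each coefficient $r_x\Gamma_{xy}(r)\geq 0$ since off-diagonal entries of a rate matrix are nonnegative, and use that $t\mapsto 1-e^{-t}$ is concave, so that each summand is the composition of a concave function with an affine map and is therefore concave. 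A nonnegative combination of concave functions is concave.

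For part (c), the strategy is to identify the directions of strict concavity. The summand indexed by $(x,y)$ is \emph{strictly} concave along the direction $\beta\doteq\tilde{\alpha}-\alpha$ precisely when $r_x\Gamma_{xy}(r)>0$ and $\langle\beta,e_y-e_x\rangle=\beta_y-\beta_x\neq 0$, because $t\mapsto 1-e^{-t}$ is strictly concave on~$\mathbb{R}$. It therefore suffices to exhibit at least one pair $(x,y)$ with $\Gamma_{xy}(r)>0$ and $\beta_x\neq \beta_y$; the hypothesis $r\in\mathcal{S}^{\circ}$ ensures $r_x>0$ automatically for that pair, and summing a strictly concave term with concave terms preserves the strict inequality.

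To produce such a pair, I would argue by contradiction: suppose $\beta_x=\beta_y$ for every $(x,y)$ with $\Gamma_{xy}(r)>0$. Since $\Gamma(r)$ is the rate matrix of an ergodic (hence irreducible) chain, for any two states $x,y\in\mathcal{X}$ there is a finite path $x=x_0,x_1,\ldots,x_k=y$ with $\Gamma_{x_{i-1}x_i}(r)>0$ for all $i$; by assumption $\beta$ is constant along the path, so $\beta_x=\beta_y$ for all $x,y\in\mathcal{X}$, i.e.\ $\beta\in\{c\boldsymbol{1}:c\in\mathbb{R}\}$, contradicting the hypothesis on $\tilde{\alpha}-\alpha$. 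The only part that requires more than a routine calculation is this irreducibility argument in (c); the rest is book-keeping.
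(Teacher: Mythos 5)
Your proof is correct and follows essentially the same route as the paper's: in part (b) you decompose $\boldsymbol{H}(r,\cdot)$ as a nonnegative combination of concave functions of affine maps, and in part (c) you use ergodicity (irreducibility) to locate a transition $x\to y$ with $\Gamma_{xy}(r)>0$ and $\beta_y\neq\beta_x$, which makes the corresponding summand strictly concave along the segment. The only cosmetic difference is that the paper establishes strict concavity in (c) by computing $\frac{d^2}{d\rho^2}\boldsymbol{H}(r,\rho w+\alpha)$ explicitly and showing it is negative, whereas you invoke the strict concavity of $t\mapsto 1-e^{-t}$ composed with a non-constant affine map; your contradiction formulation of the irreducibility step is just the contrapositive of the paper's direct path argument.
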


\begin{proof}
The definition of $\boldsymbol{H}$ in \eqref{eq-Hfnsimp} immediately implies
part (a), and (b) follows since the map $\alpha\mapsto e^{-\langle
\alpha,v\rangle}-1$ is smooth and convex for any vector $v\in\mathbb{R}^{d}$
and $r_{x}\Gamma_{xy}(r)\geq0$ for all $x\neq y$, $r\in{\mathcal{S}}$. To
prove (c), fix $r\in{\mathcal{S}}^{\circ}$ and $\alpha,\tilde{\alpha}%
\in\mathbb{R}^{d}$ such that $w\doteq\tilde{\alpha}-\alpha\not \in
\{c\boldsymbol{1}:c\in\mathbb{R}\}$. Then there exist $\bar{x},\bar{y}%
\in\{1,\ldots,d\}$ such that $w_{\bar{x}}\neq w_{\bar{y}}$. Due to the
smoothness and concavity of $\boldsymbol{H}(r,\cdot)$, it suffices to show
that
\begin{equation}
\frac{d^{2}}{d\rho^{2}}\boldsymbol{H}(r,\rho\tilde{\alpha}+(1-\rho
)\alpha)=\frac{d^{2}}{d\rho^{2}}\boldsymbol{H}(r,\rho w+\alpha)<0.
\label{concave}%
\end{equation}
Note that
\begin{align*}
\frac{d^{2}}{d\rho^{2}}\boldsymbol{H}(r,\rho w+\alpha)  &  =\frac{d^{2}}%
{d\rho^{2}}\left[  -\sum_{x,y\in{\mathcal{X}}:x\neq y}\left(  e^{\left\langle
-\alpha-\rho w,e_{y}-e_{x}\right\rangle }-1\right)  r_{x}\Gamma_{xy}(r)\right]
\\
&  =-\sum_{x,y\in{\mathcal{X}}:x\neq y}\left(  w_{y}-w_{x}\right)
^{2}e^{\left\langle -\alpha-\rho w,e_{y}-e_{x}\right\rangle }r_{x}\Gamma
_{xy}(r).
\end{align*}
Since $\Gamma$ is ergodic there is a sequence of distinct states $\bar
{x}=x_{1},x_{2},\ldots,x_{j}=\bar{y}$ such that $\Gamma_{x_{i}x_{i+1}}(r)>0$.
Also, since $w_{\bar{x}}\neq w_{\bar{y}}$, for some $i$ we have $w_{x_{i}}\neq
w_{x_{i+1}}$, and so (\ref{concave}) follows.
\end{proof}

\medskip For $z\in\lbrack0,\infty)$ let
\[
\ell(z)=z\log z-z+1.
\]
Given $r\in{\mathcal{S}}$, $\beta\in\mathbb{R}^{d}$, define
\begin{align}
&  \boldsymbol{L}(r,\beta)\nonumber\\
&  \doteq\inf_{u_{xy}\in\mathbb{R}_{+},y\neq x}\left[  \sum_{x,y\in
{\mathcal{X}}:x\neq y}r_{x}\Gamma_{xy}(r)\ell\left(  \frac{u_{xy}}{r_{x}%
\Gamma_{xy}(r)}\right)  :\sum_{x,y\in{\mathcal{X}}:x\neq y}(e_{y}-e_{x}%
)u_{xy}=\beta\right]  .\label{eq:lfnr}%
\end{align}
The following lemma establishes duality relations between $\boldsymbol{L}$ and
$\boldsymbol{H}$.

\begin{lemma}
\label{lem:dual} Fix $r\in{\mathcal{S}}$. For $\beta\in\mathbb{R}^{d}$
\begin{equation}
\boldsymbol{L}(r,\beta)=-\inf_{\alpha\in\mathbb{R}^{d}}\left[  \langle
\alpha,\beta\rangle-\boldsymbol{H}(r,\alpha)\right]  , \label{HtoL}%
\end{equation}
and for $\alpha\in\mathbb{R}^{d}$%
\begin{equation}
\boldsymbol{H}(r,\alpha)=\inf_{\beta\in\mathbb{R}^{d}}\left[  \langle
\alpha,\beta\rangle+\boldsymbol{L}(r,\beta)\right]  . \label{LtoH}%
\end{equation}

\end{lemma}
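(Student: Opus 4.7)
The two identities form a Fenchel–Legendre dual pair, so the plan is to prove \eqref{HtoL} by applying convex duality to the constrained infimum defining $\boldsymbol{L}(r,\cdot)$, and then to deduce \eqref{LtoH} as a biconjugate using the concavity of $\boldsymbol{H}(r,\cdot)$ established in Lemma \ref{lem:concav}(b).

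For \eqref{HtoL}, the first step is to introduce a Lagrange multiplier $\alpha\in\mathbb{R}^{d}$ for the affine equality constraint $\sum_{x\neq y}(e_{y}-e_{x})u_{xy}=\beta$ in the definition \eqref{eq:lfnr} of $\boldsymbol{L}(r,\beta)$. Writing $c_{xy}\doteq r_{x}\Gamma_{xy}(r)$, the Lagrangian decouples across the index pairs, and the inner minimization over $u\geq 0$ reduces to one-dimensional problems solvable via the elementary Legendre identity $\sup_{z\geq 0}(vz-\ell(z))=e^{v}-1$, with maximizer $z=e^{v}$. A short computation shows that the resulting dual function equals $\boldsymbol{H}(r,-\alpha)+\langle\alpha,\beta\rangle$. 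Strong duality then follows from a standard convex-programming argument (e.g., Rockafellar, Theorem 28.2): the primal is convex with affine constraints, is bounded below, and is feasible precisely when $\beta\in\mathcal{H}_{0}$; if $\beta\notin\mathcal{H}_{0}$, both sides of \eqref{HtoL} equal $+\infty$ (for the dual, take $\alpha$ proportional to $\boldsymbol{1}$ and let the scalar diverge, using $\sum_{i}\beta_{i}\neq 0$). Substituting $\tilde\alpha=-\alpha$ in the resulting supremum yields exactly \eqref{HtoL}.

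For \eqref{LtoH} I would invoke the Fenchel–Moreau biconjugate theorem. By Lemma \ref{lem:concav}(b), $\alpha\mapsto -\boldsymbol{H}(r,\alpha)$ is finite, continuous, and convex on $\mathbb{R}^{d}$, hence proper, lower semicontinuous, and convex. Identity \eqref{HtoL}, after a change of sign of the conjugate variable, states that $\boldsymbol{L}(r,\cdot)$ is precisely the convex Fenchel conjugate of $-\boldsymbol{H}(r,\cdot)$; a second conjugation therefore recovers $-\boldsymbol{H}(r,\cdot)$, which on rearrangement gives \eqref{LtoH}.

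The main obstacle is the careful justification of strong duality in Step 1, together with the bookkeeping for pairs $(x,y)$ where $c_{xy}=0$. Such pairs force $u_{xy}=0$ on the primal side (otherwise the cost is $+\infty$) and contribute zero to $\boldsymbol{H}(r,-\alpha)$ on the dual side, so the optimization and the dual identity restrict consistently to the set of indices with $c_{xy}>0$, and the rest of the argument proceeds as above without further modification.
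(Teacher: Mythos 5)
Your proof is correct, and it does take a genuinely different route from the paper's. The paper first rewrites $-\boldsymbol{H}(r,-\cdot)$ as the finite sum $\tilde{\boldsymbol{H}}(r,\cdot)=\sum_{v\in\mathcal{V}}h_v(r,\cdot)$ with $h_v(r,\alpha)=[e^{\langle\alpha,v\rangle}-1]\,\Lambda_v(r)$, computes the Legendre transform $\ell_v(r,\cdot)$ of each summand explicitly (supported on the ray $\{\theta v:\theta\geq 0\}$), observes that the constrained infimum defining $\boldsymbol{L}(r,\cdot)$ in \eqref{eq:lfnr} is precisely the inf-convolution of the $\ell_v(r,\cdot)$'s, and then invokes the fact that the conjugate of a finite sum of convex functions equals the inf-convolution of the conjugates (Corollary D.4.2 of \cite{DupEllBook}) to obtain \eqref{HtoL} in one stroke. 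You instead go \emph{bottom-up}: you dualize the constrained primal defining $\boldsymbol{L}$ directly via a Lagrange multiplier $\alpha$ for the linear constraint, decouple the Lagrangian, and solve the one-dimensional minimizations using $\sup_{z\geq 0}(vz-\ell(z))=e^{v}-1$, then close the gap with strong duality. These are two sides of the same Fenchel coin (the inf-convolution result the paper cites is essentially a packaged form of the Lagrangian-duality computation you carry out). One small advantage of the paper's route is that Corollary D.4.2 only requires the effective domains of the $h_v$'s to have overlapping relative interiors, which is automatic since each $h_v(r,\cdot)$ is finite and smooth on all of $\mathbb{R}^{d}$; your route requires you to verify a constraint-qualification hypothesis for strong duality, and you need to be a bit careful that the identity holds even when the primal is infeasible (i.e.\ when $\boldsymbol{L}(r,\beta)=+\infty$), not only when $\beta\notin\mathcal{H}_0$ but also when $\beta\in\mathcal{H}_0$ yet no nonnegative $u$ supported on the pairs with $r_x\Gamma_{xy}(r)>0$ satisfies the constraint. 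For \eqref{LtoH} the two proofs coincide: both use Lemma \ref{lem:concav}(b) to conclude that $\tilde{\boldsymbol{H}}(r,\cdot)$ is finite, continuous, and convex, and then apply Fenchel--Moreau biconjugation.
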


\begin{proof}
Fix $r\in{\mathcal{S}}$, and define $\tilde{\boldsymbol{H}}(r,\alpha
)=-\boldsymbol{H}(r,-\alpha)$ for $\alpha\in\mathbb{R}^{d}$. Let
${\mathcal{V}}\doteq\{e_{y}-e_{x}:x,y\in{\mathcal{X}},x\neq y\}$. Then note
that for $\alpha\in\mathbb{R}^{d}$,
\begin{equation}
\tilde{\boldsymbol{H}}(r,\alpha)=\sum_{v\in{\mathcal{V}}}h_{v}(r,\alpha
),\label{tilde-H}%
\end{equation}
where for $v=e_{y}-e_{x}\in{\mathcal{V}}$,
\[
h_{v}(r,\alpha)=\left[  e^{\langle\alpha,v\rangle}-1\right]  r_{x}\Gamma
_{xy}(r),\quad\alpha\in\mathbb{R}^{d}.
\]
For $v\in{\mathcal{V}}$, let $\ell_{v}(r,\cdot)$ be the Legendre transform of
$h_{v}(r,\cdot)$:
\[
\ell_{v}(r,\beta)=\sup_{\alpha\in\mathbb{R}^{d}}\left[  \langle\alpha
,\beta\rangle-h_{v}(r,\alpha)\right]  ,\quad\beta\in\mathbb{R}^{d}.
\]
Then with $\Lambda_{v}(r)\doteq r_{x}\Gamma_{xy}(r)$ when $v=e_{y}-e_{x}$,%
\[
\ell_{v}(r,\beta)=\left\{
\begin{array}
[c]{ll}%
\Lambda_{v}(r)\ell\left(  \frac{\theta}{\Lambda_{v}(r)}\right)   &
\mbox{ if  }\beta=\theta v\mbox{ for some }\theta\geq0,\\
\infty & \mbox{ otherwise. }
\end{array}
\right.
\]
From (\ref{tilde-H}), it follows using standard properties of Legendre
transforms (see, e.g., Corollary D.4.2 of \cite{DupEllBook}) that the function
$\boldsymbol{L}(r,\cdot)$ defined in \eqref{eq:lfnr} is the Legendre transform
of the function $\tilde{\boldsymbol{H}}(r,\cdot)$, that is,
\[
\boldsymbol{L}(r,\beta)=\sup_{\alpha\in\mathbb{R}^{d}}\left[  \langle
\alpha,\beta\rangle-\tilde{\boldsymbol{H}}(r,\alpha)\right]  ,
\]
which is easily seen to be equivalent to (\ref{HtoL}). Finally, since
$\tilde{\boldsymbol{H}}$ is convex and continuous by Lemma \ref{lem:concav}%
(b), the duality property of Legendre transforms shows that
\[
\tilde{\boldsymbol{H}}(r,\alpha)=\sup_{\beta\in\mathbb{R}^{d}}\left[
\langle\alpha,\beta\rangle-\boldsymbol{L}(r,\beta)\right]  .
\]
This is clearly equivalent to the relation (\ref{LtoH}), and so the proof is complete.
\end{proof}

We now return to the proof of Theorem \ref{prop:lyap}.

\subsection{Proof of Theorem \ref{prop:lyap}}

\label{subs-pdelyap}

We begin by noting that, for any $r\in\mathcal{S}$, $\beta(r)=r\Gamma(r)$
satisfies
\begin{equation}
\boldsymbol{H}(r,0)=\inf_{\beta\in\mathbb{R}^{d}}\left[  \boldsymbol{L}%
(r,\beta)\right]  =\boldsymbol{L}(r,\beta(r))=0. \label{dual-1}%
\end{equation}
By (\ref{HtoL}), for any $\alpha\in\mathbb{R}^{d}$
\[
\boldsymbol{H}(r,\alpha)\leq\boldsymbol{L}(r,\beta(r))+\langle\alpha
,\beta(r)\rangle.
\]
We next prove for any $\alpha\neq0$, $\alpha\in\mathcal{H}_{0}$, that
\begin{equation}
\boldsymbol{H}(r,\alpha)<\boldsymbol{L}(r,\beta(r))+\langle\alpha
,\beta(r)\rangle. \label{stricineq}%
\end{equation}
We argue via contradiction, and thus assume that (\ref{stricineq}) holds with
equality. Note that we must have $\alpha_{x}\neq\alpha_{y}$ for some
$x,y\in\mathcal{X}$, since otherwise $\alpha\in\mathcal{H}_{0}$ implies%
\[
0=\alpha\cdot\boldsymbol{1}=d\alpha_{x},\;x\in\mathcal{X},
\]
and then $\alpha=0$. Since $\alpha_{x}\neq\alpha_{y}$ for some $x,y$, by Lemma
\ref{lem:concav} (c) it follows that $\boldsymbol{H}(r,\rho\alpha
)>\rho\boldsymbol{H}(r,\alpha)$ for $\rho\in(0,1)$, and thus
\[
\boldsymbol{H}(r,\rho\alpha)-\langle\rho\alpha,\beta(r)\rangle>\rho
\boldsymbol{L}(r,\beta(r)).
\]
Then from \eqref{dual-1}
\begin{align*}
0  &  =\rho\boldsymbol{L}(r,\beta(r))\\
&  <\boldsymbol{H}(r,\rho\alpha)-\langle\rho\alpha,\beta(r)\rangle\\
&  \leq-\inf_{\alpha\in\mathbb{R}^{d}}\left[  \langle\alpha,\beta
(r)\rangle-\boldsymbol{H}(r,\alpha)\right] \\
&  =\boldsymbol{L}(r,\beta(r))\\
&  =0,
\end{align*}
which is a contradiction. This proves (\ref{stricineq}).

Recall that by assumption $J$ is positive definite, and thus in particular
$DJ(r)\neq0$ whenever $r\neq\pi^{\ast}$. Applying \eqref{stricineq} to
$\alpha=-DJ(r)$ (recall $DJ(r)\in\mathcal{H}_{0}$), where $r\neq\pi^{\ast}$,
we get
\[
0\leq\boldsymbol{H}(r,-DJ(r))<-\langle DJ(r),\beta(r)\rangle+\boldsymbol{L}%
(r,\beta(r))=-\langle DJ(r),\beta(r)\rangle,
\]
where the first equality is a consequence of the fact that $J$ is a
subsolution of \eqref{eq:statpde} on $\mathbb{D}$, while the last equality
follows on noting that $\boldsymbol{L}(r,\beta(r))=0$. Thus $\langle
DJ(r),\beta(r)\rangle<0$ whenever $r\neq\pi^{\ast}$. Finally, note that
\[
\frac{d}{dt}J(p(t))=\langle DJ(p(t)),p(t)\Gamma(p(t))\rangle=\langle
DJ(p(t)),\beta(p(t))\rangle<0,
\]
for all $0\leq t<\tau$ such that $p(t)\neq\pi^{\ast}$, where $\tau=\inf
\{t\geq0:p(t)\in\mathbb{D}^{c}\}$, which establishes the claimed result.
$\rule{0.5em}{0.5em}$

\section{Locally Gibbs Systems}

\label{sec:examples}

The PDE characterization of Section \ref{subs-timedeppde} gives a recipe for
constructing local Lyapunov functions for \eqref{EqLimitKolmogorov}. Although
in general explicit solutions of \eqref{eq:statpde} are not available, there
is an important class of nonlinear Markov processes introduced below for which
solutions to the PDE (\ref{eq:statpde}) can be constructed explicitly, and
which generalizes the class of Gibbs systems.

\begin{definition}
\label{def-lGibbs} A family of transition rate matrices $\{\Gamma
(r)\}_{r\in{\mathcal{S}}}$ on ${\mathcal{X}}$ is said to be \textbf{locally
Gibbs} if the following two properties hold:

\begin{enumerate}
\item[(a)] for each $r \in{\mathcal{S}}$, $\Gamma(r)$ is the rate matrix of an
ergodic Markov chain on ${\mathcal{X}}$, whose stationary distribution we
denote by $\pi(r)$;

\item[(b)] there exists a ${\mathcal{C}}^{1}$ function $U$ on ${\mathcal{S}}$
such that for every $x,y\in{\mathcal{X}}$, $x\neq y$,
\begin{equation}
\frac{\pi(r)_{y}}{\pi(r)_{x}}=\exp\left(  -D_{e_{y}-e_{x}}U(r)\right)  ,
\label{cond-lGibbs}%
\end{equation}
where for $v\in\mathcal{H}_{0}$, $D_{v}U=\langle DU,v\rangle$.
\end{enumerate}

The function $U$ is referred to as the potential associated with the locally
Gibbs family.
\end{definition}

The following result gives a local Lyapunov function for the ODE
\eqref{EqLimitKolmogorov} associated with a locally Gibbs family.

\begin{theorem}
\label{lem-lGibbs0} Suppose the transition rate matrices $\{\Gamma
(r)\}_{r\in{\mathcal{S}}}$ are locally Gibbs with potential function $U$, and
let the function $J$ be defined by
\begin{equation}
J(r)=\sum_{x\in{\mathcal{X}}}r_{x}\log r_{x}+U(r),\quad r\in{\mathcal{S}}.
\label{def-IlGibbs}%
\end{equation}
Then $J$ is a solution to the PDE (\ref{eq:statpde}) on ${\mathcal{S}}^{\circ
}$. Suppose in addition that  Condition \ref{cond:lip} holds and $J$ is positive definite in a relatively open (in
${\mathcal{S}}$) neighborhood of any fixed point $\pi^{\ast}\in{\mathcal{S}%
}^{\circ}$ of the ODE (\ref{EqLimitKolmogorov}). Then $J$ is a local Lyapunov
function for the ODE (\ref{EqLimitKolmogorov}) associated with $\pi^{\ast}$
and the neighborhood.
\end{theorem}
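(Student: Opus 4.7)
The plan is to show directly that the given $J$ is in fact a solution (not merely a subsolution) of the stationary PDE \eqref{eq:statpde} on $\mathcal{S}^{\circ}$, after which the local Lyapunov property follows immediately from Theorem \ref{prop:lyap} once we observe that $J$ inherits the requisite regularity from $U$ and from the entropy term. The only non-trivial ingredient is a calculation combining the locally Gibbs relation with the invariance identity $\pi(r)\Gamma(r)=0$.

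First I would compute the tangential gradient of $J$ on $\mathcal{S}^{\circ}$. Since $U$ is $\mathcal{C}^{1}$ and $r\mapsto\sum_{x}r_{x}\log r_{x}$ is smooth on $\mathcal{S}^{\circ}$, $J$ is $\mathcal{C}^{1}$ on $\mathcal{S}^{\circ}$. For any $v\in\mathcal{H}_{0}$, using $\sum_{x}v_{x}=0$,
\[
\langle DJ(r),v\rangle \;=\; \sum_{x\in\mathcal{X}}v_{x}\log r_{x}\,+\,D_{v}U(r).
\]
Applying this to $v=e_{y}-e_{x}$ and invoking the defining property \eqref{cond-lGibbs} of a locally Gibbs family yields the key identity
\[
\langle DJ(r),e_{y}-e_{x}\rangle \;=\; \log\!\left(\frac{r_{y}}{r_{x}}\right)-\log\!\left(\frac{\pi(r)_{y}}{\pi(r)_{x}}\right),\qquad r\in\mathcal{S}^{\circ}.
\]

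Next I would substitute this into \eqref{eq-Hfnsimp}. Since $\boldsymbol{H}(r,-DJ(r))=-\sum_{x\neq y}r_{x}\Gamma_{xy}(r)\bigl[e^{\langle DJ(r),e_{y}-e_{x}\rangle}-1\bigr]$ and the identity above gives $e^{\langle DJ(r),e_{y}-e_{x}\rangle}=\tfrac{r_{y}\pi(r)_{x}}{r_{x}\pi(r)_{y}}$, one obtains
\[
\boldsymbol{H}(r,-DJ(r))\;=\;\sum_{x\neq y}r_{x}\Gamma_{xy}(r)\;-\;\sum_{x\neq y}\frac{r_{y}\pi(r)_{x}}{\pi(r)_{y}}\Gamma_{xy}(r).
\]
The second sum, after swapping the summation labels $(x,y)\leftrightarrow(y,x)$, equals
\[
\sum_{y}\frac{r_{y}}{\pi(r)_{y}}\sum_{x\neq y}\pi(r)_{x}\Gamma_{xy}(r)\;=\;\sum_{y}\frac{r_{y}}{\pi(r)_{y}}\bigl(-\pi(r)_{y}\Gamma_{yy}(r)\bigr)\;=\;\sum_{y}r_{y}\sum_{x\neq y}\Gamma_{yx}(r),
\]
where in the first equality I used $\pi(r)\Gamma(r)=0$ and in the second the fact that $\Gamma(r)$ has zero row sums. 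This last expression coincides with the first sum $\sum_{x\neq y}r_{x}\Gamma_{xy}(r)$, so $\boldsymbol{H}(r,-DJ(r))=0$, proving that $J$ solves \eqref{eq:statpde} on $\mathcal{S}^{\circ}$.

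Finally, given a fixed point $\pi^{\ast}\in\mathcal{S}^{\circ}$ and a relatively open neighborhood $\mathbb{D}\subset\mathcal{S}^{\circ}$ of $\pi^{\ast}$ on which $J$ is positive definite, $J$ is in particular a $\mathcal{C}^{1}$ positive definite subsolution of \eqref{eq:statpde} on $\mathbb{D}$. Uniform continuity of $J$ on $\mathbb{D}$ (which one may arrange by shrinking $\mathbb{D}$ so that its closure lies in $\mathcal{S}^{\circ}$, on which $J$ is continuous) combined with Condition \ref{cond:lip} allows direct application of Theorem \ref{prop:lyap}, yielding the local Lyapunov function property. The only subtle point in the plan is the index swap combined with the invariance of $\pi(r)$; beyond that, the argument is purely a verification followed by invocation of the PDE criterion.
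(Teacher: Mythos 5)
Your proof is correct and follows the paper's argument essentially verbatim: use the locally Gibbs relation to compute $e^{\langle DJ(r),e_y-e_x\rangle}=\frac{r_y\pi(r)_x}{r_x\pi(r)_y}$, substitute into $\boldsymbol{H}$, and collapse the two resulting sums to zero by stationarity of $\pi(r)$ and the zero-row-sum property of $\Gamma(r)$, then invoke Theorem \ref{prop:lyap}. One narrative slip worth fixing: passing from $\sum_{x\neq y}\frac{r_y\pi(r)_x}{\pi(r)_y}\Gamma_{xy}(r)$ to $\sum_y\frac{r_y}{\pi(r)_y}\sum_{x\neq y}\pi(r)_x\Gamma_{xy}(r)$ is merely a regrouping of the double sum into an iterated sum and involves no relabeling; the swap $(x,y)\leftrightarrow(y,x)$ you mention would instead produce $\sum_x\frac{r_x}{\pi(r)_x}\sum_{y\neq x}\pi(r)_y\Gamma_{yx}(r)$, which is equal by relabeling but is not the expression you displayed. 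Your closing remark about shrinking $\mathbb{D}$ so that $\bar{\mathbb{D}}\subset\mathcal{S}^{\circ}$ to secure uniform continuity (required by Definition \ref{loclyap}) is a legitimate point that the paper leaves implicit.
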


\begin{proof}
Let $\{\Gamma(r)\}_{r\in{\mathcal{S}}}$, $U$ and $J$ be as in the statement of
the theorem. Let $\{\pi(r)\}_{r\in\mathcal{S}}$ be the corresponding
collection of stationary distributions on $\mathcal{S}$. Since $U$ is
${\mathcal{C}}^{1}$ on ${\mathcal{S}}^{\circ}$ by assumption, $J$ is clearly
also ${\mathcal{C}}^{1}$ on ${\mathcal{S}}^{\circ}$. We now show that $J$ is a
solution to the equation (\ref{eq:statPDE}). First note that, due to the
locally Gibbs condition (\ref{cond-lGibbs}), for $r\in{\mathcal{S}}$ and
$x,y\in{\mathcal{X}}$, $x\neq y$,
\[
e^{D_{e_{y}-e_{x}}J(r)}=\frac{r_{y}\pi(r)_{x}}{r_{x}\pi(r)_{y}}.
\]
Moreover, since $\pi(r)$ is the stationary distribution for the Markov chain
with transition rate matrix $\Gamma(r)$, for any $y\in{\mathcal{X}}$,
\[
\sum_{x\in{\mathcal{X}}:x\neq y}\pi(r)_{x}\Gamma_{xy}(r)=-\pi(r)_{y}%
\Gamma_{yy}(r).
\]
Therefore,
\begin{align*}
-\boldsymbol{H}(r,-DJ(r))  &  =\sum_{x,y\in{\mathcal{X}}:x\neq y}%
[e^{D_{e_{y}-e_{x}}J(r)}-1]r_{x}\Gamma_{xy}(r)\\
&  =\sum_{x,y\in{\mathcal{X}}:x\neq y}\frac{r_{y}\pi(r)_{x}-r_{x}\pi_{y}%
(r)}{\pi(r)_{y}}\Gamma_{xy}(r)\\
&  =\sum_{y\in{\mathcal{X}}}\frac{r_{y}}{\pi(r)_{y}}\sum_{x\in{\mathcal{X}%
}:x\neq y}\pi(r)_{x}\Gamma_{xy}(r)-\sum_{x\in{\mathcal{X}}}r_{x}\sum
_{y\in{\mathcal{X}}:y\neq x}\Gamma_{xy}(r)\\
&  =-\sum_{y\in{\mathcal{X}}}r_{y}\Gamma_{yy}(r)+\sum_{x\in{\mathcal{X}}}%
r_{x}\Gamma_{xx}(r)\\
&  =0.
\end{align*}
Thus $J$ solves the PDE (\ref{eq:statpde}) on ${\mathcal{S}}^{\circ}$. The
result now follows from Theorem \ref{prop:lyap}.
\end{proof}

\medskip In the rest of this section we will describe several examples that
correspond to locally Gibbs systems and also give an example that falls
outside this category, and show that for the latter setting in some cases, the
PDE (\ref{eq:statpde}) can still be used to construct local Lyapunov
functions. In what follows we will not discuss  the positive definiteness
property, and instead refer to a function that satisfies (\ref{eq:statPDE}) as
a \emph{candidate} Lyapunov function, with the understanding that if positive
definiteness is added such a function will in fact be a local Lyapunov function.

The rest of the section is organized as follows. Section \ref{ex-gibbs}
considers a class of models that are a slight extension of the Gibbs systems
studied in Section \ref{SectGibbs}. A particular case of locally Gibbs that
appears in several contexts is introduced and discussed in Section
\ref{subs-gGibbs}. Section \ref{subs-lGibbs1} presents two examples of
three-dimensional systems which in particular illustrate that Gibbs systems
are a strict subset of locally Gibbs systems. In Section \ref{sec:nearnbr} we
consider models with nearest neighbor transitions. Section \ref{subs-lGibbs2}
studies an example from telecommunications \cite{AntFriRobTib08} for which the
associated $N$-particle system has the feature of \textquotedblleft
simultaneous jumps.\textquotedblright\ We show that an explicit construction
of a Lyapunov function carried out in \cite{AntFriRobTib08} follows as a
special case of Theorem \ref{lem-lGibbs0}. All examples in Sections
\ref{ex-gibbs}-\ref{subs-lGibbs2} are locally Gibbs systems. Section
\ref{subs-nlgibbs} considers an example that demonstrates that the class of
models for which a non-trivial solution to the PDE (\ref{eq:statpde}) can be
obtained is strictly larger than that of locally Gibbs systems.

\subsection{Gibbs systems}

\label{ex-gibbs}

Recall the empirical measure functional $r^{N}:{\mathcal{X}}^{N}%
\rightarrow{\mathcal{S}}$ defined in (\ref{def-rn}). Also recall that
throughout we assume $r\mapsto\Gamma(r)$ is Lipschitz continuous. We now
introduce a class of models that slightly extend those studied in Section
\ref{SectGibbs} which, with an abuse of terminology, we once more refer to as
Gibbs systems.

\begin{definition}
\label{def:gibbsdef} Let $K:\mathcal{X}\times\mathbb{R}^{d}\rightarrow
\mathbb{R}$ be such that for each $x\in\mathcal{X}$, $K(x,\cdot)=K^{x}(\cdot)$
is a continuously differentiable function on $\mathbb{R}^{d}$. We say a family
of rate matrices $\{\Gamma(r)\}_{r\in\mathcal{S}}$ on $\mathcal{X}$ is
\textbf{Gibbs} with potential function $K$, if

\begin{enumerate}
\item[(a)] For each $r\in\mathcal{S}$, $\Gamma(r)$ is a rate matrix of an
ergodic Markov chain with state space $\mathcal{X}$.

\item[(b)] For each $N\in\mathbb{N}$ there exists a collection of rate
matrices $\{\Gamma^{N}(r)\}_{r\in\mathcal{S}}$ such that $\Gamma
^{N}\rightarrow\Gamma$ uniformly on $\mathcal{S}$ and the $N$-particle Markov
process $\boldsymbol{X}^{N}$, for which the jump rate of the transition
\[
\left(  x_{1},\ldots,x_{i-1},x_{i},x_{i+1},\ldots,x_{N}\right)  \mapsto\left(
x_{1},\ldots,x_{i-1},y,x_{i+1},\ldots,x_{N}\right)
\]
is $\Gamma_{x_{i}y}^{N}(r^{N}(\boldsymbol{x}))$, is reversible with unique
invariant measure
\begin{equation}
\boldsymbol{\pi}^{N}(\boldsymbol{x})=\frac{1}{Z_{N}}\exp\left(  -\sum
_{i=1}^{N}K(x_{i},r^{N}(\boldsymbol{x}))\right)  ,\qquad\boldsymbol{x}%
\in{\mathcal{X}}^{N}, \label{stat-pin}%
\end{equation}
where $Z_{N}$ is the normalization constant:
\[
Z_{N}=\sum_{\boldsymbol{x}\in{\mathcal{X}}^{N}}\exp\left(  -\sum_{i=1}%
^{N}K(x_{i},r^{N}(\boldsymbol{x}))\right)  .
\]

\end{enumerate}
\end{definition}

From Section 4 of \cite{BuDuFiRa1} it follows that the family of rate matrices
in equation \eqref{eq:eqlimgen} is Gibbs in the sense of Definition
\ref{def:gibbsdef}. Note however that Definition \ref{def:gibbsdef} allows for
more general forms of rate matrices than \eqref{eq:eqlimgen}.

The following lemma shows that a Gibbs system is locally Gibbs in the sense
of Definition \ref{def-lGibbs}. 

\begin{lemma}
\label{lem-gibbslgibbs} If $\{\Gamma(r)\}_{r\in\mathcal{S}}$ is Gibbs with
some potential $K$, then it is locally Gibbs with potential $U(r)=\sum
_{z\in\mathcal{X}}K^{z}(r)r_{z}$.
\end{lemma}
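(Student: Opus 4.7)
My plan is to extract a detailed balance relation for the limiting rate matrix $\Gamma(r)$ from the reversibility of $\boldsymbol{X}^N$ with respect to $\boldsymbol{\pi}^N$, with $U(r) \doteq \sum_z K^z(r) r_z$ playing the role of an effective potential. Once this is in hand, condition \eqref{cond-lGibbs} on the stationary distribution $\pi(r)$ will follow from uniqueness of the invariant probability measure of an ergodic chain. Note that since each $K^z$ is $\mathcal{C}^1$ on $\mathbb{R}^d$, the formula for $U$ defines a $\mathcal{C}^1$ function on all of $\mathbb{R}^d$ (in particular on $\mathcal{S}$).

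First, reversibility of $\boldsymbol{X}^N$ at a single-coordinate flip gives, for $\boldsymbol{x} \in \mathcal{X}^N$, $i \in \{1,\ldots,N\}$ and $y \neq x_i$,
\begin{equation*}
\boldsymbol{\pi}^N(\boldsymbol{x})\,\Gamma^N_{x_i y}(r^N(\boldsymbol{x})) = \boldsymbol{\pi}^N(\boldsymbol{x}^{i,y})\,\Gamma^N_{y x_i}(r^N(\boldsymbol{x}^{i,y})),
\end{equation*}
where $\boldsymbol{x}^{i,y}$ is $\boldsymbol{x}$ with its $i$-th coordinate replaced by $y$. The key algebraic collapse is the identity $\sum_{j=1}^N K(x_j, r^N(\boldsymbol{x})) = N\sum_z K^z(r^N(\boldsymbol{x}))\, r^N(\boldsymbol{x})_z = N\, U(r^N(\boldsymbol{x}))$, so by \eqref{stat-pin},
\begin{equation*}
\frac{\boldsymbol{\pi}^N(\boldsymbol{x}^{i,y})}{\boldsymbol{\pi}^N(\boldsymbol{x})} = \exp\bigl(-N\bigl[U(r^N(\boldsymbol{x}^{i,y})) - U(r^N(\boldsymbol{x}))\bigr]\bigr).
\end{equation*}
Now fix $r \in \mathcal{S}^\circ$ and $x \neq y$, and choose $\boldsymbol{x}_N \in \mathcal{X}^N$ with $(\boldsymbol{x}_N)_{i_N} = x$ and $r^N(\boldsymbol{x}_N) \to r$, which is possible because $r$ lies in the relative interior. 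Since $r^N(\boldsymbol{x}_N^{i_N,y}) - r^N(\boldsymbol{x}_N) = N^{-1}(e_y - e_x)$, a first-order Taylor expansion of $U$ gives $-N\bigl[U(r^N(\boldsymbol{x}_N^{i_N,y})) - U(r^N(\boldsymbol{x}_N))\bigr] \to -D_{e_y - e_x} U(r)$. Combining this with the uniform convergence $\Gamma^N \to \Gamma$ and continuity of $\Gamma$, the pre-limit detailed balance passes to the limit:
\begin{equation*}
\Gamma_{xy}(r) = \Gamma_{yx}(r)\exp\bigl(-D_{e_y - e_x} U(r)\bigr), \qquad x \neq y,\; r \in \mathcal{S}^\circ.
\end{equation*}

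To conclude I produce a reversible measure for $\Gamma(r)$ by ansatz: set $\nu_x(r) \doteq \exp(-\partial_{p_x} U(r))$, interpreting $U$ as a function on $\mathbb{R}^d$ so that the coordinate partials are unambiguous. Since $e_y - e_x \in \mathcal{H}_0$, one has $\partial_{p_y} U(r) - \partial_{p_x} U(r) = D_{e_y - e_x} U(r)$, and substituting into the displayed limit identity immediately gives $\nu_x(r)\Gamma_{xy}(r) = \nu_y(r)\Gamma_{yx}(r)$ for every $x \neq y$. Thus the normalized $\nu(r)$ is a reversible, hence stationary, probability measure for $\Gamma(r)$. By ergodicity of $\Gamma(r)$, uniqueness forces $\pi(r) \propto \nu(r)$, and therefore
\begin{equation*}
\frac{\pi(r)_y}{\pi(r)_x} = \frac{\nu_y(r)}{\nu_x(r)} = \exp\bigl(-D_{e_y - e_x} U(r)\bigr),
\end{equation*}
which is precisely \eqref{cond-lGibbs}. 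The only real technical point is the passage from the discrete $N$-particle balance to its infinitesimal limit, where the uniform convergence $\Gamma^N \to \Gamma$ and the $\mathcal{C}^1$ smoothness of $U$ jointly justify the Taylor expansion; everything else is bookkeeping and a one-line uniqueness argument.
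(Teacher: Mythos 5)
Your proof is correct and follows essentially the same route as the paper: pass to the limit in the $N$-particle detailed balance, use $\mathcal{C}^1$-smoothness of $K$ to extract a Taylor expansion, and identify the invariant measure by the resulting detailed balance for $\Gamma(r)$. Your algebraic shortcut $\sum_{j=1}^N K(x_j, r^N(\boldsymbol{x})) = N\,U(r^N(\boldsymbol{x}))$ is a slightly cleaner bookkeeping device than the paper's term-by-term computation with $K$, and your $\nu_x(r) = \exp(-\partial_{p_x} U(r))$ is identical to the paper's $\pi(r)_x \propto \exp(-H^x(r))$ since $\partial_{p_x} U(r) = H^x(r)$ by \eqref{def-H}; the restriction to $r \in \mathcal{S}^\circ$ is unnecessary (the sequence $\boldsymbol{x}_N$ with $(\boldsymbol{x}_N)_{i_N}=x$ and $r^N(\boldsymbol{x}_N)\to r$ can be built for any $r\in\mathcal{S}$, or alternatively one extends by continuity), but this is a cosmetic point, not a gap.
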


\begin{proof}
Since $\boldsymbol{X}^{N}$ is reversible, the following detailed balance
condition on ${\mathcal{X}}^{N}$ must hold:
\begin{equation}
\boldsymbol{\pi}^{N}({\mathbf{x}})\Gamma_{x_{j}y}(r^{N}({\mathbf{x}%
}))=\boldsymbol{\pi}^{N}\left(  T_{y}^{j}{\mathbf{x}}\right)  \Gamma_{yx_{j}%
}(r^{N}(T_{y}^{j}{\mathbf{x}})) \label{balance}%
\end{equation}
for every ${\mathbf{x}}\in{\mathcal{X}}^{N}$, $y\in{\mathcal{X}}$ and
$j\in\{1,\ldots,N\}$, where $T_{y}^{j}{\mathbf{x}}$ has $j$th coordinate value
equal to $y$, and all other coordinates having values identical to those of
${\mathbf{x}}$. Since $r^{N}(T_{y}^{j}{\mathbf{x}})=r^{N}({\mathbf{x}}%
)+\frac{1}{N}(e_{y}-e_{x_{j}})$, by (\ref{stat-pin}) and (\ref{balance}), it
follows that for ${\mathbf{x}}\in{\mathcal{X}}^{N}$,
\begin{align}
&  \exp\left[  -\sum_{i=1}^{N}K(x_{i},r^{N}({\mathbf{x}}))+\sum_{i=1}%
^{N}K\left(  x_{i},r^{N}({\mathbf{x}})+\frac{1}{N}(e_{y}-e_{x_{j}})\right)
\right. \nonumber\\
&  \quad\left.  +K\left(  y,r^{N}(T_{y}^{j}{\mathbf{x}}))-K(x_{j}%
,r^{N}({\mathbf{x}})\right)  \rule{0pt}{22pt}\right]  \Gamma_{x_{j}y}%
^{N}(r^{N}({\mathbf{x}}))=\Gamma_{yx_{j}}^{N}(r^{N}(T_{y}^{j}{\mathbf{x}})).
\label{eq:detbalpre}%
\end{align}
Fix $x,y\in\mathcal{X}$, $x\neq y$ and $j\in\mathbb{N}$. Given $r\in
\mathcal{S}$, let $\{x_{1},x_{2},\cdots\}$ be a sequence in $\mathcal{X}$ such
that $x_{j}=x$ and with $\mathbf{x}^{N}=(x_{1},\ldots,x_{N})$, $r^{N}%
({\mathbf{x}}^{N})\rightarrow r$ as $N\rightarrow\infty$. Since $K$ is
continuously differentiable, as $N\rightarrow\infty$%
\begin{align*}
&  \sum_{i=1}^{N}K\left(  x_{i},r^{N}({\mathbf{x}}^{N})+\frac{1}{N}%
(e_{y}-e_{x})\right)  -\sum_{i=1}^{N}K(x_{i},r^{N}({\mathbf{x}}^{N}))\\
&  \quad=N\sum_{z\in\mathcal{X}}\left(  K\left(  z,r^{N}({\mathbf{x}}%
^{N})+\frac{1}{N}(e_{y}-e_{x})\right)  -K(z,r^{N}({\mathbf{x}}^{N}))\right)
r_{z}^{N}({\mathbf{x}}^{N})\\
&  \quad\rightarrow\sum_{z\in\mathcal{X}}\left(  \frac{\partial}{\partial
r_{y}}K(z,r)-\frac{\partial}{\partial r_{x}}K(z,r)\right)  r_{z}.
\end{align*}
Now, for $r\in{\mathcal{S}}$, define
\[
\pi(r)_{x}\doteq\frac{1}{Z(r)}\exp\left(  -H^{x}(r)\right)  ,\quad
r\in{\mathcal{X}},
\]
where $H$ was defined in \eqref{def-H} and $Z(r)$ is a normalization constant
to make $\pi(r)$ a probability measure. By sending $N\rightarrow\infty$ in
\eqref{eq:detbalpre} (with ${\mathbf{x}}$ replaced by ${\mathbf{x}}^{N}$),
using the uniform convergence of $\Gamma^{N}$ to $\Gamma$ and the fact that
$K$ is ${\mathcal{C}}^{1}$, we have
\begin{equation}
\frac{\pi(r)_{x}}{\pi(r)_{y}}\Gamma_{xy}(r)=\Gamma_{yx}(r),\quad
x,y\in{\mathcal{X}},x\neq y. \label{eq:newdetbal}%
\end{equation}
This shows that $\pi(r)$ is the stationary distribution for the rate matrix
$\Gamma(r)$, and thus verifies condition (a) of Definition \ref{def-lGibbs}.
Condition (b) holds because $U$ is ${\mathcal{C}}^{1}$ due to the assumptions
on $K$, and \eqref{cond-lGibbs} is verified by combining the last display with
the fact that $U(r)=\sum_{z\in\mathcal{X}}K^{z}(r)r_{z}$ and
(\ref{ExLimitStationary}) imply $-\langle DU(r),e_{y}-e_{x}\rangle=\log \pi(r)_{y}-\log \pi(r)_{x}$, $x,y\in{\mathcal{X}},x\neq y$. Thus, the
family $\{\Gamma(r)\}_{r\in{\mathcal{S}}}$ is locally Gibbs with potential $U$.
\end{proof}


\medskip Given a Gibbs family of matrices $\{\Gamma(r)\}_{r\in{\mathcal{S}}}$, it follows from Lemma
\ref{lem-gibbslgibbs} that the function $J:{\mathcal{S}}\rightarrow
\lbrack0,\infty)$ defined in (\ref{def-IlGibbs}) solves the stationary PDE
\eqref{eq:statpde} and thus serves as a candidate Lyapunov function. Example
\ref{ExmplMultEquilibria} shows that in general multiple fixed points of the
forward equation \eqref{EqLimitKolmogorov} exist and that the function $J$ may
be positive definite in the sense of Definition 
\ref{posdef} for
some of the fixed points and not positive definite for others.

The \emph{locally Gibbs} condition is significantly weaker than the Gibbs
property. Indeed, it follows from \eqref{eq:newdetbal} that Gibbs systems
satisfy the detailed balance condition for their corresponding rate matrices,
but systems with the locally Gibbs property need not satisfy this property.
The simplest example is as follows. Let $\pi$ be the invariant distribution
for the ergodic rate matrix $\Gamma$, and assume that detailed balance does
\emph{not} hold, so that it cannot be a Gibbs family. However, it is still
locally Gibbs, with $U(r)=\left\langle r,v\right\rangle ,v_{x}=-\log\pi_{x}$.
Note that in this case, the proposed Lyapunov function $J(r)$ in Theorem
\ref{lem-lGibbs0} is just the relative entropy $R(r\left\Vert \pi\right.  )$.
Example \ref{ex:5.4} below will also illustrate this point.

\subsection{A class of locally Gibbs systems}

\label{subs-gGibbs} We now introduce a family of ergodic rate matrices
$\{\Gamma(r)\}_{r\in{\mathcal{S}}}$ that describe limits of particle systems
whose dynamics need not be reversible for each $N$, (and hence may not be
Gibbs systems), but nevertheless have a structure that has some similarities
with Gibbs systems. We show that they are locally Gibbs, and then give two
concrete examples where they arise.

\begin{condition}
\label{def-gGibbs} For a family of transition rate matrices $\{\Gamma
(r)\}_{r\in{\mathcal{S}}}$ on ${\mathcal{X}}$ the following two properties hold.

\begin{enumerate}

\item[(a)] For each $r\in{\mathcal{S}}$, $\Gamma(r)$ is the rate matrix of an
ergodic Markov chain on ${\mathcal{X}}$ with stationary distribution $\pi
(r)$.

\item[(b)] There exist $R:{\mathcal{X}}\times\lbrack0,1]\rightarrow\mathbb{R}$
and $K:{\mathcal{X}}\times\mathbb{R}^{d}\rightarrow\mathbb{R}$ such that for
each $x\in{\mathcal{X}}$, $R^{x}(\cdot)=R(x,\cdot)$ is a continuous function
and $K^{x}(\cdot)=K(x,\cdot)$ is a $\mathcal{C}^{1}$ function on
${\mathcal{S}}$, and such that for each $r\in{\mathcal{S}}$, $\pi(r)$ has the
form
\begin{equation}
\pi(r)_{x}=\frac{\exp[-H(x,r)-R(x,r_{x})]}{Z(r)},\quad x\in{\mathcal{X}},
\label{stat-gGibbs}%
\end{equation}
where $H$ is defined in terms of $K$ as in \eqref{def-H},
and $Z(r)$ is, as usual, the normalization constant
\[
Z(r)=\sum_{x\in{\mathcal{X}}}\exp[-H(x,r)-R(x,r_{x})].
\]

\end{enumerate}


\end{condition}

Note that the Gibbs systems from Section \ref{ex-gibbs} satisfy Condition \ref{def-gGibbs} with
$R(x,t)=1$, $(x,t)\in \mathcal{X}\times [0,1]$. 

\begin{remark}
\label{rem-gGibbs} \emph{ Given a family $\{\Gamma(r)\}_{r\in{\mathcal{S}}}$,
suppose there exist $R$ and $K$ as in Condition \ref{def-gGibbs} such that for
every $x,y\in{\mathcal{S}}$, $x\neq y$,}
\begin{equation}
\exp[H(y,r)+R(y,r_{y})]\Gamma_{xy}(r)=\exp[H(x,r)+R(x,r_{x})]\Gamma
_{yx}(r).\label{cond-gGibbs}%
\end{equation}
\emph{Then, for fixed $r \in \mathcal{S}$,  $\Gamma(r)$ satisfies the detailed balance conditions with
stationary distribution $\pi(r)$ given by \eqref{stat-gGibbs}, and so
$\{\Gamma(r)\}_{r\in{\mathcal{S}}}$ satisfies Condition \ref{def-gGibbs}.}
\end{remark}


\begin{lemma}
\label{lem-gGibbs} Let $\{\Gamma(r)\}_{r\in{\mathcal{S}}}$ satisfy Condition
\ref{def-gGibbs}. Then $\{\Gamma(r)\}_{r\in{\mathcal{S}}}$ is locally Gibbs
with potential
\begin{equation}
U(r)=\sum_{z\in{\mathcal{X}}}\left[  \int_{0}^{r_{z}}R(z,w)\,dw+K(z,r)r_{z}%
\right]  . \label{pot-gGibbs}%
\end{equation}

\end{lemma}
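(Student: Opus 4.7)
The plan is to verify directly the two conditions in Definition \ref{def-lGibbs}. Condition (a) is already built into Condition \ref{def-gGibbs}(a), so only condition (b) requires work: we must show that $U$ defined in \eqref{pot-gGibbs} is $\mathcal{C}^1$ on $\mathcal{S}$ and that
\[
\frac{\pi(r)_y}{\pi(r)_x} = \exp\bigl(-D_{e_y - e_x} U(r)\bigr) \quad \text{for all } x \neq y.
\]

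First I would check the regularity of $U$. The first summand $\int_0^{r_z} R(z,w)\,dw$ is $\mathcal{C}^1$ in $r_z$ since $R(z,\cdot)$ is continuous (fundamental theorem of calculus), and the second summand $K(z,r)r_z$ is $\mathcal{C}^1$ in $r$ because $K(z,\cdot)$ is assumed $\mathcal{C}^1$. Then I would compute the partial derivatives of $U$ viewed as a function on a neighborhood of $\mathcal{S}$ in $\mathbb{R}^d$. For the first piece, $\partial/\partial r_y$ gives $R(y, r_y)$. For the second piece, the key observation is that by definition \eqref{def-H},
\[
H(y,r) = \frac{\partial}{\partial r_y}\left(\sum_{z \in \mathcal{X}} K(z,r) r_z\right),
\]
so the partial of the second summand is exactly $H(y,r)$. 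Combining, $\partial U / \partial r_y = R(y, r_y) + H(y,r)$.

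Consequently the tangential derivative along $e_y - e_x \in \mathcal{H}_0$ is
\[
D_{e_y - e_x} U(r) = [R(y,r_y) + H(y,r)] - [R(x,r_x) + H(x,r)].
\]
On the other hand, from the explicit form \eqref{stat-gGibbs} of $\pi(r)$, the normalization constant $Z(r)$ cancels in the ratio and we obtain
\[
\frac{\pi(r)_y}{\pi(r)_x} = \exp\bigl(-[H(y,r)+R(y,r_y)] + [H(x,r)+R(x,r_x)]\bigr),
\]
which is precisely $\exp(-D_{e_y - e_x}U(r))$. This verifies \eqref{cond-lGibbs}, completing the proof.

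There is no real obstacle here; the proof is essentially a bookkeeping exercise in which the ``$R$'' contribution is handled by the fundamental theorem of calculus and the ``$K$'' contribution is handled by recognizing $H$ as the partial derivative of $\sum_z K(z,r) r_z$. The only point to be careful about is that partial derivatives with respect to individual coordinates $r_y$ are used even though $r$ lives in the simplex; this is legitimate because we can extend $U$ to a neighborhood of $\mathcal{S}$ in $\mathbb{R}^d$ and use that the directional derivative $D_{e_y - e_x}U(r)$ depends only on the direction $e_y - e_x \in \mathcal{H}_0$.
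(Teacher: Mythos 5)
Your proof is correct and follows essentially the same route as the paper: both verify condition (b) of Definition \ref{def-lGibbs} by computing $\partial U/\partial r_x = R(x,r_x) + H(x,r)$ via the fundamental theorem of calculus for the $R$-term and the identity $\partial_{r_x}\bigl(\sum_z K(z,r)r_z\bigr) = H(x,r)$ for the $K$-term, then comparing the resulting directional derivative with the ratio $\pi(r)_y/\pi(r)_x$ from \eqref{stat-gGibbs}. You have merely spelled out the bookkeeping that the paper compresses into a single sentence.
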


\begin{proof}
First, note that the conditions on $R$ and $K$ ensure that $U$ is a
${\mathcal{C}}^{1}$ function on ${\mathcal{S}}$. Thus, it suffices to verify
equation \eqref{cond-lGibbs} of Definition \ref{def-lGibbs}, namely to show
that for every $r\in{\mathcal{S}}$ and $x,y\in{\mathcal{X}}$,
\[
-\log\left(  \frac{\pi(r)_{y}}{\pi(r)_{x}}\right)  =D_{e_{y}-e_{x}}U(r).
\]
But this is a simple consequence of the identity
\[
\frac{\partial}{\partial r_{x}}\left[  \sum_{z\in{\mathcal{X}}}\int_{0}%
^{r_{z}}R(z,w)dw\right]  =R(x,r_{x}),\quad x\in{\mathcal{X}},
\]
the fact that from \eqref{def-H} we have
\[
\frac{\partial}{\partial r_{x}}\left(  \sum_{z\in{\mathcal{X}}}K(z,r)r_{z}%
\right)  =H(x,r),\quad x\in{\mathcal{X}},%
\]
and the definitions of $\pi(r)$ and $U$ in \eqref{stat-gGibbs} and
\eqref{pot-gGibbs}, respectively.
\end{proof}

\medskip We now provide two classes of models that satisfy Condition
\ref{def-gGibbs}. The first class is a system with only nearest-neighbor jumps.

\begin{example}
\label{gennnb} \emph{ Let $a_{i}$, $i=1,2,\ldots,d-1,$ and $b_{i}$,
$i=2,3,\ldots,d,$ be continuous maps from $\mathcal{S}$ to $(0,\infty)$.
Suppose that for $r\in\mathcal{S}$, $\Gamma(r)$ is associated with a birth
death chain as follows:
\begin{align*}
\Gamma_{i,i+1}(r)  &  =a_{i}(r),\;\;i=1,2,\ldots,d-1,\\
\Gamma_{i,i-1}(r)  &  =b_{i}(r),\;\;i=2,3,\ldots,d,\\
\Gamma_{i,j}(r)  &  =0,\;\;\mbox{ for all other }i\neq j.
\end{align*}
As usual, set $\Gamma_{ii}(r)=-\sum_{j,j\neq i}\Gamma_{ij}(r)$, so that
$\Gamma(r)$ is a rate matrix. }

\emph{Denoting by $\pi(r)$ the stationary distribution associated with
$\Gamma(r)$, the $\{\pi(r)_{i}\}_{i=1,\ldots,d}$ satisfy
\[
\pi(r)_{j}a_{j}(r)=\pi(r)_{j+1}b_{j+1}(r),\;\;\mbox{ for all }j=1,2,\ldots
,d-1.
\]
The following is a sufficient condition for Condition \ref{def-gGibbs}.
Suppose that there are measurable functions $\psi_{i}:\mathcal{S}%
\rightarrow(0,\infty)$, $i=0,\ldots,d-1$, that are bounded away from $0$, and,
for $i=1,\ldots,d$, continuous functions $\phi_{i}:[0,1]\rightarrow(0,\infty
)$, such that}
\begin{equation}
a_{i}(r)=\psi_{i}(r)\phi_{i}(r_{i}),\;\;b_{i}(r)=\psi_{i-1}(r)\phi_{i}%
(r_{i}),\;\;i=1,\ldots,d,\;r\in\mathcal{S}.\label{eq:nnbcond}%
\end{equation}
\emph{ Then, for $j=1,\ldots,d-1$,}
\[
\frac{\pi(r)_{j}}{\pi(r)_{j+1}}=\frac{b_{j+1}(r)}{a_{j}(r)}=\frac{\psi
_{j}(r)\phi_{j+1}(r_{j+1})}{\psi_{j}(r)\phi_{j}(r_{j})}=\frac{\phi
_{j+1}(r_{j+1})}{\phi_{j}(r_{j})}.
\]
\emph{It follows that $\{\Gamma(r)\}_{r\in{\mathcal{S}}}$ satisfies Condition
\ref{def-gGibbs} with $R(i,\cdot)=\log\phi_{i}(\cdot),i=1,\ldots,d,$ and
$K\equiv0$. By Lemma \ref{lem-gGibbs}, it follows that $\{\Gamma
(r)\}_{r\in{\mathcal{S}}}$ is locally Gibbs with potential }
\[
U(r)=\sum_{j=1}^{d}\int_{0}^{r_{j}}\log\phi_{j}(w)dw,\;u\in\lbrack
0,1],r\in{\mathcal{S}}.
\]

\end{example}

\begin{example}
\label{gGibbs-metropolis} \emph{ This example can be viewed as a
generalization of the Glauber dynamics introduced in Section \ref{SectGibbs},
in which the rate at which a particle changes state can depend both on the
state of the particle and on the fraction of particles in that state. Suppose
that we are given $R$ and $K$ as in Definition \ref{def-gGibbs}, let $H$ be
defined as in \eqref{def-H}, and as in Section \ref{SectGibbs}, let
$\Psi:{\mathcal{X}}\times{\mathcal{X}}\times{\mathcal{S}}\rightarrow
\mathbb{R}$ be given by $\Psi(x,y,r)=H^{y}(r)-H^{x}(r)$, $x,y\in{\mathcal{X}}%
$, $r\in{\mathcal{S}}$, and let $(\alpha(x,y))_{x,y\in{\mathcal{X}}}$ be an
irreducible and symmetric matrix with diagonal entries equal to zero and
off-diagonal entries equal to either one or zero. Then, for $r\in{\mathcal{S}%
}$, define }
\[
\Gamma_{xy}(r)=\exp[-\Psi(x,y,r)-R(x,r_{x})]\alpha(x,y),\quad x,y\in
{\mathcal{X}},x\neq y.
\]
\emph{ Then the equality in \eqref{cond-gGibbs} clearly holds and thus
$\{\Gamma(r)\}_{r\in{\mathcal{S}}}$ satisfies Condition \ref{def-gGibbs} by
Remark \ref{rem-gGibbs}. }

\emph{ Next recall that Theorem \ref{lem-lGibbs0} shows that $J(r)=U(r)+\sum
_{x\in{\mathcal{X}}}r_{x}\log r_{x}$, with $U$ defined by \eqref{pot-gGibbs},
is a candidate Lyapunov function for the associated ODE
\eqref{EqLimitKolmogorov}. In the present example, consider the case when
there exists a common $R_{0}:[0,1]\rightarrow\mathbb{R}$ such that
$R(x,\cdot)=R_{0}(\cdot)$ for every $x\in{\mathcal{X}}$ and 
$K(x,r)=-\log(\nu_{x}R(x,\nu_{x}))$ for some probability measure $\nu\in{\mathcal{P}%
}({\mathcal{X}})$ (and hence $K(x,r)$ does not depend on }$r$\emph{). Setting
$\bar{R}_{0}(u)=ue^{R_{0}(u)}$ for $u\in\lbrack0,1]$, we then have (up to a
constant), }
\[
J(r)=\sum_{z\in{\mathcal{X}}}\int_{\nu_{z}}^{r_{z}} \log \left ( \frac{\bar{R}_0(w)}{\bar{R}_0(\nu_z)}\right) dw,
\]
\emph{which is non-negative if $\bar{R}_{0}$ is non-decreasing. An analog of
this functional for nonlinear diffusions living in an open subset $\Omega$ of
a Riemannian manifold appears in \cite{BerDeSGabJoLLan02}, where it was shown
to be equal to the large deviation functional of the so-called zero range
process. Moreover, under the condition that $\bar{R}_{0}$ is strictly
increasing, it was shown in \cite{BodLebMouVil13} that this functional (and a
slight generalization of it, where the logarithm in the integrand is replaced
by the derivative of a more general ${\mathcal{C}}^{2}$ function) serves as a
Lyapunov function for the associated nonlinear PDE. }
\end{example}

\subsection{Some three-dimensional examples}

\label{subs-lGibbs1}

Both  classes of locally Gibbs families studied so far had the property
that for each $r\in{\mathcal{S}}$, $\Gamma(r)$ is associated with a reversible
Markov chain for which the detailed balance condition \eqref{eq:newdetbal}
holds. This leads to two natural questions: (a) does every locally Gibbs family have the property
that $\Gamma(r)$ satisfies detailed balance for each $r\in{\mathcal{S}}$? 
(b) if $\Gamma(r)$ satisfies
detailed balance for each $r\in{\mathcal{S}}$, then does it correspond to a
locally Gibbs family? 
To
address these questions, we consider some simple three-dimensional examples;
specifically, Example \ref{ex:5.4} answers questions (a) in the negative while Example
 \ref{3dnnb} gives a partial answer to (b) by showing that $\Gamma(r)$ may satisfy
detailed balance for each $r\in{\mathcal{S}}$, but it may fail to be a
locally Gibbs family with any $\mathcal{C}^2$-potential.


\begin{example}
\label{3dnnb} \emph{ Suppose that $d=3$, fix $a_{i},b_{j}>0$, $i=1,2,j=2,3$,
and let $B:{\mathcal{S}}\rightarrow(0,\infty)$ be some given function. For
$r\in\mathcal{S}$, consider the matrix
\begin{equation}
\left(
\begin{array}
[c]{ccc}%
-a_{1} & a_{1} & 0\\
b_{2}B(r) & -b_{2}B(r)-a_{2} & a_{2}\\
0 & b_{3} & -b_{3}%
\end{array}
\right)  ,\label{matrix}%
\end{equation}
Note that for any fixed $r\in{\mathcal{S}}$, the matrix (\ref{matrix})
corresponds to an ergodic transition matrix with stationary distribution
\begin{equation}
\pi(r)=\frac{1}{Z(r)}\left(  b_{2}b_{3}B(r),a_{1}b_{3},a_{1}a_{2}\right)
,\label{pip-statmat}%
\end{equation}
where $Z(r)$ is, as usual, the normalization constant. Note also that the
detailed balance condition \eqref{eq:newdetbal} is satisfied for this model.
However as see below, in general this is not locally Gibbs system with a
$\mathcal{C}^{2}$-potential. Let $\Gamma$ be the transition matrix in
(\ref{matrix}) when $B(r)$ is replaced by $1$, and let $r^{\ast}$ be the
associated stationary distribution. Fix $c=(c_{1},c_{2},c_{3})\in
\mathbb{R}^{3}$ and $\kappa\in\mathbb{R}$. Also, for $r\in{\mathcal{S}}$, let
$\Gamma(r)$ be the matrix in (\ref{matrix}) with
\begin{equation}
B(r)=e^{\kappa\left\langle r-r^{\ast},c\right\rangle }.\label{def-bp}%
\end{equation}
A simple calculation shows that if $\kappa\neq0$ and $c_{2}\neq c_{3}$, there
is no $C^{2}$ function $U$ that
satisfies the equality in \eqref{cond-lGibbs} for all $x\neq y$ and
$r\in\mathcal{S}$. Indeed, such a function should satisfy for suitable real
numbers $\alpha,\beta$
\begin{equation}
\langle DU,e_{r_{2}}-e_{r_{1}}\rangle=\kappa\left\langle c,r\right\rangle
+\alpha,\;\langle DU,e_{r_{2}}-e_{r_{3}}\rangle=\beta.\label{eq:cond848}%
\end{equation}
Taking second derivatives we see that
\[
\frac{\partial^{2}U}{\partial r_{2}^{2}}-\frac{\partial^{2}U}{\partial
r_{2}\partial r_{1}}=\kappa c_{2},\;\frac{\partial^{2}U}{\partial
r_{3}\partial r_{2}}-\frac{\partial^{2}U}{\partial r_{3}\partial r_{1}}=\kappa
c_{3}%
\]
and
\[
\frac{\partial^{2}U}{\partial r_{2}^{2}}-\frac{\partial^{2}U}{\partial
r_{2}\partial r_{3}}=0.
\]
Adding the last two equations and subtracting from the first, we have
\[
\frac{\partial}{\partial r_{1}}\left(  \frac{\partial U}{\partial r_{3}}%
-\frac{\partial U}{\partial r_{2}}\right)  =\kappa(c_{2}-c_{3}).
\]
However, from \eqref{eq:cond848} the left side equals $0$. Thus we must have
$c_{2}=c_{3}$ or $\kappa=0$. Consequently when $c_{2}\neq c_{3}$ and
$\kappa\neq0$ the model is not locally Gibbs. On the other hand, if
$c_{2}=c_{3}$, one can check that \eqref{cond-lGibbs} is satisfied with
\begin{equation}
U(r)=\kappa r_{1}\left\langle r^{\ast}-r,c\right\rangle +\log\left(
\frac{a_{1}a_{2}}{b_{2}b_{3}}\right)  r_{1}+\log\left(  \frac{a_{2}}{b_{3}%
}\right)  r_{2}+\frac{1}{2}\kappa r_{1}^{2}(c_{1}-c_{2}).\label{def-Umat}%
\end{equation}
Thus, when $c_{2}=c_{3}$, the model is a locally Gibbs system with potential
$U$.}
\end{example}

\begin{example}
\label{ex:5.4} \emph{Let $d=3$ and for $r\in\mathcal{S}$, define the rate
matrix $\tilde{\Gamma}(r)$ by%
\[
\tilde{\Gamma}(r)=\left(
\begin{array}
[c]{ccc}%
-2r_{2}r_{3} & r_{2}r_{3} & r_{2}r_{3}\\
2r_{1}r_{3} & -4r_{1}r_{3} & 2r_{1}r_{3}\\
0 & 3r_{1}r_{2} & -3r_{1}r_{2}%
\end{array}
\right)  .
\]
Then clearly $r\tilde{\Gamma}(r)=0$. }

\emph{As in Example \ref{3dnnb}, for $r\in{\mathcal{S}}$ let $\Gamma(r)$ be
the matrix defined by (\ref{matrix}) with $B(r)$ given by (\ref{def-bp}), and
let $\pi(r)$ be the associated stationary distribution specified in
(\ref{pip-statmat}). Suppose that $c_{2}=c_{3}=c$. It was noted in Example
\ref{3dnnb} that $\pi(r)$ satisfies (\ref{cond-lGibbs}), with $U$ as in
(\ref{def-Umat}). For $r\in\mathcal{S}$, define
\[
\bar{\Gamma}(r)=\tilde{\Gamma}(\pi(r)),
\]
which takes the explicit form
\[
\bar{\Gamma}(r)=Z^{-1}(r)\left(
\begin{array}
[c]{ccc}%
-2a_{1}^{2}a_{2}b_{3} & a_{1}^{2}a_{2}b_{3} & a_{1}^{2}a_{2}b_{3}\\
2a_{1}a_{2}b_{2}b_{3}B(r) & -4a_{1}a_{2}b_{2}b_{3}B(r) & 2a_{1}a_{2}b_{2}%
b_{3}B(r)\\
0 & 3a_{1}b_{2}b_{3}^{2}B(r) & -3a_{1}b_{2}b_{3}^{2}B(r)
\end{array}
\right)  .
\]
}

\emph{ }

\emph{Since $r\tilde{\Gamma}(r)=0,$ $\pi(r)\bar{\Gamma}(r)=0$. Thus for each
$r\in\mathcal{S}$, $\bar{\Gamma}(r)$ is the rate matrix of an ergodic Markov
chain with stationary distribution $\pi(r)$. Also, as noted earlier, $\pi(r)$
satisfies \eqref{cond-lGibbs}. Thus, the family $\{\bar{\Gamma}(r)\}_{r\in
\mathcal{S}}$ satisfies the local Gibbs property. However, note that the
detailed balance condition \eqref{eq:newdetbal}, which must hold for every
Gibbs model, fails. Indeed,
\[
\frac{\bar{\Gamma}_{12}(r)}{\bar{\Gamma}_{21}(r)}=\frac{a_{1}}{2b_{2}B(r)}%
\neq\frac{a_{1}}{b_{2}B(r)}=\frac{\pi(r)_{2}}{\pi(r)_{1}}.
\]
Thus $\{\bar{\Gamma}(r)\}_{r\in\mathcal{S}}$ is not Gibbs. }
\end{example}

\subsection{Systems with nearest neighbor jumps}

\label{sec:nearnbr}

The nearest neighbor model in Example \ref{gennnb} imposed certain symmetry
conditions (see \eqref{eq:nnbcond}) on the rate parameters. In the following
example we consider a more general family of near neighbor models with certain
monotonicity conditions on the rates.

\begin{example}
\label{ex-3state} \emph{Let ${\mathcal{X}}=\{1,\ldots,d\}$ and for
$r\in\mathcal{S}$, suppose there exist `cost' vectors $c^{i}\in\mathbb{R}^{d}%
$, $i=1,\ldots,d-1$, and continuous functions $a^{i}:\mathbb{R}%
\rightarrow(0,\infty)$ and $b^{i}:\mathbb{R}\rightarrow(0,\infty)$,
$i=1,\ldots,d,$ such that for every $r\in{\mathcal{S}}$, $\Gamma(r)$ is the
rate matrix of a birth-death chain that satisfies }
\begin{align*}
\Gamma_{i,i+1}(r)  &  =a^{i}(\langle r,c^{i}\rangle),\;\; & i  &
=1,2,\ldots,d-1,\\
\Gamma_{i+1,i}(r)  &  =b^{i+1}(\langle r,c^{i}\rangle),\;\; & i  &
=1,2,\ldots,d-1,\\
\Gamma_{i,j}(r)  &  =0,\;\; & \mbox{{\em for all other} }i  &  \neq j,
\end{align*}
\emph{and, as usual, set $\Gamma_{ii}(r)=-\sum_{i,j,j\neq i}\Gamma_{ij}(r)$,
so that $\Gamma(r)$ is a rate matrix. Let $\pi(r)$ denote the stationary
distribution of the chain with rate matrix $\Gamma(r)$. Since $\pi
(r)\Gamma(r)=0$, we have for $i=1,\ldots,d-1$, }
\begin{equation}
\frac{\pi(r)_{i+1}}{\pi(r)_{i}}=\frac{b^{i+1}(\langle r,c^{i}\rangle
)}{a^{i}(\langle r,c^{i}\rangle)}=\psi^{i}(\langle r,c^{i}\rangle),
\label{def-psi}%
\end{equation}
\emph{where $\psi^{i}(u)\doteq b^{i+1}(u)/a^{i}(u)$, $u\in\mathbb{R}$. }

\emph{Consider the specific case when the cost vectors have the form
$c^{j}=\sum_{k=j+1}^{d}e_{k}$, $j=1,\ldots,d-1$. Then for $i,j=1,\ldots,d-1$,
}
\begin{equation}
c_{i+1}^{j}-c_{i}^{j}=\left\{
\begin{array}
[c]{ll}%
1 & \mbox{ i = j, }\\
0 & \mbox{ {\em otherwise.} }
\end{array}
\right.  \label{cost-rel}%
\end{equation}
\emph{ Then we claim that $\{\Gamma(r)\}_{r\in{\mathcal{S}}}$ is a locally
Gibbs family with potential }
\[
U(r)=-\sum_{j=1}^{d-1}\int_{0}^{\langle r,c^{j}\rangle}\log\left(  \psi
^{j}(w)\right)  \,dw.
\]
\emph{ Indeed, for every $r\in{\mathcal{S}}$ and $i=1,\ldots,d-1$, using
\eqref{cost-rel} }
\[
-D_{e_{i+1}-e_{i}}U(r)=\sum_{j=1}^{d-1}\log\left(  \psi^{j}(\langle
r,c^{j}\rangle)\right)  (c_{i+1}^{j}-c_{i}^{j})=\log\left(  \psi^{i}(\langle
r,c^{i}\rangle)\right)  .
\]
\emph{Together with \eqref{def-psi} this shows that condition
\eqref{cond-lGibbs} is satisfied, and thus $\{\Gamma(r)\}_{r\in{\mathcal{S}}}$
is locally Gibbs. }
\end{example}

\subsection{Models with simultaneous jumps}

\label{subs-lGibbs2}

Weakly interacting particles systems with \textquotedblleft simultaneous
jumps\textquotedblright\ are described in \cite{AntFriRobTib08,DupRamWu12}. For our purposes
here we need only know that the nonlinear Markov process associated with such
models can also be interpreted as the limit process for an ordinary single
jump process, with an effective rate matrix that is defined in terms of
various rate matrices used in the definition of the original process. We
describe one model with simultaneous jumps that arises naturally in
telecommunications and which was studied in \cite{AntFriRobTib08}, and show
that the associated family of effective rate matrices is locally Gibbs.


In this model there are $N$ nodes, each with capacity $C\in\mathbb{N}$, and
there are $M\in\mathbb{N}$ classes, each with parameters $\lambda_{m},\mu_{m}%
$, $\gamma_{m}>0$, and, in addition, a capacity requirement $A_{m}%
\in\mathbb{N}$. The state of node $i$ is the number of calls of each class
present at that node in an $N$-node network, and thus the state space takes
the form
\[
{\mathcal{X}}=\left\{  x\in\mathbb{Z}_{+}^{M}:\sum_{m=1}^{M}x_{m}A_{m}\leq
C\right\}  .
\]
Let $a_{m}(r)$ denote the average number of customers in class $m$ under the
distribution $r$:
\begin{equation}
a_{m}(r)\doteq\left(  \sum_{x\in{\mathcal{X}}}r_{x}x_{m}\right)  .
\label{am-mean}%
\end{equation}
It was shown in Theorem 1 of \cite{AntFriRobTib08} that the associated
sequence of empirical measures satisfies $\mu^{N}(\cdot)\Rightarrow p(\cdot)$,
as $N\rightarrow\infty$, where $p(\cdot)$ satisfies the ODE
(\ref{EqLimitKolmogorov}), with $\Gamma$ taking the following form: for
$r\in{\mathcal{S}}$ and $x,y\in{\mathcal{X}}$, $x\neq y$,
\begin{equation}
\Gamma_{x,y}(r)=\left\{
\begin{array}
[c]{ll}%
\lambda_{m}+\gamma_{m}a_{m}(r) & \mbox{ if }y=x+f_{m}\in{\mathcal{X}%
},m=1,\ldots,M,\\
x_{m}(\mu_{m}+\gamma_{m}) & \mbox{ if }y=x-f_{m}\in{\mathcal{X}}%
,m=1,\ldots,M,\\
0 & \mbox{ otherwise, }
\end{array}
\right.  \label{ant-gammaeff}%
\end{equation}
and, as usual, $\Gamma_{xx}(r)=-\sum_{y\in{\mathcal{X}},y\neq x}\Gamma
_{yx}(r)$. Moreover, it is easily verified (see Proposition 1 of
\cite{AntFriRobTib08}) that for each $r\in{\mathcal{S}}$, $\Gamma(r)$ is the
rate matrix of an ergodic Markov chain with stationary distribution $\pi(r)$
given by
\[
\pi(r)_{x}=\frac{1}{Z(r)}\frac{\prod_{m=1}^{M}\left(  \rho_{m}(r)\right)
^{x_{m}}}{\prod_{m=1}^{M}x_{m}!},\quad x=(x_{1},\ldots,x_{M})\in{\mathcal{X}%
},
\]
where $Z(r)$ is the normalization constant and $\rho:{\mathcal{S}}%
\rightarrow(0,\infty)^{M}$ is given by
\[
\rho_{m}(r)=\frac{\lambda_{m}+\gamma_{m}a_{m}(r)}{\mu_{m}+\gamma_{m}},\quad
m=1,\ldots,M.
\]
It was shown in \cite{AntFriRobTib08} that when $M=1$ there is a unique fixed
point for the ODE (\ref{EqLimitKolmogorov}), but when $M=2$, $A_{1}=1$ and
$A_{2}=C$ for $C$ sufficiently large, there exist parameters $\lambda_{m}%
,\mu_{m}$ and $\gamma_{m}$ for which (\ref{EqLimitKolmogorov}) has multiple
fixed points.

\begin{lemma}
\label{lem-ant} The family of rate matrices $\{\Gamma(r)\}_{r\in{\mathcal{S}}%
}$ defined in (\ref{ant-gammaeff}) is locally Gibbs with potential
\[
U(r)=\sum_{m=1}^{M}\left[  \sum_{x\in\mathcal{X}}\left[  r_{x}\log (x_{m}!)-x_{m}r_{x}\log(\lambda_{m}+\mu_{m})\right]  +\int_{0}^{a_{m}(r)}%
\log\left(  \lambda_{m}+\gamma_{m}w\right)  \,dw\right]  .
\]

\end{lemma}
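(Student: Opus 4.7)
The plan is to verify the two parts of Definition \ref{def-lGibbs}. Part (a) — that each $\Gamma(r)$ is the rate matrix of an ergodic Markov chain with stationary distribution $\pi(r)$ — is precisely Proposition 1 of \cite{AntFriRobTib08}, already recalled in the text preceding the lemma, so no new work is needed. The task reduces to part (b): show that $U$ is $\mathcal{C}^1$ on $\mathcal{S}$ and that \eqref{cond-lGibbs} holds for every $x \neq y$ in $\mathcal{X}$.

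Regularity of $U$ is immediate: the first two summands defining $U$ are linear in $r$, and for each $m$ the integral $\int_0^{a_m(r)}\log(\lambda_m + \gamma_m w)\,dw$ is $\mathcal{C}^\infty$ in $r$, since $a_m(r) = \sum_z z_m r_z$ is linear in $r$ and the integrand is smooth in $w$. For the ratio identity, I plan to prove the sharper assertion that for each fixed $r$ the map $x \mapsto \log \pi(r)_x + \partial U(r)/\partial r_x$ is constant on $\mathcal{X}$. This yields \eqref{cond-lGibbs} immediately by subtraction:
\[
\log\frac{\pi(r)_y}{\pi(r)_x} \,=\, -\Bigl(\tfrac{\partial U}{\partial r_y}(r) - \tfrac{\partial U}{\partial r_x}(r)\Bigr) \,=\, -\langle DU(r), e_y - e_x\rangle.
\]

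The sharper assertion is proved by direct term-by-term differentiation of $U$. Using the fundamental theorem of calculus together with $\partial a_m(r)/\partial r_z = z_m$, one obtains an expression of the form $\partial U/\partial r_z = \sum_m \log(z_m!) + \sum_m z_m\,\psi_m(r)$ for certain $r$-dependent, $z$-independent functions $\psi_m$. On the other hand, the displayed product formula for $\pi(r)$, together with $\rho_m(r) = (\lambda_m + \gamma_m a_m(r))/(\mu_m + \gamma_m)$, gives
\[
\log \pi(r)_x \,=\, -\log Z(r) - \sum_m \log(x_m!) + \sum_m x_m \log \rho_m(r).
\]
Adding the two expressions, the $\log(x_m!)$ contributions cancel and the coefficient of each $x_m$ becomes $\psi_m(r) + \log \rho_m(r)$; verifying that each such coefficient vanishes identically in $r$ collapses the sum to $-\log Z(r)$, independent of $x$, as required. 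I do not anticipate a substantive obstacle: the argument is algebraic bookkeeping, the one conceptual point being that the integral antiderivative in $U$ is exactly what is needed to absorb the implicit $a_m(r)$-dependence of $\pi(r)$ under differentiation in $r$, while the remaining linear-in-$r$ summands handle the $\log(x_m!)$ and the mean-field $\log(\mu_m+\gamma_m)$ contributions. (The only delicate point worth checking in the algebra is that the signs and constants in the linear terms match $\log\rho_m(r)$ precisely; this is mechanical once the bookkeeping is written out.)
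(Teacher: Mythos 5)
Your reduction of Definition~\ref{def-lGibbs}(b) to the constancy in $z$ of $\log\pi(r)_z+\partial U(r)/\partial r_z$ is correct, and is in substance the same argument as the paper's (which directly compares $\log\bigl(\pi(r)_x/\pi(r)_y\bigr)$ with $D_{e_y-e_x}U(r)$; the two are equivalent). The step you set aside as ``mechanical,'' however, is exactly where the argument breaks with $U$ as written. Carrying out the differentiation, the coefficient $\psi_m(r)$ of $z_m$ in $\partial U/\partial r_z$ is $\log\bigl(\lambda_m+\gamma_m a_m(r)\bigr)-\log(\lambda_m+\mu_m)$, while $\log\rho_m(r)=\log\bigl(\lambda_m+\gamma_m a_m(r)\bigr)-\log(\mu_m+\gamma_m)$, so the coefficient of $x_m$ in $\log\pi(r)_x+\partial U/\partial r_x$ is
\[
\psi_m(r)+\log\rho_m(r)=2\log\bigl(\lambda_m+\gamma_m a_m(r)\bigr)-\log(\mu_m+\gamma_m)-\log(\lambda_m+\mu_m),
\]
which is \emph{not} identically zero in $r$. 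The argument requires $\psi_m(r)=-\log\rho_m(r)$, which forces the potential to read
\[
U(r)=\sum_{m=1}^{M}\left[\sum_{x\in\mathcal{X}}r_x\log(x_m!)+\sum_{x\in\mathcal{X}}x_m r_x\log(\mu_m+\gamma_m)-\int_0^{a_m(r)}\log\bigl(\lambda_m+\gamma_m w\bigr)\,dw\right],
\]
i.e.\ the sign on the integral must be flipped and $-\log(\lambda_m+\mu_m)$ replaced by $+\log(\mu_m+\gamma_m)$. Note that the paper's own proof likewise does not write out the final comparison (it asserts that $-\log\bigl(\pi(r)_x/\pi(r)_y\bigr)$ ``is easily seen to coincide with $D_{e_y-e_x}U(r)$''), and the same discrepancy hides there. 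Your strategy is sound and you correctly flagged the sign check as the delicate point; the lesson is that you should actually perform it, since doing so here surfaces a typo in the stated potential.
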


\begin{proof}
The function $U$ is clearly ${\mathcal{C}}^{1}$ on ${\mathcal{S}}$. For $r
\in{\mathcal{S}}$ and $x = (x_{1}, \ldots, x_{M}), y = (y_{1}, \ldots, y_{M})
\in{\mathcal{X}}$, we have
\begin{align*}
- \log\left(  \frac{\pi(r)_{x}}{\pi(r)_{y}} \right)   &  = \sum_{m=1}^{M}
\left[  (y_{m}-x_{m}) \log\left(  \rho_{m}( r)\right)  +\log(x_{m}!) -
\log(y_{m}!) \right] \\
&  = \sum_{m=1}^{M} \left[  (y_{m}- x_{m}) \log\left(  \lambda_{m} +
\gamma_{m} a_{m} (r) \right)  - (y_{m}-x_{m}) \log(\mu_{m}+ \gamma_{m})
\right. \\
&  \left.  \qquad\qquad+ \log(x_{m}!) - \log(y_{m}!) \right]  ,
\end{align*}
which is easily seen to coincide with $D_{e_{y} - e_{x}} U(r)$, thus
establishing that (\ref{cond-lGibbs}) is satisfied.
\end{proof}

\medskip By Theorem \ref{lem-lGibbs0}, it then follows under positive
definiteness that $J(r)=U(r)+\sum_{x\in{\mathcal{X}}}r_{x}\log r_{x}$ is a
local Lyapunov function for (\ref{EqLimitKolmogorov}). Using the definition
$a_{m}(r)=\sum_{x\in{\mathcal{X}}}x_{m}r_{x}$, it is easily seen that $J$
coincides with the Lyapunov function $g$ constructed in Proposition 4 of
\cite{AntFriRobTib08}.

\begin{remark}
\emph{Features of the last example are that the state space ${\mathcal{X}%
}\subset\mathbb{R}^{M}$ and the rate matrix depends on $r$ only through the
mean values $a_{m}(r),m=1,\ldots,M$. The example can be generalized slightly.
Indeed, consider a family of ergodic rate matrices $\{\Gamma(r)\}_{r\in
{\mathcal{S}}}$ on ${\mathcal{X}}\subset\mathbb{R}^{M}$, with the property
that for each $r\in{\mathcal{S}}$, $\Gamma(r)$ has a stationary distribution
$\pi(r)$ of the form }
\[
\pi(r)_{x}=\prod_{m=1}^{M}\left[  \Phi^{(m)}(a_{m}(r))\right]  ^{x_{m}}%
\exp(-H(x,r)),\quad x\in{\mathcal{X}},r\in{\mathcal{S}},
\]
\emph{where $H$ is the function defined in \eqref{def-H} for some
$K:{\mathcal{X}}\times\mathbb{R}^{d}\rightarrow\mathbb{R}$, $a_{m}$ is defined
by \eqref{am-mean} and for each $m=1,\ldots,M$, $\Phi^{(m)}:\mathbb{R}%
\rightarrow(0,\infty)$ is continuous. Then, using arguments exactly analogous
to those used previously in this section, one can show that $\{\Gamma
(r)\}_{r\in{\mathcal{S}}}$ is locally Gibbs with potential }
\[
U(r)=\sum_{z\in{\mathcal{X}}}\left[  \sum_{m=1}^{M}\int_{0}^{a_{m}(r)}%
\log\Phi^{(m)}(w)\,dw+K(z,r)r_{z}\right]  .
\]
\emph{The last example presented then coincides with the case $\Phi
_{m}(w)=\lambda_{m}+\gamma_{m}w$, $w\in\mathbb{R}$, and for $r\in{\mathcal{S}%
}$
\[
K(x,r)=H(x,r)=\sum_{m=1}^{M}\left(  x_{m}\log(\mu_{m}+\gamma_{m})+\log
(x_{m}!)\right)  .
\]
}
\end{remark}

\begin{remark}
\emph{ Another example of a model with simultaneous jumps is the model of
alternative routing in loss networks introduced in Gibbens, Hunt and Kelly
\cite{GibHunKel90}. It can be shown that the family of jump matrices
$\{\Gamma(r)\}_{r\in{\mathcal{S}}}$ associated with this model is not locally
Gibbs with any ${\mathcal{C}}^{2}$ potential $U$. This may explain why this
problem has withstood analysis for more than a decade. It is an interesting
open problem to see if the PDE characterization introduced here can be used to
construct Lyapunov functions for this model and related ones. }
\end{remark}

\subsection{A candidate Lyapunov function for a model that is not locally
Gibbs}

\label{subs-nlgibbs}

The example in this section demonstrates that the class of models for which
explicit non-zero solutions of \eqref{eq:statpde} can be found is larger than
that of locally Gibbs models. Let $d=3$, and for $r\in\mathcal{S}$ define the
rate matrix $\Gamma(r)$ by%
\[
\Gamma(r)=\left(
\begin{array}
[c]{ccc}%
-a_{1}(r) & a_{1}(r) & 0\\
b_{2}(r) & -(a_{2}(r)+b_{2}(r)) & a_{2}(r)\\
0 & b_{3}(r) & -b_{3}(r)
\end{array}
\right)  ,
\]
where $a_{1}$ and $a_{2}$ are measurable functions from $\mathcal{S}$ to
$(0,1)$ and $b_{2}$, $b_{3}$ are given as follows.
Let $\psi:[0,1]\rightarrow(0,1)$ be a continuous function that is bounded away
from $0$. We set
\[
b_{2}(r)=\left(  1+(r_{2}-r_{3}\psi(r_{3}))a_{2}(r)\right)  a_{1}%
(r),\;b_{3}(r)=\psi(r_{3})a_{2}(r)\left(  1+(r_{2}-r_{1})a_{1}(r)\right)  .
\]
Note that for each $r\in\mathcal{S}$, $\Gamma(r)$ is an ergodic rate matrix
and the corresponding unique invariant measure $\pi(r)$ satisfies
\begin{align*}
\frac{\pi(r)_{1}}{\pi(r)_{2}}=  &  \frac{b_{2}(r)}{a_{1}(r)}=\left(
1+(r_{2}-r_{3}\psi(r_{3}))a_{2}(r)\right)  ,\\
\frac{\pi(r)_{2}}{\pi(r)_{3}}=  &  \frac{b_{3}(r)}{a_{2}(r)}=\psi
(r_{3})\left(  1+(r_{2}-r_{1})a_{1}(r)\right)  .
\end{align*}
Since $a_{1},a_{2}$ are arbitrary functions, there may be no $\mathcal{C}^1$ function $U$ for which equation \eqref{cond-lGibbs}
is satisfied, and so the family $\{\Gamma
(r)\}_{r\in{\mathcal{S}}}$ is not locally Gibbs in general.

Define
\[
U(r)\doteq\int_{0}^{r_{3}}\log\psi(x)dx,\;\;r\in\mathcal{S},
\]
and let $J$ be defined through \eqref{def-IlGibbs}. Then, as shown below, $J$
satisfies the PDE (\ref{eq:statpde}) on $\mathcal{S}^{\circ}$ and hence is a
candidate Lyapunov function. Indeed, note that
\[
D_{e_{y}-e_{x}}J(r)=\left\{
\begin{array}
[c]{cc}%
\log(\frac{r_{2}}{r_{1}}) & \mbox{ if }(y,x)=(2,1),\\
\  & \\
\log(\frac{r_{3}\psi(r_{3})}{r_{1}}) & \mbox{ if }(y,x)=(3,1),\\
\  & \\
\log(\frac{r_{3}\psi(r_{3})}{r_{2}}) & \mbox{ if }(y,x)=(3,2).
\end{array}
\right.
\]
Thus,
\begin{align*}
-\boldsymbol{H}(r,-DJ(r))  &  =(r_{2}-r_{1})a_{1}(r)+(r_{3}\psi(r_{3}%
)-r_{2})a_{2}(r)+(r_{1}-r_{2})b_{2}(r)\\
&  \quad+\frac{(r_{2}-r_{3}\psi(r_{3}))}{\psi(r_{3})}b_{3}(r)\\
&  =(r_{2}-r_{1})a_{1}(r)+(r_{3}\psi(r_{3})-r_{2})a_{2}(r)\\
&  \quad+(r_{1}-r_{2})\left(  1+(r_{2}-r_{3}\psi(r_{3}))a_{2}(r)\right)
a_{1}(r)\\
&  \quad+\frac{(r_{2}-r_{3}\psi(r_{3}))}{\psi(r_{3})}\psi(r_{3})a_{2}%
(r)\left(  1+(r_{2}-r_{1})a_{1}(r)\right) \\
&  =0.
\end{align*}

\noindent\textbf{Acknowledgments. } We would like to thank Vaios Laschos for
bringing the paper \cite{BodLebMouVil13} to our attention. \newline

\end{document}